\DeclareMathAlphabet{\pazocal}{OMS}{zplm}{m}{n}
\newcommand{\C}{\mathbb{C}}
\newcommand{\F}{\mathbb{F}}
\newcommand{\G}{\mathbb{G}}
\newcommand{\N}{\mathbb{N}}
\renewcommand{\P}{\mathbb{P}}
\newcommand{\Q}{\mathbb{Q}}
\newcommand{\R}{\mathbb{R}}
\newcommand{\Z}{\mathbb{Z}}
\newcommand{\pB}{\pazocal{B}}
\newcommand{\pH}{\pazocal{H}}
\newcommand{\pL}{\pazocal{L}}
\newcommand{\pQ}{\pazocal{Q}}
\newcommand{\pR}{\pazocal{R}}
\newcommand{\pZ}{\pazocal{Z}}
\newcommand{\sL}{\mathsf{L}}
\newcommand{\sQ}{\mathsf{Q}}
\newcommand{\sU}{\mathsf{U}}
\newcommand{\sa}{\mathsf{a}}
\newcommand{\sfp}{\mathsf{p}}
\newcommand{\sq}{\mathsf{q}}
\newcommand{\su}{\mathsf{u}}
\newcommand{\sv}{\mathsf{v}}
\newcommand{\sw}{\mathsf{w}}
\newcommand{\cl}{\mathsf{cl}}
\newcommand{\conv}{\mathsf{conv}}
\newcommand{\CCt}{\C(\!(t)\!)}
\newcommand{\ETrop}{\mathfrak{Trop}}
\newcommand{\Fl}{\mathsf{Fl}}
\newcommand{\Gr}{\mathsf{Gr}}
\newcommand{\init}{\mathsf{in}}
\newcommand{\PGL}{\mathsf{PGL}}
\newcommand{\pr}{\mathsf{pr}}
\newcommand{\sing}{\mathsf{sing}}
\newcommand{\sgn}{\mathsf{sgn}}
\newcommand{\smgp}{\mathsf{smgp}}
\newcommand{\Spec}{\mathsf{Spec}}
\newcommand{\TGr}{\mathsf{TGr}}
\newcommand{\trop}{\mathsf{trop}}
\newcommand{\Trop}{\mathsf{Trop}}
\newcommand{\val}{\mathsf{val}}
\newcommand{\red}{\mathsf{red}}
\newcommand{\Zone}{\langle \mathbf{1}\rangle}
\newcommand{\bk}[2]{\langle #1, #2 \rangle}
\newcommand{\Leaf}[1]{\mathsf{L}(#1)}
\newcommand{\pe}[3]{#1\, +_{#2} \, #3}
\newcommand{\Sn}[1]{\mathfrak{S}_{#1}}
\newtheorem{theorem}{Theorem}[section]
\newtheorem{lemma}[theorem]{Lemma}
\newtheorem{proposition}[theorem]{Proposition}
\newtheorem{corollary}[theorem]{Corollary}
\newtheorem*{theorem*}{Theorem}
\theoremstyle{definition}
\newtheorem{question}[theorem]{Question}
\newtheorem{example}[theorem]{Example}
\theoremstyle{remark}
\newtheorem{remark}[theorem]{Remark}
\numberwithin{equation}{section}
\numberwithin{table}{section}
\numberwithin{figure}{section}
\title{Singular matroid realization spaces}
\date{\today}
\author{Daniel Corey}
\email{daniel.corey@unlv.edu}
\address{University of Nevada, Las Vegas, 
4505 S. Maryland Pkwy., 
Las Vegas, NV 89154}
\author{Dante Luber}
\email{luber@math.uni-frankfurt.de}
\address{Goethe Universit\"at, Frankfurt Am Main, Robert-Mayer-Str. 6-8 D-60325 Frankfurt am Main Germany.}
\subjclass{14B05 (primary) 05E14, 14T90, 52B40 (secondary)}
\begin{document}

\begin{abstract}
    We study smoothness of realization spaces of matroids for small rank and ground set. For $\C$-realizable matroids, when the rank is  $3$, we prove that the realization spaces are all smooth when the ground set has  $11$ or fewer elements, and there are singular realization spaces for $12$ and greater elements.  For rank $4$ and $9$ or fewer elements, we prove that these realization spaces are smooth. As an application, we prove that $\Gr^{\circ}(d,n;\C)$---the locus of the Grassmannian where all Pl\"ucker coordinates are nonzero---is not sch\"on for all but finitely many pairs $(d,n)$ such that $3\leq d \leq n-3$. 
\end{abstract}

\maketitle

\section{Introduction}\label{sec:intro}

Matroid realization spaces---algebraic varieties, or more generally schemes, that parameterize hyperplane arrangements realizing a fixed matroid---provide a rich source of singular spaces in algebraic geometry, at least at an abstract level. By Mn\"ev's universality theorem \cite{Mnev85, Mnev88}, for each singularity type (a notion made precise by Vakil in \cite{Vakil}) there is a rank $3$ matroid $\sQ$ such that this singularity type appears in the realization space of $\sQ$. This theorem serves as a template proving that many moduli spaces satisfy \textit{Murphy's law}, i.e., that every singularity type appears on that moduli space.

While the proofs of Mn\"{e}v's universality theorem are constructive (see \cite[Theorem~5.3]{Cartwright},  \cite[I.14]{Lafforgue2003}, or \cite{VakilLee}) the size of the ground set becomes large even for the simplest singularities. Thus, the primary goal of this paper is to find the smallest $n$ such that there is a rank $3$ matroid on $n$ elements whose realization space is singular over $\C$. 

In earlier work \cite{CoreyGrassmannians,CoreyLuber}, the authors prove that the realization spaces of $\C$--realizable rank $3$ matroids on $8$ or fewer elements are all smooth (which is closely related to earlier connectivity results in \cite{GalletSaini, NazirYoshinaga}).  Furthermore, they are irreducible except for the realization space of the M\"{o}bius--Kantor matroid.  This is the matroid associated to the unique arrangement of 8 points and 8 lines in $\P^{2}$ such that each point lies on exactly 3 lines, and each line contains exactly 3 points. We prove that the answer to problem posed in the previous paragraph is  $n=12$. Our main results are summarized below.

\begin{theorem}
\label{thm:intro-3-leq11-smooth}
    The realization spaces of $\C$-realizable rank $3$ matroids on 11 or fewer elements are smooth over $\C$. 
\end{theorem}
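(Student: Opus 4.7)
The case $n \le 8$ is established in \cite{CoreyGrassmannians,CoreyLuber}, so the plan is to handle the remaining cases $n \in \{9,10,11\}$ by an exhaustive analysis of $\C$-realizable simple rank $3$ matroids. First I would reduce to simple matroids, since the realization space of any rank $3$ matroid is an affine-space bundle over the realization space of its simplification. Next I would perform iterative "easy deletions": any element that lies on at most one non-trivial line (a line of the matroid with three or more elements) can be removed, because reintroducing it contributes only a smooth open-subset factor (of $\P^2$ or $\P^1$, respectively) to the realization space. This reduces the question to a much shorter list of \emph{core} matroids in which every element is forced onto at least two non-trivial lines.

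The next step is to invoke a catalogue of isomorphism classes of simple rank $3$ matroids on $n \le 11$ elements, filter to those that are $\C$-realizable (using standard realizability obstructions, and solving small instances directly where necessary), and restrict further to the core matroids produced by the reductions above. For each such $\sQ$ I would attempt to exhibit an explicit smooth parameterization of $\pR(\sQ)$ via a \emph{building sequence}: fix a projective frame in $\P^2$ using the $\PGL_3$-action, and then introduce the remaining points one at a time, each as the intersection of two previously constructed lines, as a free point on a previously constructed line, or as a free point in $\P^2$. When such a sequence exists, $\pR(\sQ)$ is realized as an iterated locally trivial fibration over a point with smooth fibers, and smoothness is automatic.

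The matroids that resist every building sequence form the heart of the argument. I expect these to be a short list of "combinatorially rigid" configurations closely related to the M\"obius--Kantor matroid, and for each I would compute defining equations of $\pR(\sQ)$ in affine coordinates (after gauge-fixing the $\PGL_3$-action) and verify smoothness via the Jacobian criterion, with computer-algebra assistance if needed; when $\pR(\sQ)$ is reducible, this would have to be done component by component after a primary decomposition. The main obstacle I anticipate is twofold: ensuring completeness of the enumeration of $\C$-realizable rank $3$ matroids on $11$ elements (since realizability itself amounts to a system of polynomial equations), and dispatching the handful of non-constructible cases without a large brute-force Jacobian calculation. The conceptual punchline is that no rank $3$ matroid on $\le 11$ elements encodes a polynomial system rich enough to produce a Mn\"ev-type singularity, a phenomenon which will first appear at $n = 12$.
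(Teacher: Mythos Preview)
Your strategy---enumerate simple $(3,n)$-matroids from a database, reduce by deleting elements that lie on few lines, then handle the residual cases by computing coordinate rings and applying the Jacobian criterion---is essentially the paper's. Two corrections are needed. First, your expectation of a ``short list'' of residual configurations ``closely related to the M\"obius--Kantor matroid'' is badly miscalibrated. The paper uses a stronger reduction than your step~2: by Proposition~\ref{prop:2planes} one may delete any element lying on at most \emph{two} lines while preserving smoothness, so the residual matroids are exactly those with the \emph{3 lines property} (every element on $\geq 3$ lines). Your building-sequence step would recover the intersection-of-two-lines case and hence roughly the same reduction, but even after this filter there remain $11\,516$ $\C$-realizable $(3,11)$-matroids (plus $107$ for $n=10$), and the paper dispatches every one of them by a uniform computer routine: eliminate variables via \cite[Algorithm~6.12]{CoreyLuber} until the ideal becomes principal, then apply the Jacobian criterion to that single generator. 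There is no further combinatorial shortcut, and no small hand-checkable list.

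Second, your plan to verify smoothness of reducible realization spaces ``component by component after a primary decomposition'' is a genuine error. A scheme whose irreducible components are individually smooth is singular wherever two of them meet; indeed this is exactly how the singular $(3,12)$ example $\sQ_{\sing}$ in Theorem~\ref{thm:singular3-12} arises (three smooth curves meeting at two nodes). Checking each primary component separately would not distinguish that situation from the genuinely smooth reducible examples in Table~\ref{tab:39}. You must apply the Jacobian criterion to the full defining ideal and verify that the resulting Jacobian ideal saturates to the unit ideal in the localized ring, which is what the paper does.
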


\begin{theorem}
\label{thm:intro-3-geq12-singular}
    For $n\geq 12$, there exists a rank $3$ matroid on $n$ elements whose realization space has nodal singularities over $\C$.     
\end{theorem}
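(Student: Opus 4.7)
The approach is constructive: first exhibit an explicit rank $3$ matroid $\sQ_{12}$ on $12$ elements whose realization space $\pR(\sQ_{12})$ carries a nodal singularity at some $\C$-point, and then propagate this singularity to all larger ground sets by adjoining elements in sufficiently general position.

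For the base case $n = 12$, the plan is to select a matroid $\sQ_{12}$ whose list of collinear triples is designed along the lines of a minimal von~Staudt-type construction, chosen so that after rigidifying the natural $\PGL_3$-action (and the projective scaling of homogeneous coordinates at each point) by normalizing four points to a standard projective frame, the remaining collinearity relations cut out $\pR(\sQ_{12})$ as a closed subscheme of an affine space $\mathbb{A}^N$. Theorem~\ref{thm:intro-3-leq11-smooth} already tells us that such a matroid cannot exist on $11$ or fewer elements, so $\sQ_{12}$ must use all its points in an essential way. The target outcome is a realization $x_0 \in \pR(\sQ_{12})$ whose completed local ring is of the form $\C[[v_1, \ldots, v_{d+1}]]/(f)$ with $f$ of order $2$ and non-degenerate quadratic term, certifying an ordinary double point.

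The hardest step is the explicit construction and the nodality verification. It is not a priori clear which $12$-element matroid realizes the smallest example, and verifying nodality amounts to (i) computing the defining ideal of $\pR(\sQ_{12})$ near a candidate realization, (ii) locating the singular locus via the Jacobian criterion, and (iii) checking that the quadratic part of the local defining equation is nondegenerate. I would guide the search by looking at configurations produced by iteratively imposing collinearities that are only marginally overdetermined (so the Jacobian just barely fails to be surjective at the critical realization), and by exploiting small-ground-set classifications of rank $3$ matroids; candidate families near the M\"obius--Kantor matroid from \cite{CoreyLuber} are a natural starting point.

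For $n > 12$, I would take $\sQ_n$ to be the rank $3$ matroid obtained from $\sQ_{12}$ by appending $n - 12$ new elements as \emph{free} points, i.e.\ declaring that no new dependent triple involves any new element. The forgetful morphism $\pR(\sQ_n) \to \pR(\sQ_{12})$ is then a Zariski-locally trivial fibration whose fibre over a realization is the complement in $(\P^2)^{n-12}$ of the finite arrangement of lines spanned by the $12$ base points; in particular it is smooth and irreducible. Because ordinary double points are preserved under smooth base change (and smoothly-locally under products with a smooth variety), the nodal singularity of $\pR(\sQ_{12})$ at $x_0$ lifts to a nodal singularity of $\pR(\sQ_n)$, completing the theorem.
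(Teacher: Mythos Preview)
Your two-step strategy---explicit singular base case at $n=12$ followed by free extension to larger $n$---is exactly the paper's approach (Theorem~\ref{thm:singular3-12} for the base case, Proposition~\ref{prop:principalExtension} and Theorem~\ref{thm:singular3-n} for the propagation). The free-extension argument you give is essentially correct and matches the paper, though one small inaccuracy: the forgetful morphism $\pR(\sQ_n)\to\pR(\sQ_{12})$ is smooth and surjective with connected fibers, but is not in general Zariski-locally trivial, since the arrangement to be deleted from $\P^2$ varies with the base point. Smoothness is all you need, so this does not affect the conclusion.

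The genuine gap is in step (1): you describe a search heuristic (von~Staudt-type constructions, configurations near the M\"obius--Kantor matroid) but never produce the matroid $\sQ_{12}$. Since the entire content of the theorem is the existence of this object, the proposal is a plan rather than a proof. The paper's $\sQ_{\sing}$ was found by computer search in \texttt{OSCAR} and is given by the explicit list of lines in~\eqref{eq:singularMatroid}; it does not arise from the M\"obius--Kantor family in any evident way, so your proposed heuristic may not have led you to it. Moreover, the paper's verification of nodality is simpler than the local Hessian computation you sketch: after reducing to two coordinates, $\pR(\sQ_{\sing};\C)$ is an open subset of the affine plane curve $V\bigl((xy+x-2y)(y^2-y+1)\bigr)$, visibly a union of three smooth components meeting pairwise transversely at two points, so the nodes are immediate once one checks those intersection points survive the semigroup localization.
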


\noindent 
We also initiate a systematic study of smoothness and irreducibility for rank $4$ matroids. 

\begin{theorem}\label{thm:intro-4-n-matroids-smooth}
    For $n\leq 9$, the realization spaces of $\C$-realizable rank $4$ matroids on $n$ elements are smooth.
\end{theorem}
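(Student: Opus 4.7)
The plan is to handle small or decomposable matroids by reduction to smaller cases, dispatch the low-$n$ rank $4$ matroids by duality to the prior work on rank $3$, and finish the remaining range $n \in \{8, 9\}$ by a finite enumeration together with a computer-algebra verification. I would first observe that it suffices to consider $\sQ$ that is simple (no loops, no parallel elements), coloopless, and not expressible as a direct sum or $2$-sum. Loops, coloops, and parallel classes each contribute a trivial torus or affine factor to the realization space; a direct sum $\sQ_1 \oplus \sQ_2$ yields a product of realization spaces; and a $2$-sum $\sQ_1 \oplus_2 \sQ_2$ along a common element yields a fiber product over the realization space of $U_{1,2}$. In each situation smoothness descends from strictly smaller matroids, so induction on the size of the ground set is in force.

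The second step is matroid duality. The realization space of $\sQ$ is canonically isomorphic to that of its dual $\sQ^{*}$, via the map sending a $4 \times n$ matrix to a chosen basis of its kernel. A rank $4$ matroid on $n \le 7$ elements thus has the same realization space as a matroid of rank $n - 4 \le 3$ on $n$ elements, and this case is already smooth by \cite{CoreyGrassmannians, CoreyLuber} (the rank $\le 2$ situations being elementary). This reduces the theorem to $n = 8$ (where duality preserves rank but can still cut the enumeration roughly in half) and $n = 9$.

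For the remaining range, the simple, $2$-sum-indecomposable, rank $4$ matroids on $8$ and $9$ elements form a finite list that can be drawn from standard matroid databases. For each such $\sQ$, I would fix a basis $B$, use the $\PGL_4$-action to send $B$ to the standard coordinate flag, and present the residual realization space as a locally closed subscheme of $\mathbb{A}^{3(n-4)}$ cut out by the $4 \times 4$ minor conditions indexed by the bases (nonvanishing) and nonbases (vanishing) of $\sQ$. Smoothness is then a Jacobian check, to be executed in \texttt{OSCAR}. The main obstacle will be the case count at $n = 9$ and the complexity of several of the resulting ideals; I would plan to manage this by choosing $B$ adapted to the automorphism group of $\sQ$, and, where tractable, by giving an explicit rational parametrization of the realization space through iterated elimination using three-term Pl\"{u}cker relations, so that smoothness is read off structurally rather than from a brute-force Gr\"{o}bner basis calculation.
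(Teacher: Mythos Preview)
Your overall strategy matches the paper's: reduce by standard operations, dispatch $n \le 7$ by duality to the known rank-$\le 3$ results, and handle $n \in \{8,9\}$ by enumerating from the Matsumoto--Moriyama--Imai--Bremner database and applying a Jacobian check in \texttt{OSCAR} after an elimination pass. The substantive difference is the reduction filter. In place of your $2$-sum decomposition, the paper proves a new criterion (Proposition~\ref{prop:2planes}): if some element $a$ lies in at most two hyperplanes in which it is not a coloop, then $\Gr(\sQ;\C) \to \Gr(\sQ\setminus a;\C)$ is smooth and surjective, so smoothness descends to a smaller ground set. For rank $4$ with $n\le 9$ this is packaged as the ``$4$ planes property'' (Proposition~\ref{prop:4planes}), which cuts the $(4,9)$ enumeration from roughly $186\,000$ simple connected matroids to about $61\,000$ before the computer pass. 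Your $2$-sum filter is considerably weaker, and the assertion that the realization space of a $2$-sum is a fiber product over that of $U_{1,2}$ is not the standard formulation and would need its own argument (the usual route passes through the parallel connection, and one must still justify that deleting the basepoint preserves smoothness). That said, even dropping the $2$-sum step entirely and running the Jacobian check over all simple connected rank-$4$ matroids on $8$ and $9$ elements remains finite and computationally feasible, so your plan would go through---just with roughly three times as many direct verifications as the paper needs.
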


\noindent There are 13 pairs $(d,n)$ for which we do not know if there are singular matroid realization spaces for $\C$-realizable matroids of rank $d$ on $n$ elements.  See \eqref{eq:openPairs}.

To prove Theorems \ref{thm:intro-3-leq11-smooth} and \ref{thm:intro-4-n-matroids-smooth}, we use a combination of theoretical results and computer computations in \texttt{OSCAR}. We prove structural results that allow us to deduce smoothness of realization spaces for some matroids by knowing smoothness of realization spaces of matroids on smaller ground sets. For example, the singularity types in a realization space are preserved under \textit{principal extension}, i.e., the act of freely adding an element to a flat of a matroid. See Proposition \ref{prop:principalExtension}, as well as  Proposition \ref{prop:2planes} for a stronger criterion. We use in a crucial way the database of matroids developed by \cite{MatsumotoMoriyamaImaiBremner}. There are $500\, 957$ (possibly nonrealizable)  matroids covered by these two theorems, and using our structural results, we need only verify smoothness directly for $50\,945$ of them, about $10\%$ of the total.

Our main application of Theorem \ref{thm:intro-3-geq12-singular} is to find singular initial degenerations of the Grassmannian. Recall that the Grassmannian $\Gr(d,n;\C)$ is the algebraic variety that parameterizes $d$-dimensional linear subspaces of $\C^{n}$. Via the Pl\"ucker embedding, $\Gr(d,n;\C)$ is realized as a closed subvariety of $\P^{\binom{n}{d}-1}$, and its homogeneous coordinates are called \textit{Pl\"ucker coordinates}. The open subvariety $\Gr^{\circ}(d,n;\C)$ defined by the nonvanishing of all Pl\"ucker coordinates is a widely studied object in tropical geometry and moduli theory. Its tropicalization $\TGr^{\circ}(d,n)$ is closely related to phylogenetic trees (for $n=2$), valuated matroids, and toric degenerations \cite{SpeyerSturmfels2004a}.  The space $\Gr^{\circ}(d,n;\C)$ is also connected, via the Gelfand--MacPherson correspondence, to the moduli space $\pR(d,n;\C)$ of $n$ hyperplanes in $\P_{\C}^{d-1}$ in linear general position up to the action of $\PGL_{d}(\C)$ (alternately, $\pR(d,n;\C)$ is the realization space of the uniform matroid).  

In \cite{CoreyGrassmannians,CoreyLuber}, the authors prove that the normalization of the Chow quotient of $\Gr(3,n;\C)$ by the diagonal torus of $\PGL_{n}(\C)$ is the log canonical model of $\pR(3,n;\C)$ for $n\in\{7,8\}$, and the $n=6$ case was earlier proved in \cite{Luxton}. These works fully resolve \cite[Conjecture~1.6]{KeelTevelev2006}. The key step in the proof is to show that $\Gr^{\circ}(3,n;\C)$ is sch\"on for $n\leq 8$. (A closed subvariety of an algebraic torus is \textit{sch\"on} if its initial degenerations are all smooth; we review these notions in \S\ref{sec:initial_degenerations}).  Proving that a variety is sch\"on is, in general, a challenging task. The Grassmannian case relies heavily on a result of the first author \cite[Theorem~1.1]{CoreyGrassmannians} which relates the initial degenerations of $\Gr^{\circ}(3,n;\C)$ to matroid strata of the Grassmannian. Because of this connection and Mn\"ev's universality theorem, one may suspect that $\Gr^{\circ}(d,n;\C)$ is, in general, not sch\"on. Using Theorem \ref{thm:intro-3-geq12-singular}, we confirm this suspicion in the following theorem. 

\begin{theorem}
\label{thm:not-shon-geq-12}
    Let $3\leq d \leq \frac{n}{2}$. The open Grassmannian $\Gr^{\circ}(d,n;\C)$ and moduli space $\pR(d,n)$ are not sch\"on for all but finitely many pairs $(d,n)$. 
\end{theorem}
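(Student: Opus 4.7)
The plan is to reduce the theorem to Theorem \ref{thm:intro-3-geq12-singular} by lifting singular rank-$3$ matroids to higher ranks. For $d = 3$ and $n \geq 12$, Theorem \ref{thm:intro-3-geq12-singular} provides a rank-$3$ matroid $\sQ$ on $[n]$ whose realization space has a nodal singularity. By \cite[Theorem~1.1]{CoreyGrassmannians}, initial degenerations of $\Gr^{\circ}(3, n; \C)$ decompose (up to torus factors) as products of realization spaces of rank-$3$ matroids; hence the initial degeneration attached to $\sQ$ inherits the nodal singularity and $\Gr^{\circ}(3, n; \C)$ is not sch\"on.

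For $d \geq 4$ and $n \geq d + 9$, I would construct a singular rank-$d$ matroid on $[n]$ as follows. Beginning with the rank-$3$ singular matroid $\sQ$ on $[12]$ from Theorem \ref{thm:intro-3-geq12-singular}, apply $n - d - 9$ principal extensions (Proposition \ref{prop:principalExtension}) to obtain a rank-$3$ matroid $\tilde\sQ$ on $[n-d+3]$ whose realization space remains nodal. Then form the direct sum $\sQ_{d} := \tilde\sQ \oplus U_{d-3, d-3}$ with $d-3$ coloops. Since $\pR(U_{d-3, d-3})$ is a point and realization spaces of direct sums factor accordingly, $\pR(\sQ_{d}) \cong \pR(\tilde\sQ)$ still has a nodal singularity. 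Moreover $\sQ_d$ is realizable, so its matroid-polytope cone lies in $\Trop(\Gr^{\circ}(d, n; \C))$, and the initial degeneration along this cone contains $\pR(\sQ_d)$ as a factor up to a torus. Hence $\Gr^{\circ}(d, n; \C)$ is not sch\"on.

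The uncovered pairs $(d, n)$ with $3 \leq d \leq n/2$ satisfy $2d \leq n \leq d + 8$, which forces $d \leq 8$, and so there are only finitely many such pairs. For the moduli space, the Gelfand--MacPherson correspondence realizes $\Gr^{\circ}(d, n; \C)$ as a torus bundle over $\pR(d, n; \C)$, and sch\"onness is preserved in both directions under such quotients (initial degenerations on the two sides differ only by torus factors). This yields the claim for $\pR(d, n; \C)$ as well.

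The main obstacle is the rank-$d \geq 4$ version of the matroid-to-initial-degeneration identification used in the second paragraph. For $d = 3$ this is supplied by the explicit product decomposition of \cite[Theorem~1.1]{CoreyGrassmannians}, which exploits features specific to rank $3$. In higher rank, one must argue directly from the matroid stratification of the tropical Grassmannian that the initial degeneration associated to the cone of $\sQ_d$ genuinely exhibits $\pR(\sQ_d)$ as a factor up to a torus, with the nodal singularity of $\pR(\sQ_d)$ transferring intact rather than being smoothed by another component. Making this precise---carefully tracking the torus contributions and verifying that sch\"onness really fails on the ambient initial degeneration---is where the bulk of the technical work lies.
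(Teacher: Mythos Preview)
Your proposal has a genuine gap already in the $d=3$ case. You claim that \cite[Theorem~1.1]{CoreyGrassmannians} says initial degenerations of $\Gr^{\circ}(3,n;\C)$ ``decompose (up to torus factors) as products of realization spaces.'' That is not what the theorem says: it only gives a \emph{closed immersion}
\[
\psi_{\sw}:\init_{\sw}\Gr^{\circ}(d,n)\hookrightarrow \Gr(\sw)
\]
into an \emph{inverse limit} (not a product) of matroid strata, and this immersion is in general not an isomorphism (cf.\ the discussion around Theorem~\ref{thm:closed_immersion} and \cite[Example~8.2]{CoreyGrassmannians}). So even after you know $\Gr(\sQ;\C)$ is singular and deduce that $\Gr(\sw)$ is singular, the initial degeneration could a priori miss the singular points entirely and be smooth. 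The paper resolves this by (i) choosing $\sw$ to be the corank vector of a \emph{paving} matroid, so that the dual graph of $\pQ(\sw)$ is a star and $\Gr(\sw)\to\Gr(\sQ;\C)$ is smooth and surjective (Propositions~\ref{prop:subdStar} and~\ref{prop:compareLimitToCenter}), and (ii) explicitly constructing $\CCt$-points whose exploded tropicalizations land in each irreducible component of $\Gr(\sw)$, then invoking a pure-dimension argument (Proposition~\ref{prop:codim0}) to conclude that $\psi_{\sw}$ is an isomorphism (Proposition~\ref{prop:singularIsom}). None of this is automatic from \cite[Theorem~1.1]{CoreyGrassmannians}.

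Your higher-rank construction also runs into trouble. Taking $\sQ_d=\tilde\sQ\oplus \sU_{d-3,d-3}$ produces a \emph{disconnected} matroid (it has $d-3$ coloops), so $\Delta(\sQ_d)$ is not full-dimensional in $\Delta(d,n)$ and cannot appear as a maximal cell of any matroidal subdivision of the hypersimplex. There is then no obvious $\sw$ whose initial degeneration is controlled by $\Gr(\sQ_d;\C)$. The paper instead passes from rank $3$ to rank $d$ via $d-3$ \emph{free coextensions} (Proposition~\ref{prop:coextension}) followed by free extensions; this keeps the matroid connected and, crucially, \emph{paving} (Proposition~\ref{prop:pavingCyclicFlats}), so that the corank-vector machinery of \S\ref{sec:pavingMatroids} applies uniformly for all $d\geq 3$. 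Your final paragraph correctly flags the central difficulty, but the specific route you propose (direct sums with coloops, product decompositions) does not get there.
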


\noindent
Since the initial degenerations of $\Gr^{\circ}(d,n;\C)$ and $\pR(d,n)$ differ by a torus factor, sch\"oness of $\Gr^{\circ}(d,n;\C)$ is equivalent to that of $\pR(d,n)$. The restriction of $d\leq \frac{n}{2}$ poses no loss of generality because of the duality isomorphism $\Gr^{\circ}(d,n) \cong \Gr^{\circ}(n-d,n)$, and the restriction $d\geq 3$ is necessary since $\Gr^{\circ}(d,n;\C)$ is sch\"on for $d=1$ and $2$ \cite{Tevelev}. There are 18 pairs $(d,n)$ for which we do not know if $\Gr^{\circ}(d,n)$ is sch\"on. These may be found in \eqref{eq:openPairs}.

\begin{question}\label{question:3-n range}
    For which pairs $(d,n)$ in \eqref{eq:openPairs} is $\Gr^{\circ}(d,n;\C)$ sch\"on? 
\end{question}

\noindent Resolving the above cases in the affirmative could be challenging, as the techniques used in prior work and this paper are already known to fail for $(d,n)=(3,9)$, see \cite[Example 8.2]{CoreyGrassmannians}.

\subsection*{Code}\label{code} We use \texttt{OSCAR} \cite{OSCAR-book,OSCAR} which runs using \texttt{julia} \cite{BezansonEdelmanKarpinskiShah}. Specifically, \texttt{OSCAR} is essential to the proofs of Propositions \ref{prop:3-9smooth}, \ref{prop:3-10-11smooth}, \ref{prop:4-8-smooth}, and \ref{prop:4-9-smooth}. We include supplementary computations useful, although not strictly necessary, for the proofs of Theorem \ref{thm:singular3-12} and Proposition \ref{prop:singularIsom}. The code can be found at the following github repository:

\begin{center}
    \url{https://github.com/dcorey2814/matroidRealizationSpaces}
\end{center}

\subsection*{Notation and conventions}
Given a set $E$, denote by $\binom{E}{d}$ the set of size $d$ subsets of $E$.  We write $[n] =\{1,\dots,n\}$. When $n<10$, we express subsets subsets of $[n]$ by juxtaposing their elements, e.g., we write $ijk$ in place of $\{i,j,k\}$,  $A\cup x$ in place of $A\cup \{x\}$, and $A\setminus x$ in place of $A\setminus \{x\}$. Finite tuples appear frequently when forming minors of a matrix. Let  $\lambda$ and $\mu$ be two tuples. We write $\lambda \setminus \mu$ for the tuple obtained by removing the elements of $\mu\cap \lambda$ from $\lambda$ while preserving the order and $\lambda\cup \mu$ for the tuple obtained by appending $\mu$ to the end of $\lambda$.   

\subsection*{Acknowledgements}
We thank Antony Della Vecchia, Chris Eur, Matteo Gallet, Michael Joswig, Lars Kastner, Lukas K\"{u}hne, Matt Larson, Benjamin Lorenz, and Marcel Wack for helpful discussions and feedback. We also thank the anonymous referee for their valuable feedback and suggestions.  DC is supported by the SFB-TRR project ``Symbolic Tools in Mathematics and their Application'' (project-ID 286237555), and DL is supported by "Facets of Complexity” (GRK 2434, project-ID 385256563).

\section{Background}

In this section we review preliminary material on Grassmannians, matroid strata, and realization spaces. In particular, for a given matroid, we demonstrate how to compute the coordinate rings of the associated strata in the Grassmannian, as well as its realization space.

\subsection{Grassmannian}\label{sec:grassmannian}
Let $\F$ be a field and let $d, n$ be positive integers satisfying $d \leq n$. The Grassmannian $\Gr(d,n;\F)$ parameterizes the $d$-dimensional subspaces of an $n$-dimensional $\F$-vector space.  We may represent $\Gr(d,n;\F)$ as a $d(n-d)$--dimensional projective variety via the Pl\"ucker embedding, defined as follows. Let $V\subset \F^n$ be a $d$-dimensional subspace and $A$ a full-rank $d\times n$ matrix whose row span is $V$. Given a sequence $\lambda = (i_1,\ldots,i_d)$ of elements in $[n]$, denote by $A_{\lambda}$ the determinant of the $d\times d$ matrix formed by the columns of $A$ indexed by $i_1,\ldots,i_d$ in that order. The $\lambda$-th \textit{Pl\"ucker coordinate} of $V$ is $p_{\lambda}(V) = A_{\lambda}$. If $\sigma$ is a permutation of the indices $(i_1,\ldots,i_d)$, then $p_{\sigma(\lambda)}(V) = \sgn(\sigma)p_{\lambda}(V)$ where $\sgn(\sigma)$ is the sign of the permutation $\sigma$; in particular $p_{\lambda}(V) = 0$ if $\lambda$ has repeated entries. If $\lambda$ is expressed as a subset of $[n]$, then $p_{\lambda}(V)$ is computed by taking the elements of $\lambda$ in increasing order. The \emph{Pl\"ucker embedding} is given by 
\begin{equation*}
    \Gr(d,n;\F)\hookrightarrow \P^{{n \choose d}-1}_{\F} \quad V \mapsto [p_{\lambda}(V):\lambda \in \textstyle{\binom{[n]}{d}}].
\end{equation*}
 Let  $S = \F[p_{\lambda} \, : \, \lambda \in \binom{[n]}{d}]$ be the homogeneous polynomial ring of $\P^{{n \choose d}-1}_{\F}$. The Grassmannian $\Gr(d,n;\F)$ is a closed subvariety of $\P^{{n\choose d}-1}_{\F}$. Its ideal in $S$ is called the \textit{Pl\"ucker ideal}, and is generated by 
 \begin{equation}
    \label{eq:pluecker-ideal-gens}
     \sum_{b=1}^{d+1} (-1)^{b} p_{i_1,\ldots,i_{d-1}, j_b} p_{j_1,\ldots, \widehat{j_{b}}, \ldots, j_{d+1} } 
 \end{equation}
where $(i_1,\ldots,i_{d-1})$ and $(j_{1},\ldots,j_{d+1})$ are sequences of elements of $[n]$; here, the variable $p_{\lambda}$, when $\lambda$ is a sequence rather than a set, is defined in a similar way as for $p_{\lambda}(V)$ above. See, e.g., \cite[Chapter~9.1]{FultonYT}.

\subsection{Matroids basics}\label{sec:matroids_basics}
We review basic concepts from matroid theory. For a full treatment, see \cite{Oxley}. Fix nonnegative integers $d\leq n$. A \textit{matroid} $\sQ$ is a pair $(E,\pB)$ where $E$ is a $n$-element set and $\pB$ is a nonempty subset of $\binom{E}{d}$ satisfying the \textit{basis exchange axiom}: For each pair of distinct elements $\lambda_1,\lambda_2$ of $\pB$ and $b_1\in \lambda_1\setminus \lambda_2$, there exists $b_2\in \lambda_2\setminus \lambda_1$ such that $(\lambda_1 \setminus b_1)\cup b_2$ lies in $\pB$. An element of $\pB$ is called a \textit{basis} of $\sQ$; for emphasis, we write $\pB(\sQ)$ for the set of bases of $\sQ$. A subset $\mu\subset E$ is \emph{independent} if it is contained in some basis of $\sQ$ (in particular, $\emptyset$ is independent), and is called \emph{dependent} otherwise. A \emph{circuit} of $\sQ$ is a dependent set $\gamma\subset E$ such that every proper subset of $\gamma$ is independent in $\sQ$. 

The integer $d$ above is called the \textit{rank} of $\sQ$ and the set $E$ is called the \textit{ground set} of $\sQ$. A \textit{$(d,E)$--matroid} is a rank $d$ matroid with ground set $E$; when $E=[n]:=\{1,\ldots,n\}$, we simply call this a  \textit{$(d,n)$--matroid}. We obtain a \emph{rank function} $\rho_{\sQ}: E\to \N$, whose values are given by
\begin{equation*}
    \rho_{\sQ}(\lambda) =\text{max}\{|\lambda\cap B| : B\in\pB(\sQ)\},
\end{equation*}
\noindent for any $\lambda\subset E$.  A rank $k$ subset $\eta\subset E$ is a \emph{flat} of $\sQ$ if $\rho_{\sQ}(\eta\cup a)=k+1$ for any $a\in E\setminus \eta$. If $\sQ$ has rank $d$, the rank $d-1$ flats are known as \emph{hyperplanes}. 
While the definition in terms of bases is most relevant to our work, matroids admit many cryptomorphic characterizations. In particular, one can define a matroid axiomatically via independent sets, circuits, rank functions, flats, or hyperplanes.

Let $\sQ$ be a $(d,E)$--matroid. A \textit{loop} is an element of $E$ not contained in any basis of $\sQ$, and a \textit{coloop} is an element of $E$ contained in every basis of $\sQ$.  Two distinct elements $a,b\in E$ are \textit{parallel} if $a$ and $b$ are not loops and $\{a,b\}\not\subset \lambda$ for all $\lambda \in \pB(\sQ)$. A matroid is \textit{simple} if it has no loops or parallel elements. Given two matroids $\sQ_1$ and $\sQ_2$ on the ground sets $E_1$ and $E_2$ respectively, the \textit{direct sum} of $\sQ_1$ and $\sQ_{2}$, denoted $\sQ_{1}\oplus \sQ_{2}$, is the matroid on $E_1\sqcup E_2$ with bases
\begin{equation*}
    \pB(\sQ_{1} \oplus \sQ_{2}) = \{ \lambda_1\cup \lambda_2 \, : \, \lambda_1\in \pB(\sQ_1), \lambda_2\in \pB(\sQ_2) \}.
\end{equation*}

\noindent A matroid is \textit{connected} if it does not admit a decomposition into a nontrivial direct sum. 

Standard operations in matroid theory are those of duality, deletion, and contraction.  The \textit{dual} of $\sQ$ is the $(|E|-d,E)$ matroid $\sQ^{\vee}$ whose bases are
\begin{equation*}
    \pB(\sQ^{\vee}) = \{E\setminus \lambda \, : \, \lambda \in \pB(\sQ)\}.
\end{equation*}
Clearly $(\sQ^{\vee})^{\vee}=\sQ$. Let $\eta\subset E$.  The \emph{deletion} by $\eta$ is the matroid $\sQ\setminus\eta$ with ground set $E\setminus \eta$ whose independent sets are the independent sets of $\sQ$ contained in $E\setminus \eta$. The \textit{restriction} of $\sQ$ to $\eta$ is the matroid  $\sQ_{|\eta} = \sQ \setminus (E\setminus \eta)$; its ground set is $\eta$.  The \emph{contraction} by $\eta$ is $\sQ/\eta = (\sQ^{\vee}\setminus\eta)^{\vee}$; its ground set is $E\setminus \eta$. In \S\ref{sec:matroidStrataAndBasicOperations} we will discuss polynomial maps between affine schemes induced by the above operations, and the analogous "reverse" operations for deletion and contraction.

\subsection{Matroid strata and their coordinate rings}\label{sec:coordrings_tsc}

Let $V\subset \F^{n}$ be a $d$-dimensional linear subspace. Its matroid $\sQ(V)$ is the $(d,n)$--matroid with bases
\begin{equation*}
    \pB(\sQ(V)) = \{\lambda \in \textstyle{\binom{[n]}{d}} \, : \, p_{\lambda}(V) \neq 0\}.
\end{equation*}
A $(d,E)$--matroid is \emph{$\F$-realizable} if there exists a $d$-dimensional linear subspace $V\subset \F^{n}$ such that $\sQ \cong \sQ(V)$. Such a $V$ is called a \textit{realization} of $\sQ$. 

Let $\sQ$ be a  $\F$-realizable $(d,n)$--matroid $\sQ$. Its \textit{matroid stratum}, as a set, is
\begin{equation*}
\Gr(\sQ; \F) = \{V\in\Gr(d,n;\F) \, | \,  p_{\lambda}(V) \neq 0 \text{ if and only if } \lambda\in \pB(\sQ)\}.   
\end{equation*}
The Grassmannian $\Gr(d,n;\F)$ decomposes into matroid strata (although this is not a \textit{Whitney stratification}, see \cite[\S 5]{GelfandGoreskyMacPhersonSerganova}). 

We describe $\Gr(\sQ;\F)$ as an affine scheme. More precisely, we review the presentation for the coordinate ring of $\Gr(\sQ;\F)$ in terms of matrix coordinates.  Fix a \textit{reference basis} $\lambda_{0} \in \pB(\sQ)$.  To streamline the exposition, we assume that $\lambda_{0} = \{1,\ldots,d\}$. Define the polynomial ring 
\begin{equation*}
    B =  \F[x_{ij} \, : \, 1\leq i \leq d,\, 1\leq j \leq n-d].
\end{equation*}
and the matrix of variables
\begin{equation*}
A = \begin{bmatrix}
    1 &0 &\dots &0 &x_{11} &\dots &x_{1,n-d}\\
    0 &1 &\dots &0 &x_{21} &\dots &x_{2,n-d}\\
    \vdots &\vdots &\dots &\vdots &\vdots &\dots &\vdots\\
    0 &0 &\dots &1 &x_{d1} &\dots &x_{d,n-d}\\
\end{bmatrix}.
\end{equation*}
Recall from \S\ref{sec:grassmannian} that, given a $d\times n$ matrix $A$ and a tuple $\lambda = (i_1,\ldots,i_d)$ of  elements of $[n]$, we write $A_{\lambda}$ for the determinant of the $d\times d$ submatrix formed by the columns of $A$ indexed by $\lambda$ in order. If $\lambda$ is given as a set, $A_{\lambda}$ is computed by taking the columns in increasing order. If $|\lambda| \neq d$, or if $A_{\lambda}$ has a repeated entry, then $A_{\lambda} = 0$. The variable $x_{ij}$ can be expressed as a maximal minor:
\begin{equation*}
    x_{ij} = (-1)^{d+i} A_{[d]\setminus i \cup (d+j)}.
\end{equation*}
Define the polynomial ring
\begin{equation*}
B_{\sQ} = \F[x_{ij}\, : \, ([d]\setminus i)\cup (d+j) \in \pB(\sQ)]
\end{equation*}
and the surjective ring homomorphism
\begin{equation}
\label{eq:piQ}
    \pi_{\sQ}:B \to B_{\sQ}, \hspace{10pt} \pi_{\sQ}(x_{ij}) = 
    \begin{cases}
    x_{ij} & \text{ if } ([d]\setminus i)\cup (d+j) \in \pB(\sQ), \\
    0 & \text{ otherwise.} 
    \end{cases}   
\end{equation}
Define the ideal $I_{\sQ}$, multiplicative semigroup $U_{\sQ}$, and ring $S_{\sQ}$
\begin{equation*}
    I_{\sQ} = \langle \pi_{\sQ}(A_\lambda): \lambda \notin\pB(\sQ)\rangle, 
    \quad
    U_{\sQ} = \langle \pi_{\sQ}(A_{\lambda}):\lambda\in \pB(\sQ)\rangle_{\smgp}, 
    \quad 
    S_{\sQ} = U_{\sQ}^{-1} B_{\sQ} / I_{\sQ}. 
\end{equation*}
Thus, $\Gr(\sQ;\F)$ is the affine scheme $\Spec(S_{\sQ})$. When $\F$ is algebraically closed, the closed points of $\Gr(\sQ;\F)$ correspond to the realizations of $\sQ$ as defined previously.

\subsection{Matroid realization spaces and their coordinate rings}
\label{sec:coordrings_realization_spaces}
A \textit{hyperplane arrangement} is a sequence of $n$ hyperplanes in $\P^{d-1}$. Given a hyperplane arrangement  $(h_1,\ldots,h_n)$ such that $\bigcap^{n}_{i=1}h_i=\emptyset$, we obtain a matroid $\sQ$ whose ground set is $[n]$ and with bases given by
\begin{equation*}
\pB(\sQ) = \{ \lambda \in \textstyle{\binom{[n]}{d}} \, : \, \bigcap_{i\in \lambda} h_i = \emptyset \}.
\end{equation*}
A loopless matroid is the matroid of a hyperplane arrangement in $\P^{d-1}_{\F}$ if and only if it is $\F$-realizable. 

Let $\sQ$ be a $\F$-realizable and connected $(d,n)$--matroid. (The study of realization spaces of disconnected matroids readily reduces to the connected case. We refer the reader to \cite[\S~9]{KatzMatroid} which summarizes the discussion in \cite[\S~1.6]{Lafforgue2003}).  The \textit{realization space}  of $\sQ$ is a scheme $\pR(\sQ;\F)$ that parameterizes equivalence classes of hyperplane arrangements; here, two hyperplane arrangements $(g_1,\ldots,g_n)$ and $(h_1,\ldots,h_n)$ are equivalent if there is a $\phi \in \PGL_{d}(\F)$ such that $\phi(g_i) = h_i$ for $i=1,\ldots,n$. 

Denote by $T\subset \PGL_{n}(\F)$ the diagonal subtorus, which is isomorphic to $(\F^*)^{n}/\F^*$. The torus $T$ acts on linear subspaces of $\F^{n}$ by scaling coordinates. This induces an action of $T$ on $\Gr(d,n;\F)$, and on each matroid stratum $\Gr(\sQ;\F)$. By \cite[Th\'eor\`eme~1.11]{Lafforgue2003}, if $\sQ$ is connected in the sense of \S\ref{sec:matroids_basics}, then the action $T \curvearrowright \Gr(\sQ;\F)$ is free and we have an isomorphism
\begin{equation}
\label{eq:gelfand-macpherson}
    \pR(\sQ;\F) \cong \Gr(\sQ;\F)/T.
\end{equation}
This is a matroidal refinement of the \textit{Gelfand--MacPherson correspondence} \cite{GelfandMacPherson}.  
In particular, $\Gr(\sQ;\F)$ and $\pR(\sQ;\F)$ have the same singularity types and connected components. 
The scheme $ \pR(\sQ;\F)$ is affine, and the general procedure to obtain its coordinate ring is similar to that of a matroid stratum with a few subtle differences that we describe now.

We now describe the coordinate ring of $\pR(\sQ;\F)$.  So as to not overburden the notation with extra decorations, we use symbols similar to those for the coordinate ring of a matroid stratum. In later sections, we hope that the context clarifies the usage. Assume that $\sQ$ has a a circuit $\gamma_{0}$ of size $d+1$, which we call our \textit{reference circuit}.\footnote{Such a circuit need not exist, see \S \ref{sec:4n} for some examples. Nevertheless, this hypothesis greatly simplifies the following construction. }
After applying a suitable permutation, we may assume that $\gamma_0 = \{1,2,\ldots,d+1\}$. Since $\gamma_0$ is a circuit, the subset $\lambda_0 = \{1,2,\ldots,d\}$ is a basis of $\sQ$. Define 
\begin{equation*}
    B = \F[x_{ij} \, : \, 1 \leq i \leq d, \, 1\leq j \leq n-d-1]
\end{equation*}
and the matrix of variables
\begin{equation*}
A = \begin{bmatrix}
    1 &0 &\dots &0 & 1 &x_{11} &\dots &x_{1,n-d-1}\\
    0 &1 &\dots &0 & 1 &x_{21} &\dots &x_{2,n-d-1}\\
    \vdots &\vdots & \dots &\vdots &\vdots &\vdots &\dots &\vdots\\
    0 &0 &\dots &1 &1 &x_{d1} &\dots &x_{d,n-d-1}\\
\end{bmatrix}.
\end{equation*}
Define the function $\mu:[n-d-1] \to [d]$, which depends on $\sQ$ and $\lambda_0$, by
\begin{equation*}
    \mu(j) = \max(i\in [d] \, : \, ([d] \setminus i)\cup (d+j+1) \in \pB(\sQ)).
\end{equation*}
Define the polynomial ring
\begin{equation*}
B_{\sQ} = \F[x_{ij}\, : \, ([d] \setminus i)\cup (d+j+1) \in \pB(\sQ) \text{ and } i\neq \mu(j)]
\end{equation*}
and the surjective ring homomorphism $\pi_{\sQ}:B \to B_{\sQ}$ by
\begin{equation*}
    \pi_{\sQ}(x_{ij}) = 
    \begin{cases}
    x_{ij} & \text{ if } ([d]\setminus i) \cup (d+j+1)\in \pB(\sQ) \text{ and } i\neq \mu(j), \\
    1 & \text{ if } i = \mu(j), \\
    0 & \text{ otherwise.} 
    \end{cases}   
\end{equation*}
Define the ideal $I_{\sQ}$ and multiplicative semigroup $S_{\sQ}$ by
\begin{equation*}
    I_{\sQ} = \langle \pi_{\sQ}( A_\lambda): \lambda \notin \pB(\sQ)\rangle, \hspace{20pt} 
    U_{\sQ} = \langle \pi_{\sQ}(A_{\lambda}):\lambda\in \pB(\sQ)\rangle_{\smgp}.
\end{equation*}
The coordinate ring of the affine scheme $\pR(\sQ;\F)$ is
\begin{equation*}
\F\left[\pR(\sQ;\F)\right]\cong U_{\sQ}^{-1}B_{\sQ}/I_{\sQ}.
\end{equation*}

\begin{example}
\label{ex:3-9}
We demonstrate the procedure in \ref{sec:coordrings_realization_spaces} to compute the coordinate ring of the $(3,9)$-matroid $\sQ$, where the nonbases of $\sQ$ are given by the colinearities in Figure \ref{fig:3_9_example}. We then show that $\pR(\sQ,\C)$ is smooth.

\begin{figure}
    \centering
    \includegraphics[height = 4cm]{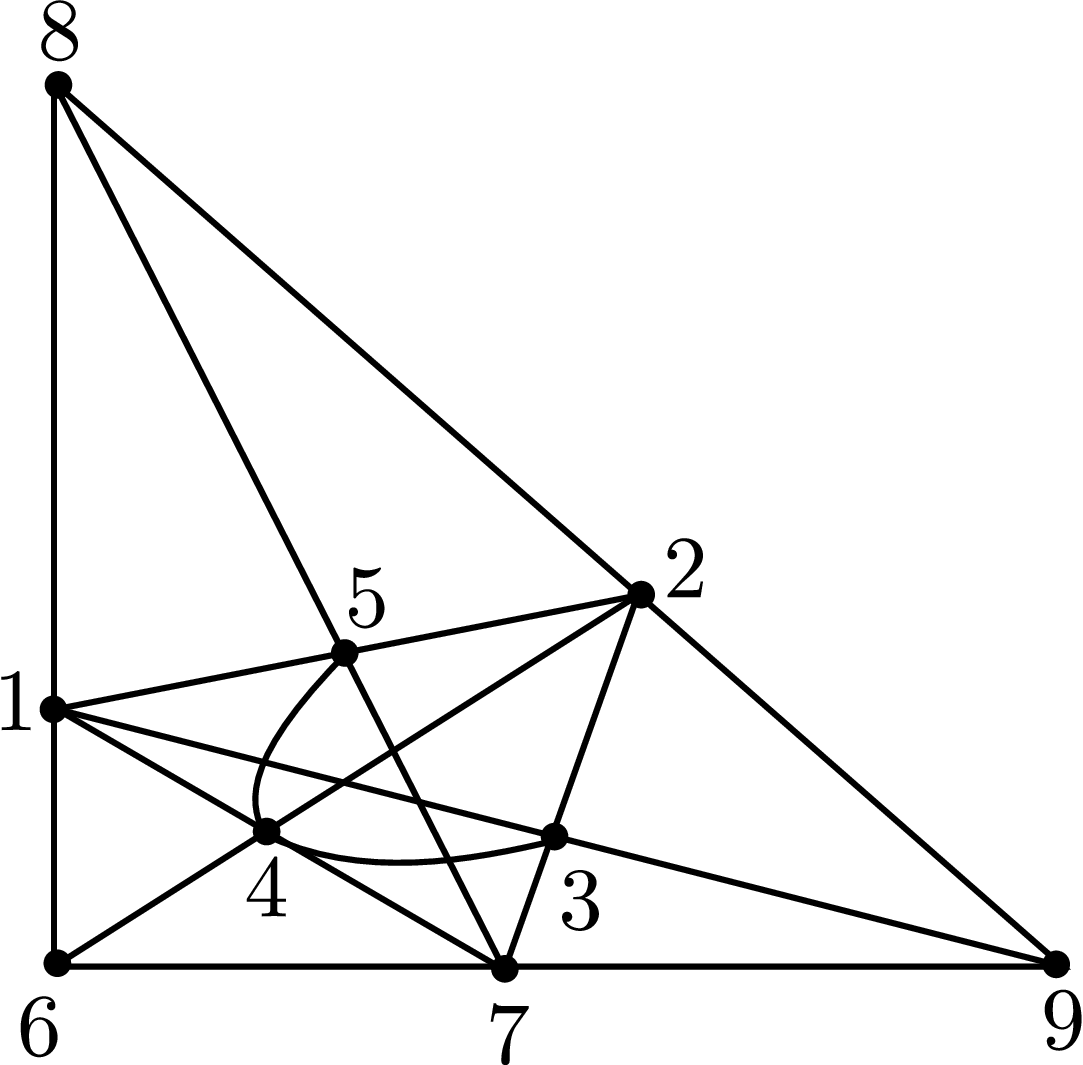}
    \caption{A projective realization of the $(3,9)$--matroid from Example \ref{ex:3-9}}
    \label{fig:3_9_example}
\end{figure}
The matrix of indeterminants below realizes $\sQ$, provided that the minors indexed by nonbases are zero, and those indexed by bases are nonzero. 

\begin{equation*}
A = \begin{bmatrix}
    1   &0   &0   &1   &x_1   &x_2    &0   &x_5   &x_7\\
    0   &1   &0   &1    &1   &x_3   &x_4   &x_6    &0\\
    0   &0   &1   &1   & 0    &1    &1    &1    &1
\end{bmatrix}.
\end{equation*}

Hence, we have $\pR(\sQ,\C)\cong \Spec\left(U^{-1}_{\sQ}B_{\sQ}/I_{\sQ}\right)$ where $B_{\sQ} = \C[x_1,\ldots,x_7]$,  $I_{\sQ}$ is the ideal
   \begin{equation*}
   I_{\sQ} = \left\langle \begin{array}{c}
       x_1 - 1, \, 
       x_2 - 1, \,
       x_3 - x_7 - 1, \, 
       x_4 - 1, \,
       x_5 - x_7, \,
       x_6 - x_7 - 1, \, 
       x^2_{7} + 1
   \end{array}\right\rangle
   \end{equation*}
   and $U_{\sQ}$ is the multiplicative semigroup generated by 
   {\footnotesize
   \begin{gather*}
x_1,
x_2,
x_3,
x_4,
x_5,
x_6,
x_7,
x_3 - 1,
x_5 - 1,
x_6 - 1,
x_7 - 1, 
x_2 - x_3,
x_2 - x_5,
x_2 - x_7, 
x_3 - x_4, 
x_4 - x_6, 
x_5 - x_6, \\
x_1 + x_7 - 1,
x_1x_3 - x_1 - x_2 + 1,
x_1x_3 - x_1x_6 - x_2 + x_5,
x_1x_3 - x_2, 
x_1x_3 - x_2 + x_7,
x_1x_3 - x_1x_4 - x_2, \\
x_1x_4 - x_1 + 1,
x_1x_4 + x_7,
x_1x_6 - x_1 - x_5 + 1,
x_1x_6 - x_5, 
x_1x_6 - x_5 + x_7,
x_2 + x_3x_7 - x_3 - x_7, \\
x_2x_4 - x_2 + x_3 - x_4,
x_2x_4 - x_2x_6 + x_3x_5 - x_4x_5, 
x_2x_6 - x_2 - x_3x_5 + x_3 + x_5 - x_6,
x_2x_6 - x_3x_5, \\
x_2x_6 - x_3x_5 + x_3x_7 - x_6x_7, 
x_4x_5 - x_4 - x_5 + x_6, 
x_4x_5 - x_4x_7 + x_6x_7,
x_4x_7 - x_4 - x_7,
x_5 + x_6x_7 - x_6 - x_7.
   \end{gather*}
   }
Next we apply the ring homomorphism $\phi:B_{\sQ}\to \C[x]$ given by
\begin{equation*}
\begin{array}{ccccccc}
     x_1\mapsto 1 \, &x_2\mapsto 1  \, &x_3\mapsto x+1 &x_4\mapsto 1 \, &x_5\mapsto x \, &x_6\mapsto x+1  &x_7\mapsto x.
\end{array}
\end{equation*}
With relations from $I_{\sQ}$ and $U_{\sQ}$, the map $\phi$ induces an isomorphism 
\begin{equation*}
U^{-1}_{\sQ}B_{\sQ}/I_{\sQ} \cong U^{-1}\C[x]/\langle x^2+1\rangle \text{ where } U = \langle x+1,x-1,x \rangle_{\smgp}.
\end{equation*} 
Since $x+1,x-1,x$ are units in $\C[x]/\langle x^2+1\rangle$, we have $\pR(\sQ,\C)\cong \Spec(\C[x]/\langle x^2+1\rangle)$. The coordinate ring of $\pR(\sQ;\C)$ is isomorphic to $\C \times \C$, and so $\pR(\sQ;\C)$ is a reduced 0-dimensional scheme consisting of 2 closed points. Thus $\pR(\sQ;\C)$ is smooth. 

The homomorphism $\phi$ and induced isomorphism mirror \cite[Algorithm~6.12]{CoreyLuber}, a technique we use in later examples and throughout the computational aspects of our proofs. 
\end{example}

\section{Matroid strata and basic operations}
\label{sec:matroidStrataAndBasicOperations}

We are primarily interested in determining which matroids have smooth realization spaces. By the isomorphism in Formula \eqref{eq:gelfand-macpherson}, we can deduce this from properties of matroid strata. In this section, we describe how matroid strata behave under the operations introduced in \S\ref{sec:matroids_basics}. Furthermore, we review the notions of principal extension and coextension. Realization spaces for matroids constructed via these operations can be studied by applying knowledge about matroids of smaller rank, or on a smaller ground set. The first two propositions are well-established, see \cite[Proposition~9.4]{KatzMatroid}.

First consider the direct sum and dual operations. If $V_1$ and $V_2$ are realizations of $\sQ_1$ and $\sQ_2$ respectively, then $V_1\oplus V_2$ is a realization of the direct sum $\sQ_1\oplus \sQ_2$. More generally, we have the following. 
\begin{proposition}
\label{prop:directsum}
    If $\sQ_1$ and $\sQ_2$ are $\F$-realizable, then $\sQ_1 \oplus \sQ_2$ is $\F$-realizable and
    \begin{equation*}
        \Gr(\sQ_1 \oplus \sQ_2; \F) \cong \Gr(\sQ_1;\F) \times \Gr(\sQ_2;\F).
    \end{equation*}
\end{proposition}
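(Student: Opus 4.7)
The plan is to work directly with the matrix-coordinate presentation of $\Gr(\sQ;\F)$ from \S\ref{sec:coordrings_tsc}. Let $\sQ_i$ be a $(d_i,E_i)$-matroid for $i=1,2$. Fix realizations $V_i \subset \F^{E_i}$, choose reference bases $\lambda_0^{(i)} \in \pB(\sQ_i)$, and observe that $\lambda_0 := \lambda_0^{(1)} \sqcup \lambda_0^{(2)}$ lies in $\pB(\sQ_1 \oplus \sQ_2)$ (so the scheme is nonempty, confirming realizability). After reindexing we may assume $\lambda_0^{(i)} = [d_i]$ in each block, and the matrix of variables associated to $\lambda_0$ for $\sQ_1 \oplus \sQ_2$ takes the form
\begin{equation*}
A = \begin{bmatrix} I_{d_1} & 0 & X_{11} & X_{12} \\ 0 & I_{d_2} & X_{21} & X_{22} \end{bmatrix},
\end{equation*}
where the free columns $d+j$ indexed by $E_1 \setminus \lambda_0^{(1)}$ give the blocks $X_{11}, X_{21}$, and those indexed by $E_2 \setminus \lambda_0^{(2)}$ give $X_{12}, X_{22}$.

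The key structural observation is that each variable in $X_{12}$ or $X_{21}$ is indexed by a subset $([d]\setminus i)\cup(d+j)$ that mixes rows and columns across the two blocks in a way that is never a basis of $\sQ_1 \oplus \sQ_2$. Hence $\pi_{\sQ_1 \oplus \sQ_2}$ kills exactly these variables, so that $B_{\sQ_1\oplus \sQ_2} \cong B_{\sQ_1} \otimes_\F B_{\sQ_2}$, and $\pi_{\sQ_1\oplus \sQ_2}(A)$ becomes block diagonal with blocks $[I_{d_1}\mid \pi_{\sQ_1}(X_{11})]$ and $[I_{d_2}\mid \pi_{\sQ_2}(X_{22})]$. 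By Laplace/block-expansion, for any $\lambda \in \binom{[n]}{d}$ with $\lambda_i := \lambda \cap E_i$, we obtain
\begin{equation*}
\pi_{\sQ_1\oplus\sQ_2}(A_\lambda) = \begin{cases} \pm\, \pi_{\sQ_1}(A^{(1)}_{\lambda_1})\cdot \pi_{\sQ_2}(A^{(2)}_{\lambda_2}) & \text{if } |\lambda_i|=d_i \text{ for } i=1,2, \\ 0 & \text{otherwise.} \end{cases}
\end{equation*}

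Combining these two facts, I would then compare the ideals and multiplicative semigroups on the two sides. The bases of $\sQ_1\oplus\sQ_2$ are exactly the $\lambda_1 \sqcup \lambda_2$ with $\lambda_i \in \pB(\sQ_i)$, so $U_{\sQ_1\oplus\sQ_2}$ is (up to sign) generated by products $u_1 u_2$ with $u_i \in U_{\sQ_i}$; in particular, after localization it inverts each $U_{\sQ_i}$ individually. Similarly, the nonbasis generators of $I_{\sQ_1\oplus\sQ_2}$ are products $\pi_{\sQ_1}(A^{(1)}_{\lambda_1})\pi_{\sQ_2}(A^{(2)}_{\lambda_2})$ in which at least one factor is a generator of $I_{\sQ_i}$; after inverting the nonzero $U_{\sQ_i}$-elements, these generate precisely the extension ideal of $I_{\sQ_1}\otimes B_{\sQ_2} + B_{\sQ_1}\otimes I_{\sQ_2}$. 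Passing to the quotient yields $S_{\sQ_1\oplus\sQ_2} \cong S_{\sQ_1}\otimes_\F S_{\sQ_2}$, which is the desired isomorphism.

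The steps are mostly bookkeeping, so I do not expect a serious obstacle, but the one point that requires care is verifying that $I_{\sQ_1\oplus\sQ_2}$ really does reduce to the tensor product of the $I_{\sQ_i}$ after localization: one must argue that although a generator has the form $u_1 f_2$ or $f_1 u_2$ where $u_i \in U_{\sQ_i}$ and $f_i \in I_{\sQ_i}$, the unit factor can be cleared in the localization so that nothing new is added beyond the tensor-product ideal. The sign issues coming from reordering columns in the block-expansion of $A_\lambda$ are routine and do not affect the underlying set-theoretic or ideal-theoretic comparisons.
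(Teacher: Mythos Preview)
Your argument is correct. The paper does not actually prove this proposition; it merely remarks that it is well-established and cites \cite[Proposition~9.4]{KatzMatroid}. Your direct computation via the matrix-coordinate presentation of \S\ref{sec:coordrings_tsc} is a valid and self-contained route. One small simplification: the point you flagged as requiring care is in fact automatic even \emph{before} localization. Since the reference minor $\pi_{\sQ_2}(A^{(2)}_{\lambda_0^{(2)}})$ equals $1$, every generator $f_1 = \pi_{\sQ_1}(A^{(1)}_{\lambda_1})$ of $I_{\sQ_1}$ already occurs (up to sign) as the generator $\pi_{\sQ_1\oplus\sQ_2}(A_{\lambda_1\sqcup\lambda_0^{(2)}})$ of $I_{\sQ_1\oplus\sQ_2}$, and symmetrically for $I_{\sQ_2}$; conversely, every nonzero generator of $I_{\sQ_1\oplus\sQ_2}$ is a product with at least one factor in some $I_{\sQ_i}$. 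Hence $I_{\sQ_1\oplus\sQ_2} = I_{\sQ_1}\otimes B_{\sQ_2} + B_{\sQ_1}\otimes I_{\sQ_2}$ on the nose, and no unit-clearing is needed.
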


\noindent If $V\subset \F^{n}$ is a realization of a matroid $\sQ$, then $V^{\perp} = \{\sv\in (\F^{n})^{\vee} \, : \, \bk{\su}{\sv} = 0 \text{ for all } \su\in V \}$ is a realization of its dual $\sQ^{\vee}$.  More generally, we have the following. 
\begin{proposition}
\label{prop:duality}
    If $\sQ$ is $\F$-realizable, then so is $\sQ^{\vee}$ and
    \begin{equation*}
        \Gr(\sQ^{\vee};\F) \cong \Gr(\sQ;\F).
    \end{equation*}
\end{proposition}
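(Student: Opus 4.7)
The plan is to deduce the proposition from the classical Grassmannian duality isomorphism $\iota \colon \Gr(d,n;\F) \xrightarrow{\sim} \Gr(n-d,n;\F)$ defined on points by $V\mapsto V^{\perp}$. This map is a well-known isomorphism of schemes; the only thing to verify is that it carries the stratum $\Gr(\sQ;\F)$ onto $\Gr(\sQ^{\vee};\F)$.

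The key input is the Pl\"ucker coordinate identity relating $V$ and $V^{\perp}$: for each $\lambda \in \binom{[n]}{n-d}$,
\begin{equation*}
    p_{\lambda}(V^{\perp}) = \pm\, p_{[n]\setminus \lambda}(V),
\end{equation*}
with the sign determined by the parity of the shuffle separating $\lambda$ and its complement. I would prove this by choosing a full-rank $d\times n$ matrix $A$ with row span $V$ together with a full-rank $(n-d)\times n$ matrix $B$ with row span $V^{\perp}$, so that $AB^{T}=0$, and then invoking a Laplace-style expansion of complementary minors (this is the same computation used in the standard proof that the two Pl\"ucker embeddings agree up to signs).

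With this identity in hand, the conclusion $p_{\lambda}(V^{\perp})\neq 0 \iff p_{[n]\setminus \lambda}(V)\neq 0$ is immediate, so $\pB(\sQ(V^{\perp})) = \{[n]\setminus \lambda : \lambda \in \pB(\sQ(V))\} = \pB(\sQ(V)^{\vee})$. Thus $V^{\perp}$ realizes $\sQ^{\vee}$ whenever $V$ realizes $\sQ$, which gives $\F$-realizability of $\sQ^{\vee}$. For the second statement, the same identity shows that $\iota$ restricts, at the level of functors of points, to a morphism $\Gr(\sQ;\F)\to \Gr(\sQ^{\vee};\F)$; applying $\iota$ again to $\sQ^{\vee}$ and using $(\sQ^{\vee})^{\vee}=\sQ$ together with the natural identification $(V^{\perp})^{\perp}=V$ provides the inverse, giving the scheme-theoretic isomorphism.

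I do not expect a serious obstacle here: the content is entirely the Pl\"ucker duality formula together with the observation that vanishing/nonvanishing of Pl\"ucker coordinates corresponds exactly to the complementary condition defining the dual matroid. The only point requiring some care is the bookkeeping of signs in the Pl\"ucker identity, but since we only need nonvanishing (not the value itself), even a sign ambiguity is harmless. One might alternatively bypass the explicit minor identity altogether by combining the already-established isomorphism $\Gr(d,n;\F)\cong \Gr(n-d,n;\F)$ with the matroid-theoretic fact that it sends matroid strata to matroid strata indexed by the dual matroid.
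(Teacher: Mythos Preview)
Your proposal is correct and matches the paper's approach: the paper does not give a standalone proof but simply notes that $V\mapsto V^{\perp}$ realizes $\sQ^{\vee}$ and cites \cite[Proposition~9.4]{KatzMatroid} for the well-established isomorphism. Your sketch via the Pl\"ucker duality identity $p_{\lambda}(V^{\perp}) = \pm\, p_{[n]\setminus\lambda}(V)$ is exactly the standard argument underlying that citation.
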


Next we consider deletion and contraction. Let $\sQ$ be a $\F$-realizable matroid on $E$ and let $a\in E$. Set $E' = E\setminus a$ and $\sQ' = \sQ\setminus a$. We define the morphism $\varphi_{\sQ,\sQ'}:\Gr(\sQ;\F) \to \Gr(\sQ';\F)$ by the  composition
\begin{equation*}
 \Gr(\sQ;\F) \to \Gr(\sQ';\F) \times \Gr(\sQ/E';\F) \to  \Gr(\sQ';\F).
\end{equation*}
Here, the second map is the projection onto the first factor and the first map is the face morphism in  \cite[Proposition I.6]{Lafforgue2003} and \cite[Proposition~3.1]{CoreyGrassmannians}, which we recall in \S \ref{sec:inverse_limits_tight_spans} (more precisely, this corresponds to a face contained in the boundary of the hypersimplex defined by $x_a=0$). The morphism $\varphi_{\sQ,\sQ'}$ need not be smooth, see \cite[Example~8.4]{CoreyGrassmannians} and Example \ref{ex:morphismNotSmooth}. A key goal is to isolate conditions where this morphism is smooth and surjective with connected fibers. Two broad classes of examples come from principal extensions and coextensions of matroids, as we now describe. 

Let $\sQ'$ be a $(d,E')$--matroid, and let $\eta'$ be a flat of $\sQ'$. The \textit{principal extension} of $\sQ'$ by $a$ into $\eta'$ is the $(d,E)$--matroid $\sQ = \pe{\sQ'}{\eta'}{a}$ where $E = E'\sqcup a$ and 
\begin{equation*}
    \pB(\sQ) = \pB(\sQ') \cup \{(\lambda' \setminus b )\cup a \, : \, \lambda'\in \pB(\sQ') \text{ and } b\in \lambda'\cap \eta'\}.
\end{equation*}

\noindent The principal extension above is called a \textit{free extension} if $\eta' = E'$.  If $\F$ is an infinite field and $\sQ'$ is $\F$-realizable, then $\sQ$ is also $\F$-realizable \cite[Lemma~2.1]{MayhewNewmanWhittle}. In the next proposition, we prove that principal extension preserves the singularity type. Here is the intuitive idea. All projective realizations of $\sQ$  are obtained by freely placing, in a projective realization of $\sQ'$, a point on the linear subspace spanned by the flat $\eta'$ outside of finitely many hypersurfaces, and hence $\Gr(\sQ;\F)$ is an open subscheme of $\Gr(\sQ';\F) \times (\G_{m})^k$ where $k$ is the rank of $\eta$. 

\begin{proposition}
\label{prop:principalExtension}
Let $\sQ, \sQ'$ be $\F$-realizable matroids such that $\sQ$ is a principal extension of $\sQ'$. Then the morphism 
\begin{equation*}
    \varphi_{\sQ,\sQ'}:\Gr(\sQ;\F) \to \Gr(\sQ';\F)
\end{equation*}
is smooth. If $\F=\C$ then, over closed points, this morphism is surjective with connected fibers. 
\end{proposition}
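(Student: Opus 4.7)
The plan is to identify the fiber of $\varphi_{\sQ,\sQ'}$ over each $V'\in\Gr(\sQ';\F)$ as an open subscheme of a $k$-dimensional affine space, where $k:=\rho_{\sQ'}(\eta')$, and then to globalize this picture to a relatively open subscheme of a rank-$k$ vector bundle. After a permutation I may assume that $\lambda_0=\{1,\dots,d\}$ is a basis of $\sQ'$, hence also of $\sQ$ since principal extension only enlarges $\pB$. A realization of $\sQ$ is then represented by a matrix $A=[I_d\,|\,X\,|\,v_a]$ with $v_a\in\F^d$, while $V'$ is represented by $A'=[I_d\,|\,X]$; the morphism $\varphi_{\sQ,\sQ'}$ forgets the column $v_a$, so the fiber over $V'$ is exactly the locus of $v_a\in\F^d$ with $\sQ(A)=\sQ$.

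The first substantive step is to translate $\sQ(A)=\sQ$ into conditions on $v_a$. For each $\mu\subset E'$ with $|\mu|=d-1$, the minor $\det([A'|_\mu\,|\,v_a])$ is linear in $v_a$; it vanishes identically when $\rho_{\sQ'}(\mu)<d-1$, and otherwise its zero locus is the hyperplane $W_F\subset\F^d$ spanned by the columns of $A'|_\mu$, where $F=\overline{\mu}$ denotes the closure in $\sQ'$. The definition of $\pe{\sQ'}{\eta'}{a}$ says that $\mu\cup a\in\pB(\sQ)$ precisely when $F$ is a hyperplane of $\sQ'$ with $\eta'\not\subset F$. Hence the fiber is cut out by the linear equations $v_a\in W_F$ for hyperplanes $F\supset\eta'$, together with the open conditions $v_a\notin W_F$ for hyperplanes $F\not\supset\eta'$.

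The standard matroid fact that every flat is the intersection of the hyperplanes containing it then gives $\bigcap_{F\supset\eta'}W_F=W_{\eta'}$, a linear subspace of $\F^d$ of dimension $k$. These subspaces assemble into a rank-$k$ vector subbundle $\mathcal{W}\subset\Gr(\sQ';\F)\times\F^d$, trivialized over each standard chart by a maximal independent subset of $\eta'$, and the open conditions cut out a relatively open subscheme $\mathcal{U}\subset\mathcal{W}$. Using the coordinate ring presentation of \S\ref{sec:coordrings_tsc}, I would verify that $(V',v_a)\mapsto \mathrm{rowspan}[A'\,|\,v_a]$ defines an isomorphism $\mathcal{U}\to\Gr(\sQ;\F)$ over $\Gr(\sQ';\F)$, so $\varphi_{\sQ,\sQ'}$ is smooth as the restriction of a vector bundle projection to an open subscheme. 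For $\F=\C$, the fiber over a closed point $V'$ is the complement in $W_{\eta'}(V')\cong\C^k$ of finitely many proper linear hyperplanes (no $W_F$ with $F\not\supset\eta'$ contains $W_{\eta'}$, again by the flat-intersection description), hence a nonempty irreducible open subset of $\C^k$; this yields surjectivity with connected fibers on closed points.

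The main subtlety to check carefully is the dimension identity $\dim\bigl(\bigcap_{F\supset\eta'}W_F\bigr)=k$ together with the claim that no $W_F$ with $F\not\supset\eta'$ contains $W_{\eta'}$: both rest on the flat-intersection principle for matroids, and together they ensure that $\mathcal{W}$ really has rank $k$ and that each fiber of $\varphi_{\sQ,\sQ'}$ is dense in its ambient affine space.
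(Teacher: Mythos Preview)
Your approach is correct and essentially the same as the paper's: both exhibit $\Gr(\sQ;\F)$ as an open subscheme of $\Gr(\sQ';\F)\times\mathbb{A}^k$ (you phrase this geometrically via the subbundle $\mathcal{W}=W_{\eta'}$, the paper algebraically by showing $I_\sQ$ is the extension of $I_{\sQ'}$ via the cofactor expansion of Lemma~\ref{lem:expand}), and for $\F=\C$ both identify the fibers as nonempty hyperplane complements in $\C^k$. The step you defer with ``I would verify that $(V',v_a)\mapsto\mathrm{rowspan}[A'\,|\,v_a]$ defines an isomorphism $\mathcal{U}\to\Gr(\sQ;\F)$'' is precisely the content of the paper's Claim, so your outline is complete once that computation is written out.
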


\begin{lemma}
\label{lem:expand}
Define the $d\times n$ matrix of variables
    \begin{equation}
    \label{eq:A}
A = \begin{bmatrix}
    1 &0 &\cdots &0 &x_{11} &\cdots &x_{1,n-d-1} & y_1\\
    0 &1 &\cdots &0 &x_{21} &\cdots &x_{2,n-d-1} & y_2 \\
    \cdots &\cdots &\cdots &\cdots &\cdots &\cdots &\cdots & \cdots\\
    0 &0 &\cdots &1 &x_{d1} &\cdots &x_{d,n-d-1} & y_d
    \end{bmatrix}.
    \end{equation}
Given a sequence  $\lambda' = (i_1,\ldots,i_{d-1})$ of integers in $[n-1]$, we have 
\begin{equation*}
    A_{\lambda'\cup n} =  \sum_{b=1}^{d} A_{\lambda' \cup b}\, y_{b}.
\end{equation*}
\end{lemma}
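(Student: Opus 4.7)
The plan is to apply Laplace expansion along the final column of the relevant $d \times d$ submatrix of $A$. Let $M$ denote the $d \times (d-1)$ matrix whose columns are the columns of $A$ indexed by $\lambda' = (i_1, \ldots, i_{d-1})$, taken in the given order. Since $\lambda' \cup n$ appends $n$ at the end, $A_{\lambda' \cup n}$ is the determinant of the $d \times d$ matrix $[M \mid y]$, where $y = (y_1, \ldots, y_d)^T$ is the $n$-th column of $A$. For each $b \in [d]$, the $b$-th column of $A$ is the standard basis vector $e_b$, so $A_{\lambda' \cup b} = \det([M \mid e_b])$.

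Expanding $\det([M \mid y])$ along its last column gives
$$A_{\lambda' \cup n} = \sum_{b=1}^{d} (-1)^{b+d}\, y_b\, \det(M_{\hat{b}}),$$
where $M_{\hat{b}}$ denotes $M$ with its $b$-th row removed. The same cofactor expansion applied to $[M \mid e_b]$, whose last column has a single nonzero entry in position $b$, yields $A_{\lambda' \cup b} = (-1)^{b+d}\, \det(M_{\hat{b}})$. Substituting this identity into the display produces the claim.

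The only step that requires a moment's care is the degenerate case $b \in \{i_1, \ldots, i_{d-1}\}$. Then $\lambda' \cup b$ has a repeated entry, so $A_{\lambda' \cup b} = 0$ by the convention from \S\ref{sec:grassmannian}; correspondingly the matrix $[M \mid e_b]$ has two equal columns, so its determinant vanishes as well, keeping the formula consistent. I foresee no real obstacle here: the lemma reduces to a standard Laplace expansion, and the only bookkeeping is to verify that the sign $(-1)^{b+d}$ produced by the cofactor expansion matches the tuple-ordering convention used to define $A_\lambda$.
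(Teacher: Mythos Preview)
Your proof is correct. It takes a different route from the paper: the paper derives the identity from the quadratic Pl\"ucker relation \eqref{eq:pluecker-ideal-gens} specialized to $(j_1,\ldots,j_{d+1}) = (1,\ldots,d,n)$, together with the observation that $y_b = (-1)^{d-b} A_{[d]\setminus b \cup n}$. Your Laplace expansion along the last column is more elementary and self-contained; it verifies the identity directly from the definition of $A_\lambda$ without appealing to the Pl\"ucker ideal. The paper's route, by contrast, makes explicit that the lemma is a specialization of a general Pl\"ucker relation, which is conceptually useful later since those same relations are invoked in the proof of Proposition~\ref{prop:2planes}. Either argument is fully adequate for this lemma.
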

\noindent This lemma follows from the  quadratic Pl\"ucker relation in \eqref{eq:pluecker-ideal-gens} where  $(j_1,\ldots, j_{d}, j_{d+1}) = (1,\ldots,d,n)$, and the formula
\begin{equation*}
y_{b} = A_{1,\ldots,b-1,n,b+1,\ldots,d} = (-1)^{d-b}A_{1,\ldots,b-1,b+1,\ldots,d,n} 
\quad \text{ for } \quad 1\leq b\leq d.    
\end{equation*}

\begin{proof}[Proof of Proposition \ref{prop:principalExtension}]
Let $\eta'$ be the flat of $\sQ^{\prime}$ such that $\sQ = \sQ^{\prime}+_{\eta'} a$. 
We show that we have a commutative diagram
\begin{equation*}
    \begin{tikzcd}
 \Gr(\sQ;\F)\arrow[dr, "\varphi_{\sQ,\sQ'}"'] \arrow[r, hook, "\iota"] &\Gr(\sQ';\F) \times (\G_{m})^k \arrow[d, "\pr_1"] 
\\
& \Gr(\sQ';\F) 
\end{tikzcd}
\end{equation*}
such that $\iota$ is an open affine immersion, $\pr_1$ is the projection onto the first factor, and $k$ is the rank of $\eta$.  After applying a suitable isomorphism, we may assume that $E=[n]$ is the ground set of $\sQ$, $a=n$, $[d]$ is a basis of $\sQ'$, and $[k] = \eta' \cap [d]$.  Let $A$ be the matrix from Formula \eqref{eq:A}, where we take $[d]$ to be our reference basis. Throughout, let $A_{\lambda}(\sQ) = \pi_{\sQ}(A_{\lambda})$ where $\pi_{\sQ}$ is the ring map from Formula \eqref{eq:piQ}.

\medskip

\noindent \textbf{Claim:} If $\lambda' \in  \binom{[n-1]}{d}$ such that $\lambda' \cup n \notin \pB(\sQ)$, then $A_{\lambda' \cup n}(\sQ) \in I_{\sQ'} \cdot B_{\sQ}$. 

\medskip

\noindent Suppose $\lambda'$ and $n$ are as in the claim. Since $([d]\setminus \ell) \cup n \notin \pB(\sQ)$ for $k<\ell\leq d$, we have $\pi_{\sQ}(y_\ell) = 0$, and hence 
\begin{equation*}
    A_{\lambda' \cup n}(\sQ) =  \sum_{b\in [k]\setminus \lambda'}  A_{\lambda'\cup b}(\sQ') \, y_{b}
\end{equation*}
by Lemma \ref{lem:expand}. Because $\lambda' \cup n$ is not a basis of $\sQ$, the set $\lambda'\cup b$ (for $b \in [k]\setminus \lambda'$) is not a basis of $\sQ'$. So $A_{\lambda'\cup n}(\sQ) \in I_{\sQ'} \cdot B_{\sQ}$, as required. 

As a consequence of this claim, the ideal $I_{\sQ}$ is the extension of $I_{\sQ'} \subset B_{\sQ'}$ to $B_{\sQ}$. So  $ U_{\sQ}^{-1}(S_{\sQ'}\otimes \F[y_1^{\pm}, \ldots, y_{k}^{\pm}]) \to S_{\sQ}$ is an isomorphism, and hence we have an open immersion $\iota:\Gr(\sQ;\F) \hookrightarrow \Gr(\sQ';\F)\times (\G_m)^{k}$. This implies that $\varphi_{\sQ,\sQ'}$ is smooth and its nonempty fibers are connected. 

Finally, for $\F=\C$, we must show that $\varphi_{\sQ,\sQ'}$ is surjective on closed points.   Let $V'\subset \C^{n-1}$ be a linear subspace with $\sQ(V') = \sQ'$ and $A'$ be a $\C$-valued, $d\times (n-1)$--matrix whose row span is $V'$. Choose $y_1,\ldots,y_k$ algebraically independent over $\Q$. Because the polynomials $f\in S_{\sQ}$ have $\Z$-coefficients, we have that $f(x_{ij},y_{\ell})\neq 0$ for all $f\in U_{\sQ}$. Thus, with $y = [y_1,\ldots,y_k,0,\ldots,0]^T \in \C^{d}$, the row span $V$ of $[A'|y]$ is a $\C$-realization of $\sQ$ and $\varphi_{\sQ,\sQ'}(V) = V'$. 
\end{proof}

Just as principal extension reverses matroid deletion, principal coextension reverses matroid contraction. Let $\sQ^{\prime}$ be a matroid with ground set $E'$ and $\eta'$ a flat of $(\sQ^{\prime})^{\vee}$. The \textit{principal coextension} of $\sQ'$ by $a$ into $\eta'$ is the matroid $\sQ$ with ground set $E=E^{\prime} \sqcup a$ defined by 
\begin{equation*}
    \sQ = (\pe{(\sQ')^{\vee}}{\eta'}{a})^{\vee}.
\end{equation*}
When $\eta'=E'$, this is called a \textit{free coextension}, and the rank of $\sQ$ is one more than that of $\sQ'$.  Define the morphism $\varphi_{\sQ,\sQ'}:\Gr(\sQ;\F) \to \Gr(\sQ';\F)$ by the composition
\begin{equation*}
    \Gr(\sQ;\F) \to \Gr(\pe{(\sQ')^{\vee}}{\eta'}{a};\F) \to \Gr((\sQ')^{\vee};\F) \to \Gr(\sQ';\F) 
\end{equation*}
where the first and third morphisms are the duality isomorphism in Proposition \ref{prop:duality}, and the middle one is the morphism in Proposition \ref{prop:principalExtension}. The following is an immediate consequence of these two propositions.

\begin{proposition}\label{prop:coextension}
    Let $\sQ,\sQ'$ be $\F$-realizable matroids such that $\sQ$ is a principal coextension of $\sQ'$. Then the morphism 
    \begin{equation*}
        \Gr(\sQ;\F) \to \Gr(\sQ';\F)
    \end{equation*}
    is smooth. If $\F = \C$, then this morphism is also surjective with connected fibers. 
\end{proposition}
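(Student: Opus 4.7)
The plan is straightforward: simply unwind the three-step composition defining $\varphi_{\sQ,\sQ'}$ and inherit each claimed property from its corresponding factor. Writing the composition as
\begin{equation*}
\Gr(\sQ;\F) \xrightarrow{\alpha} \Gr(\pe{(\sQ')^{\vee}}{\eta'}{a};\F) \xrightarrow{\beta} \Gr((\sQ')^{\vee};\F) \xrightarrow{\gamma} \Gr(\sQ';\F),
\end{equation*}
the outer maps $\alpha$ and $\gamma$ are the duality isomorphisms from Proposition \ref{prop:duality}, and the middle map $\beta$ is the principal extension morphism from Proposition \ref{prop:principalExtension}.

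First I would check that all four strata in this chain are well-defined, i.e., that each matroid involved is $\F$-realizable. The matroid $\sQ'$ is $\F$-realizable by hypothesis, hence so is $(\sQ')^{\vee}$ by Proposition \ref{prop:duality}; and $\sQ$ is $\F$-realizable by hypothesis, hence so is its dual $\pe{(\sQ')^{\vee}}{\eta'}{a}$, again by Proposition \ref{prop:duality}. In particular both hypotheses of Proposition \ref{prop:principalExtension} are met by $\beta$. Smoothness of $\varphi_{\sQ,\sQ'}$ then follows at once: $\alpha$ and $\gamma$ are isomorphisms (so smooth), $\beta$ is smooth by Proposition \ref{prop:principalExtension}, and smooth morphisms compose.

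For the case $\F = \C$, each of $\alpha$ and $\gamma$ is a bijection on closed points with singleton (hence connected) fibers, while $\beta$ is surjective on closed points with connected fibers by Proposition \ref{prop:principalExtension}. Surjectivity is preserved under composition, and the fiber of $\varphi_{\sQ,\sQ'}$ over a closed point $V' \in \Gr(\sQ';\C)$ is carried bijectively by $\alpha$ onto the fiber of $\beta$ over $\gamma^{-1}(V')$, which is connected. I do not foresee any real obstacle; the proposition is essentially bookkeeping that combines Propositions \ref{prop:duality} and \ref{prop:principalExtension}.
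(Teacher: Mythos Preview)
Your proposal is correct and matches the paper's approach exactly: the paper states only that the proposition ``is an immediate consequence of these two propositions'' (Propositions \ref{prop:duality} and \ref{prop:principalExtension}), and your argument simply spells out that immediacy by tracking smoothness, surjectivity, and connected fibers through the defining three-fold composition.
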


Recall that a hyperplane of a rank $d$ matroid is a flat of rank $d-1$. 
We denote the set of all hyperplanes of $\sQ$ by $\pH(\sQ)$. 
Given $a\in E$, set 
\begin{equation*}
        \pZ^1(\sQ,a) = \{ H \in \pH(\sQ) \, : \, a\in H \text{ and }\, a \text{ is not a coloop of } \sQ_{|H} \}.
\end{equation*}

\begin{proposition}
\label{prop:2planes}
    Let $\sQ$ be a $\F$-realizable,  connected, rank $d\geq 3$ matroid on $E$, and let $a\in E$.   If $\pZ^1(\sQ,a)$
    has at most 2 elements, then the morphism $\Gr(\sQ;\F) \to \Gr(\sQ\setminus a;\F)$ is smooth. If $\F = \C$ then, over closed points, this morphism is surjective with connected fibers. 
\end{proposition}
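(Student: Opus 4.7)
The plan is to adapt the proof of Proposition \ref{prop:principalExtension}, realizing $\Gr(\sQ;\F)$ as an open subscheme of $\Gr(\sQ\setminus a;\F) \times \mathbb{A}^{d-k}$, where $k = |\pZ^1(\sQ,a)| \leq 2$. Because $\sQ$ is connected and $d \geq 3$, the element $a$ is not a coloop, so there is a basis of $\sQ$ avoiding $a$; after relabeling, take $a = n$ and this basis to be $\lambda_0 = [d]$, write $\sQ' = \sQ\setminus n$, and use the matrix $A$ from Lemma \ref{lem:expand} with last column $(y_1,\dots,y_d)^{T}$. Setting
\begin{equation*}
M = \{\ell \in [d] : ([d]\setminus \ell)\cup n \in \pB(\sQ)\},
\end{equation*}
one has $\pi_\sQ(y_\ell) = 0$ precisely when $\ell \notin M$. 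The generators of $I_\sQ$ with $n \notin \lambda$ generate $I_{\sQ'} \cdot B_\sQ$, while those of the form $A_{\lambda'\cup n}$ with $\lambda' \in \binom{[n-1]}{d-1}$ expand by Lemma \ref{lem:expand} as $\sum_{b \in M} A_{\lambda'\cup b}(\sQ)\, y_b$, a linear form in $y$.

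This linear form vanishes modulo $I_{\sQ'}$ whenever $\lambda'$ is dependent in $\sQ'$; when $\lambda'$ is independent, its closure $\cl_\sQ(\lambda')$ is a hyperplane $H$ of $\sQ$ containing $n$ with $\lambda' \subseteq H\setminus n$ of rank $d-1$, and hence $H \in \pZ^1(\sQ,n)$. The contribution of each $H \in \pZ^1(\sQ,n)$ is then analyzed in two cases: if $|[d] \cap H| = d-1$, say $[d] \cap H = [d]\setminus \ell$, the constraint imposed by $H$ is the built-in relation $y_\ell = 0$ coming from $\ell \notin M$; otherwise, the constraint arises as a genuine linear form in $y$ obtained from any basis $\lambda'$ of $\sQ_{|H\setminus n}$. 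The crucial step, which I expect to be the main obstacle, is to verify that two bases $\lambda'_1, \lambda'_2$ of $\sQ_{|H\setminus n}$ yield linear forms in $y$ that agree up to multiplication by a unit in $U_\sQ^{-1} S_{\sQ'}$: geometrically both encode the single condition that $v_n$ lies in the span of $\{v_i : i \in H\setminus n\}$, but algebraically the unit scalar must be extracted as a ratio of minors $A_\mu(\sQ')$ with $\mu \in \pB(\sQ')$ via a careful application of the quadratic Pl\"ucker relations \eqref{eq:pluecker-ideal-gens}.

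Once this step is in hand, each $H \in \pZ^1(\sQ,n)$ contributes exactly one linear constraint on $y$, and since distinct matroid hyperplanes of $\sQ'$ produce distinct $(d{-}1)$-dimensional subspaces in every realization, the $k \leq 2$ constraints are independent over $U_\sQ^{-1}S_{\sQ'}$. This yields an open immersion $\iota : \Gr(\sQ;\F) \hookrightarrow \Gr(\sQ';\F) \times \mathbb{A}^{d-k}_\F$ whose composition with the first projection is $\varphi_{\sQ,\sQ'}$, proving smoothness. For $\F = \C$, since $d \geq 3$ and $k \leq 2$ give $d-k \geq 1$, each fiber is a nonempty Zariski-open subset of $\C^{d-k}$ and is therefore connected; surjectivity onto closed points follows as at the end of the proof of Proposition \ref{prop:principalExtension}, by choosing the residual $y$-coordinates algebraically independent over $\Q$ to avoid the finitely many open conditions in $U_\sQ$.
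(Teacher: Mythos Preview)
Your outline follows the same broad strategy as the paper---realize $\Gr(\sQ;\F)$ as an open subscheme of $\Gr(\sQ';\F)$ times an affine space by analyzing the linear forms in the $y_b$ contributed by each $H\in\pZ^1(\sQ,n)$---but the paper buys itself a much cleaner argument through two moves you do not make. First, it invokes Lemma~\ref{lem:01planes} to dispose of the cases $|\pZ^1(\sQ,a)|\leq 1$ and $|\pZ^1(\sQ,a)|=2$ with $\rho_\sQ(H_1\cap H_2\setminus a)=d-2$ as principal extensions, so only one residual case remains. Second, in that case it \emph{chooses the reference basis} so that $[d-1]\subset H_1$ and $d-1\notin H_2$; this forces $y_d=0$ automatically (the $H_1$-constraint is built in) and guarantees that the coefficient of $y_{d-1}$ in the $H_2$-constraint is the unit $A_{\mu\cup(d-1)}$, so one solves for $y_{d-1}$ directly and there is never a pair of ``genuine'' linear forms whose independence must be checked.

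Your uniform approach, by contrast, can land in the situation where both $H_1$ and $H_2$ satisfy $|[d]\cap H_i|<d-1$, and then you must eliminate \emph{two} $y$-variables from two linear forms. Your justification that the constraints are independent (``distinct matroid hyperplanes produce distinct $(d{-}1)$-subspaces'') is geometric intuition over each closed point, not an algebraic proof: what is needed is a specific $2\times 2$ minor of the coefficient matrix $[A_{\mu_i\cup b}]$ that is a \emph{unit} in $U_\sQ^{-1}S_{\sQ'}$, and you have not exhibited one. This is the genuine gap in your plan; it can be filled, but the paper's basis choice sidesteps it entirely. Your ``crucial step'' (that two bases of $\sQ_{|H\setminus n}$ give proportional linear forms modulo $I_{\sQ'}$) is correct and is essentially what the paper verifies for $H_2$ via the Pl\"ucker relation argument.

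A separate caution on surjectivity: your claim that each fiber is a \emph{nonempty} open subset of $\C^{d-k}$ is not justified by choosing residual $y$-coordinates algebraically independent over $\Q$, because after eliminating $y$-variables the elements of $U_\sQ$ become polynomials whose coefficients lie in $S_{\sQ'}$ (not in $\Z$), and these can vanish identically over a given $V'\in\Gr(\sQ';\C)$. Concretely, for $d=3$ with $\pL(\sQ)=\{\{1,2,7\},\{3,4,7\}\}$ on $[7]$, one has $\sQ'=\sU_{3,6}$, yet any realization of $\sU_{3,6}$ in which the lines $\overline{12},\overline{34},\overline{56}$ are concurrent admits no extension to a realization of $\sQ$ (the forced point $v_7=\overline{12}\cap\overline{34}$ lies on $\overline{56}$, violating $\{5,6,7\}\in\pB(\sQ)$). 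The paper's own proof also does not address surjectivity beyond the principal-extension cases, so this is a subtlety in the statement rather than a defect particular to your plan; but your argument for it does not go through as written.
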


\noindent 

 When $|\pZ^1(\sQ,a)|\leq 1$, Proposition \ref{prop:principalExtension} applies, see Lemma \ref{lem:01planes} below. If $\pZ^1(\sQ,a)=\{H_1,H_2\}$ but $|H_1\cap H_2|$ has too few elements, then Proposition \ref{prop:principalExtension} does not apply. Nevertheless, the idea is similar. The projective realizations of $\sQ$ are obtained by freely placing, in a projective realization of $\sQ'$, a point on the codimension-2 subspace lying at the intersection of the hyperplanes spanned by $H_1$ and $H_2$ (outside of finitely many hypersurfaces). So $\Gr(\sQ;\F)$ is isomorphic to an open subscheme of $\Gr(\sQ;\F)\times (\G_{m})^{d-2}$. The bound $|\pZ^1(\sQ,a)|\leq 2$ is essential, see Example \ref{ex:morphismNotSmooth}. 

Before proving this proposition, we introduce the following lemma that will assist in detecting when $\sQ$ is a principal extension of $\sQ\setminus a$ by $a$. Its proof will  make use of the closure operator $\cl_{\sQ}$ of the matroid $\sQ$. Given a subset $A\subset E$, its \textit{closure} $\cl_{\sQ}(A)$ is the smallest flat containing $A$.  See \cite[Chapter 1]{Oxley} for details.

\begin{lemma}
\label{lem:01planes}
    Let $\sQ$ be a rank-$d$ matroid, let $\eta$ be a nonempty flat of $\sQ$, and let $a\in \eta$ such that $a$ is not a coloop of $\sQ_{|\eta}$. Set $\eta' = \eta\setminus a$ and $\sQ' = \sQ\setminus a$. 
    \begin{enumerate}
        \item We have $\pB(\sQ) \subset \pB(\pe{\sQ'}{\eta'}{a})$.
        \item If $\pZ^1(\sQ_{|\eta},a) = \emptyset$ then $\sQ = \pe{\sQ'}{\eta'}{a}$.
        \item If $\pZ^1(\sQ,a)$ consists of $r$ distinct hyperplanes $H_1,\ldots, H_{r}$ (for some $r\geq 1$) and, with $\eta = H_1\cap \cdots \cap H_r$, we have
        \begin{equation*}
            \rho_{\sQ}(H_1\cap \cdots \cap H_r\setminus a) = \rho_{\sQ}(H_1\cap \cdots \cap H_r) = d-r,
        \end{equation*}
        then $\sQ = \pe{\sQ'}{\eta'}{a}$; here $\rho_{\sQ}$ is the rank function of $\sQ$.
    \end{enumerate}
\end{lemma}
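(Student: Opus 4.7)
The plan is to prove the three parts by careful analysis of the closure operator $\cl_\sQ$, exploiting the hypothesis that $a$ is not a coloop of $\sQ_{|\eta}$, which is equivalent to $\rho_\sQ(\eta') = \rho_\sQ(\eta)$.

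For part (1), I would take $\lambda \in \pB(\sQ)$. If $a \notin \lambda$ then $\lambda \in \pB(\sQ') \subset \pB(\pe{\sQ'}{\eta'}{a})$ trivially, so assume $\lambda = \mu \cup a$ and let $H = \cl_\sQ(\mu)$. This is a hyperplane of $\sQ$ that does not contain $a$, and the task reduces to finding $b \in \eta' \setminus H$; for then $\mu \cup b \in \pB(\sQ')$ and $\lambda = ((\mu \cup b) \setminus b)\cup a$ has the form required by the definition of $\pe{\sQ'}{\eta'}{a}$. If, on the contrary, $\eta' \subset H$, then $\cl_\sQ(\eta') \subset H$, so $a \notin \cl_\sQ(\eta')$, forcing $\rho_\sQ(\eta) = \rho_\sQ(\eta') + 1$ and contradicting the non-coloop hypothesis.

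For part (2), I need the reverse inclusion, so I would let $\mu = \lambda' \setminus b$ with $\lambda' \in \pB(\sQ')$ and $b \in \lambda' \cap \eta'$, and aim to prove $\mu \cup a \in \pB(\sQ)$, i.e., $a \notin \cl_\sQ(\mu)$. Suppose toward contradiction that $a \in H := \cl_\sQ(\mu)$. Since $\mu \subset H \setminus a$ has rank $d - 1 = \rho_\sQ(H)$, the element $a$ is not a coloop of $\sQ_{|H}$, so $H \in \pZ^1(\sQ,a)$. The intersection $H \cap \eta$ is then a flat of $\sQ_{|\eta}$ containing $a$, and because $b \in \eta$ yet $b \notin H$ (as $\mu \cup b$ is a basis), this flat is properly contained in $\eta$. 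Extending $H \cap \eta$ to a hyperplane $H_1$ of $\sQ_{|\eta}$, the hypothesis $\pZ^1(\sQ_{|\eta}, a) = \emptyset$ forces $a$ to be a coloop of $\sQ_{|H_1}$, and the desired contradiction should emerge from comparing the ranks of $\mu$, $H_1$, and $H \cap H_1$ relative to the basis $\mu \cup b$.

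For part (3), the same opening moves yield $H \in \pZ^1(\sQ, a) = \{H_1, \ldots, H_r\}$, so $H = H_j$ for some $j$. The intersection structure is now decisive: by construction $\eta = H_1 \cap \cdots \cap H_r \subset H_j = H$, so $b \in \eta' \subset H$, contradicting $b \notin \cl_\sQ(\mu) = H$. The rank hypothesis $\rho_\sQ(\eta \setminus a) = \rho_\sQ(\eta) = d - r$ plays a double role: it guarantees that $\eta$ is a nonempty flat of the expected codimension and that $a$ is not a coloop of $\sQ_{|\eta}$, so that part (1) supplies the forward inclusion.

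The hard part will be (2): the hypothesis $\pZ^1(\sQ_{|\eta}, a) = \emptyset$ only constrains hyperplanes of the restriction $\sQ_{|\eta}$, and one must lift this into a statement about hyperplanes of the ambient matroid $\sQ$ that contain $a$ and can be spanned by an independent set $\mu \subset E'$. In contrast, (1) is a direct basis-exchange argument anchored at the hyperplane $\cl_\sQ(\mu)$, and (3) is almost immediate from the intersection description of $\eta$.
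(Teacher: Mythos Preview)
Your arguments for parts (1) and (3) are correct. Part (1) is the paper's proof stated directly rather than by contrapositive. Part (3), however, is genuinely different from and cleaner than the paper's: the paper reduces (3) to (2) by first verifying $\pZ^1(\sQ_{|\eta},a)=\emptyset$ via a separate hyperplane argument, whereas you bypass (2) entirely. Once $H=\cl_\sQ(\mu)\in\pZ^1(\sQ,a)=\{H_1,\ldots,H_r\}$, the containment $\eta=\bigcap_i H_i\subset H$ forces $b\in H$, contradicting $b\notin\cl_\sQ(\mu)$. Your route is shorter and, as you observe, does not actually use the full strength of $\rho_\sQ(\eta)=d-r$.

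The gap is in part (2). You correctly reach the point where $a$ would have to be a coloop of $\sQ_{|H_1}$, but ``the desired contradiction should emerge from comparing the ranks'' is not a proof. Here is what is missing. First, $H\cap\eta$ is already a hyperplane of $\sQ_{|\eta}$: submodularity gives $\rho_\sQ(H\cap\eta)\geq\rho_\sQ(H)+\rho_\sQ(\eta)-d=\rho_\sQ(\eta)-1$, and proper containment in $\eta$ gives the reverse inequality; so in fact $H_1=H\cap\eta$. Second, applying submodularity to $H$ and $\eta'=\eta\setminus a$, and using $\rho_\sQ(\eta')=\rho_\sQ(\eta)$, yields $\rho_\sQ((H\cap\eta)\setminus a)=\rho_\sQ(H\cap\eta')\geq\rho_\sQ(\eta)-1=\rho_\sQ(H\cap\eta)$, so $a$ is \emph{not} a coloop of $\sQ_{|H\cap\eta}$. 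This places $H\cap\eta\in\pZ^1(\sQ_{|\eta},a)$, contradicting the hypothesis directly. The paper's proof asserts this non-coloop step without spelling out the submodularity, so the point you flagged as ``hard'' is real but small once seen.
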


\begin{proof}
We prove statement (1) in the following form: if $\pB(\sQ) \not\subset \pB(\pe{\sQ'}{\eta'}{a})$  then $a$ is a coloop of $\sQ_{|\eta}$. To that end, suppose $\lambda\in \pB(\sQ)$ contains $a$ such that $(\lambda \setminus a) \cup b$ is not a basis of $\sQ'$ for all $b\in \eta'$. 
The set  $H = \cl_{\sQ}(\lambda \setminus a)$ is a hyperplane of $\sQ$ that contains $\eta'$ and does not contain $a$. Since  $H\cap \eta$ is a flat of $\sQ_{|\eta}$ and $\eta'=H\cap \eta$,  we have that $a$ is a coloop of $\sQ_{|\eta}$.

Next consider statement (2). Suppose $\sQ\neq \pe{\sQ'}{\eta'}{a}$. By part (1), there is a  $\lambda'\in \pB(\sQ')$ and $b\in \lambda'\cap \eta'$ such that $\lambda = (\lambda'\setminus b) \cup a \in \pB(\pe{\sQ'}{\eta'}{a})\setminus \pB(\sQ)$. So $H = \cl_{\sQ}(\lambda)$ is a hyperplane of $\sQ$ that contains $a$ and does not contain $b$. The intersection $H\cap \eta$ is a flat of $\sQ_{|\eta}$ and $H\cap \eta \neq \eta$. Therefore, there is a hyperplane $F$ of $\sQ_{|\eta}$ containing $H\cap \eta$. As $a$ is contained in $F$ and is not a coloop of $\sQ_{|F}$, we have that $F\in \pZ^{1}(\sQ_{\eta},a)$, as required. 

Finally consider statement (3). Clearly $a$ is not a coloop of $\sQ_{|\eta}$. Let $F$ be a hyperplane of $\sQ_{|\eta}$, i.e., $F$ is a flat of $\sQ$ contained in $\eta$ and has rank $d-r-1$, and suppose $a\in F$. By \cite[Proposition 1.7.8]{Oxley} there is a hyperplane $H_{r+1}$ of $\sQ$ different from $H_1,\ldots,H_r$ such that $F \subset H_{r+1}$. By the hypothesis on $\pZ^{1}(\sQ,a)$, we have that $a$ is a coloop of $\sQ_{|H_{r+1}}$, and so $a$ is a coloop of $\sQ_{|F}$. Therefore $\pZ(\sQ_{|\eta},a) = \emptyset$, and so $\sQ = \pe{\sQ'}{\eta'}{a}$ by part (2). 
\end{proof}

\begin{proof}[Proof of Proposition \ref{prop:2planes}]
By Lemma \ref{lem:01planes}, if $|\pZ^1(\sQ,a)|\leq 1$, or if $\pZ^1(\sQ,a) = \{H_1,H_2\}$ such that $\rho_{\sQ}(H_1\cap H_2\setminus a) = d-2$, then $\sQ$ is a principal extension, and hence Proposition \ref{prop:principalExtension} applies. 
So we may assume that $\pZ^1(\sQ,a) = \{H_1, H_2\}$ such  $\rho_{\sQ}(H_1\cap H_2) < d-2$. Since $\sQ$ is connected, it has no coloops, and therefore there is a basis of $\sQ$ that does not contain $a$.  After applying a suitable isomorphism, we may assume that $E=[n]$, $a=n$,  $[d]$ is a basis of $\sQ$, $[d-1]\subset H_1$, and $d-1\notin H_2$. 
    
Let $A$ be the matrix from Formula \eqref{eq:A}. As in the proof of Proposition \ref{prop:principalExtension}, let $A_{\lambda}(\sQ) = \pi_{\sQ}(A_{\lambda})$ where $\pi_{\sQ}$ is the ring map from Formula \eqref{eq:piQ}. Because $[d-1] \cup n \subset H_1$, it is not a basis of $\sQ$, so $\pi_{\sQ}(y_d) = 0$. 
If $1\leq k \leq d-1$, then $\pi_{\sQ}(y_k) = y_k$ because $[d]\setminus k \cup n$ is a basis of $\sQ$. Indeed, if  $ [d]\setminus k \cup n$ is not a basis of $\sQ$, then is closure is a hyperplane in $\pZ^{1}(\sQ,a)\setminus \{H_1\}$, so it must be $H_2$, and hence $[d-1]\setminus k \subset H_1\cap H_2$. This contradicts the hypothesis $\rho_{\sQ}(H_1\cap H_2) < d-2$.

Fix a $(d-1)$--element independent set $\mu\subset H_2\setminus n$. By Lemma \ref{lem:expand}  we have
\begin{equation*}
A_{\mu \cup n}(\sQ) = \sum_{b=1}^{d-1} A_{\mu \cup b}(\sQ)\, y_b \equiv 0 \mod I_{\sQ}.
\end{equation*}
Since $d-1\notin H_2$, the set $\mu \cup (d-1)$ is a basis of $\sQ$.
Solving for $y_{d-1}$ yields the following rational function 
\begin{equation*}
    g = \frac{-1}{A_{\mu \cup (d-1)}(\sQ)} \left( 
    \sum_{b=1}^{d-2} A_{\mu \cup b}(\sQ) \, y_{b} 
    \right).
\end{equation*}
\noindent Let $U$ be the multiplicative semigroup of $S_{\sQ'} \otimes \C[y_1^{\pm},\ldots, y_{d-1}^{\pm}]$ defined by 
\begin{equation*}
    U = \langle f(x_{ij},y_1,\ldots,y_{d-2},g)\, : \, f\in U_{\sQ} \rangle_{\smgp}.
\end{equation*}    
Let $\sQ' = \sQ_{|[n-1]}$.  The semigroup $U_{\sQ'}$ is contained in $U$. Define a ring homomorphism
\begin{gather*}
    \phi: U_{\sQ}^{-1}B_{\sQ} \to U^{-1}(B_{\sQ'} \otimes \C[y_{1}^{\pm}, \ldots, y_{d-2}^{\pm}]) \\
    \phi(x_{ij}) = x_{ij}, \hspace{10pt} \phi(y_1) = y_1, \ldots,  \phi(y_{d-2}) = y_{d-2},  \hspace{10pt} \phi(y_{d-1}) = g.
\end{gather*}
\noindent Now let $\lambda$ be any $(d-1)$--element subset of $H_2\setminus n$. Using Lemma \ref{lem:expand} and the formula $\phi(y_{d-1}) = g$, we have
\begin{equation*}
\phi(A_{\lambda\cup n}(\sQ)) 
= \frac{1}{A_{\mu \cup (d-1)}(\sQ)} 
\sum_{b=1}^{d-2} (
A_{\lambda \cup b}(\sQ) A_{\mu\cup (d-1)}(\sQ)  
- A_{\lambda\cup (d-1)}(\sQ) A_{\mu \cup b}(\sQ) )y_{b}
\end{equation*}
By the quadratic Pl\"ucker relations as in \eqref{eq:pluecker-ideal-gens},  we have
\begin{equation*}
    A_{\lambda \cup b}(\sQ)A_{\mu\cup (d-1)}(\sQ) - A_{\lambda \cup (d-1)}(\sQ) A_{\mu \cup b}(\sQ) \equiv 
    \sum_{m\in \mu} \pm  A_{\lambda \cup m}(\sQ) A_{\mu \cup (b,d-1)\setminus m}(\sQ) \mod I_{\sQ}.
\end{equation*}
Each  $\lambda \cup m$ for $m\in \mu$ is contained in $H_{2}\setminus n$, and hence not a basis of $\sQ'$. This means that the expression on the right lies in $I_{\sQ'}$.  We conclude that $\phi(A_{\lambda \cup n}(\sQ))$ lies in the extension of $I_{\sQ'}$ to $U^{-1}(B_{\sQ'} \otimes \C[y_1^{\pm},\ldots,y_{d-2}^{\pm}])$. Therefore, the morphism $\phi$ descends to a morphism on ring quotients
    \begin{equation*}
        \phi:S_{\sQ} \to U^{-1}(S_{\sQ'} \otimes \C[y_1^{\pm},\ldots,y_{d-2}^{\pm}])
    \end{equation*}
The map $\phi$ is a partial inverse to the inclusion morphism $S_{\sQ'}\otimes \C[y_1^{\pm},\ldots,y_{d-2}^{\pm}] \to S_{\sQ}$. This proves that $\Gr(\sQ;\C) \to \Gr(\sQ';\C)$ factors as an open immersion $\Gr(\sQ;\C)\hookrightarrow \Gr(\sQ';\C)\times (\G_m)^{d-2}$ followed by the projection $\Gr(\sQ';\C)\times (\G_m)^{d-2} \to \Gr(\sQ';\C)$, which is smooth.  
\end{proof}

\begin{corollary}\label{cor:principle_extension_smooth}
If $\sQ$ and $\sQ'$ are as in Proposition \ref{prop:principalExtension}, \ref{prop:coextension} or \ref{prop:2planes}, then  $\Gr(\sQ;\C)$ is smooth (resp. irreducible) if and only if $\Gr(\sQ';\C)$ is smooth (resp. irreducible).
\end{corollary}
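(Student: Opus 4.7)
The plan is to exploit the factorization of $\varphi_{\sQ,\sQ'}$ established within the proofs of Propositions \ref{prop:principalExtension} and \ref{prop:2planes}. In both cases, it is shown that $\varphi_{\sQ,\sQ'}$ factors as an open immersion $\iota:\Gr(\sQ;\C) \hookrightarrow \Gr(\sQ';\C) \times (\G_m)^k$ followed by projection onto the first factor, where $k\geq 0$ depends on the setting (namely, $k$ is the rank of $\eta$ in the principal extension case and $k = d-2$ in the setting of Proposition \ref{prop:2planes}). The case of Proposition \ref{prop:coextension} then reduces to the principal extension case via the scheme isomorphism $\Gr(\sQ;\C) \cong \Gr(\sQ^{\vee};\C)$ supplied by Proposition \ref{prop:duality}. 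With this factorization in hand, the corollary becomes a formal consequence.

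For the smoothness equivalence, one direction is immediate: if $\Gr(\sQ';\C)$ is smooth, then so is $\Gr(\sQ';\C) \times (\G_m)^k$, and hence so is any open subscheme, in particular $\Gr(\sQ;\C)$ via $\iota$. For the converse, assume $\Gr(\sQ;\C)$ is smooth and pick any closed point $V' \in \Gr(\sQ';\C)$. By surjectivity of $\varphi_{\sQ,\sQ'}$ on closed points, choose $V \in \Gr(\sQ;\C)$ with $\varphi_{\sQ,\sQ'}(V) = V'$. Via $\iota$, this $V$ corresponds to a closed point of $\Gr(\sQ';\C) \times (\G_m)^k$ at which the ambient product is smooth (since $\iota(\Gr(\sQ;\C))$ is open and smooth there). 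Because $(\G_m)^k$ is smooth, $\Gr(\sQ';\C)$ must be smooth at $V'$; as $\Gr(\sQ';\C)$ is of finite type over $\C$, smoothness at every closed point gives smoothness everywhere.

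For the irreducibility equivalence, if $\Gr(\sQ';\C)$ is irreducible then $\Gr(\sQ';\C) \times (\G_m)^k$ is irreducible (product of irreducible schemes over the algebraically closed field $\C$), and hence its nonempty open subscheme $\Gr(\sQ;\C)$ is irreducible. Conversely, if $\Gr(\sQ;\C)$ is irreducible, then its image under $\varphi_{\sQ,\sQ'}$ is irreducible; surjectivity on closed points, together with the fact that schemes of finite type over $\C$ are Jacobson, forces this image to equal $\Gr(\sQ';\C)$, so $\Gr(\sQ';\C)$ is irreducible. There is no substantive obstacle here: all the content lies in the earlier propositions, which already produced the open immersion $\iota$ and the surjectivity of $\varphi_{\sQ,\sQ'}$ on closed points; the corollary simply packages these into the desired equivalences.
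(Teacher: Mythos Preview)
Your proof is correct and follows precisely the approach the paper intends: the corollary is stated in the paper without proof, as an immediate consequence of the factorization $\Gr(\sQ;\C)\hookrightarrow \Gr(\sQ';\C)\times(\G_m)^k\to\Gr(\sQ';\C)$ established inside the proofs of Propositions~\ref{prop:principalExtension} and~\ref{prop:2planes} (with Proposition~\ref{prop:coextension} reducing to the former via duality). Your write-up supplies exactly the standard descent/transfer arguments the paper leaves to the reader.
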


\begin{figure}
    \centering
    \includegraphics[width=0.9\textwidth]{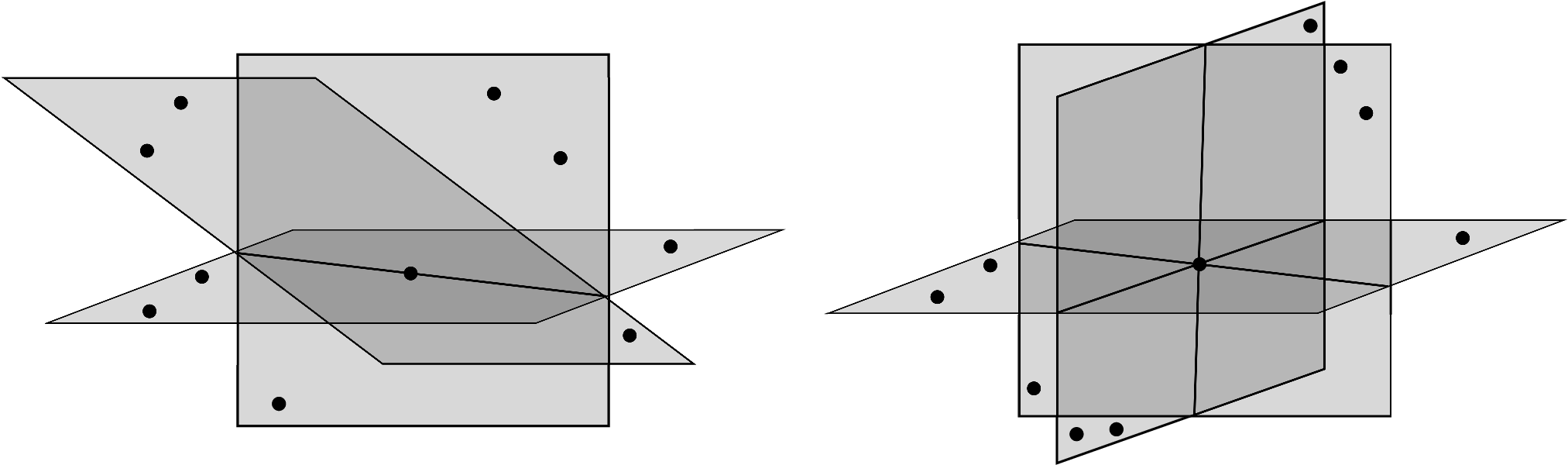}
    \caption{A special and a general configuration of points in $\P^{3}$ realizing the matroid from Example \ref{ex:morphismNotSmooth}}
    \label{fig:notSmoothMorphism}
\end{figure}

\begin{example}
\label{ex:morphismNotSmooth}
    Let $\sQ$ be the $(4,10)$--matroid whose bases are $\binom{[10]}{4}$ except for $H_1 = \{1,2,3,10\}$, $H_2 = \{4,5,6,10\}$, and $H_3 = \{7,8,9,10\}$, and let $\sQ' = \sQ\setminus \{10\}$; in this case $\pB(\sQ') = \binom{[9]}{4}$, i.e., $\sQ'$ is the uniform matroid on $[9]$, see \S \ref{sec:matroid_subdivisions} below.  We show that the morphism $\Gr(\sQ;\C) \to \Gr(\sQ';\C)$ is not smooth. Define the matrix of variables
    \begin{equation*}
    A = \begin{bmatrix}
            1 & 0 & 0 & 0 & x_{11} & x_{12} & x_{13} & x_{14} & x_{15} & y_{1} \\ 
            0 & 1 & 0 & 0 & x_{21} & x_{22} & x_{23} & x_{24} & x_{25} & y_{2} \\ 
            0 & 0 & 1 & 0 & x_{31} & x_{32} & x_{33} & x_{34} & x_{35} & y_{3} \\ 
            0 & 0 & 0 & 1 & x_{41} & x_{42} & x_{43} & x_{44} & x_{45} & 0
        \end{bmatrix}.
    \end{equation*}
    Using Lemma \ref{lem:expand}, we may express the ideal $I_{\sQ}$ as 
    \begin{align*}
    I_{\sQ} = \langle 
        A_{1456} \, y_1 + A_{2456} \, y_2 + A_{3456} \, y_3, \;
        A_{1789} \, y_1 + A_{2789} \, y_2 + A_{3789} \, y_3\rangle.
    \end{align*}
    Solving for $y_{3}$ using the first generator and performing this substitution into the second generator yields
    \begin{equation*}
        (A_{1789}A_{3456} -A_{1456}A_{3789}) \, \frac{y_1}{A_{3456}} + (A_{2789}A_{3456} -  A_{2456}A_{3789}) \,\frac{y_2}{A_{3456}} \in U_{\sQ}^{-1}I_{\sQ}.
    \end{equation*}
    Over the open set
    \begin{equation*}
    \{p_{1789}p_{3456} - p_{1456}p_{3789} \neq 0\}  \cup \{p_{2789}p_{3456} - p_{3789}p_{2456} \neq 0 \} 
    \end{equation*}
    of $\Gr(\sQ';\C)$ the morphism $\varphi_{\sQ,\sQ'}:\Gr(\sQ;\C) \to \Gr(\sQ';\C)$ has 1-dimensional fibers (since we may then eliminate either $y_1$ or $y_2$). Let $V \in \Gr(4,9)$ be the row span of the matrix
    \begin{equation*}
    B = \begin{bmatrix}
1& 0& 0& 0& 1& 2& -2& 3& 5 \\
0& 1& 0& 0& 1& 3& 4& -4& -5 \\
0& 0& 1& 0& 1& 4& 7& -14& -18 \\
0& 0& 0& 1& 1& 1& 1& 1& 1 
    \end{bmatrix}.
    \end{equation*}
    One readily verifies that all maximal minors of this matrix are nonzero, and hence $V\in \Gr(\sQ';\C)$. However, we have
    \begin{equation*}
        B_{1789}B_{3456} -B_{1456}B_{3789} = B_{2789}B_{3456} -  B_{2456}B_{3789} =  0
    \end{equation*}
    so the fiber of $\varphi$ over $V$ is 2-dimensional.
    Therefore, $\Gr(\sQ;\C) \to \Gr(\sQ';\C)$ is not flat, in particular, it is not smooth.  We can understand this geometrically in the following way. For a general point configuration in $\P^{3}$ realizing $\sQ$, the hyperplanes spanned by $H_1,H_2,$ and $H_3$ intersect at a single point. This is the right-side illustrated in Figure \ref{fig:notSmoothMorphism}. However, the hyperplanes are allowed to intersect along a line, this is the left-side illustration in this figure. 
\end{example}

\section{Rank 3 matroids}\label{sec:3n}

Let $\sQ$ be a simple $(3,n)$--matroid. A rank 2 flat of $\sQ$ with at least 3 elements is called a \textit{line}, and we denote the set of lines by $\pL(\sQ)$. As the remaining rank 2 flats are the 2-element subsets not contained in a line, $\sQ$ is uniquely determined by $\pL(\sQ)$.  The matroid $\sQ$ satisfies the \textit{3 lines property} if every element in the ground set is contained in at least 3 lines. The following Proposition is \cite[Lemma~3.2]{NazirYoshinaga} and \cite[Lemma~4.1]{CoreyGrassmannians}; it is a direct consequence of Proposition \ref{prop:2planes}.

\begin{proposition}\label{prop:3lines}
Suppose $\Gr(\sQ;\C)$ is smooth for all $\C$-realizable rank $3$ matroids $\sQ$ on fewer than $n$ elements. 
Then the matroid strata for all $\C$-realizable, $(3,n)$--matroids that are either not simple, not connected, or do not satisfy the 3 lines property are smooth. 
\end{proposition}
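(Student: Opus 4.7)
The plan is to induct on $n$ (the ground set size) and split into three cases based on which defect $\sQ$ has, reducing each case to a $\C$-realizable rank 3 matroid on strictly fewer elements, which is smooth by hypothesis. The main tools are Propositions \ref{prop:directsum}, \ref{prop:principalExtension}, and \ref{prop:2planes}, together with Corollary \ref{cor:principle_extension_smooth}.

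If $\sQ$ is not connected, I would write $\sQ = \sQ_1 \oplus \sQ_2$ with both summands on strictly smaller ground sets. Both $\sQ_i$ are $\C$-realizable since any $\C$-realization of $\sQ$ restricts to a realization on each summand. Proposition \ref{prop:directsum} then gives $\Gr(\sQ;\C) \cong \Gr(\sQ_1;\C) \times \Gr(\sQ_2;\C)$, and both factors are smooth by the inductive hypothesis, so the product is smooth.

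If $\sQ$ is not simple, I split into two subcases. When $\sQ$ has a loop $a$, I would observe $\sQ = (\sQ\setminus a) \oplus \sQ_{|\{a\}}$, where the second summand is the unique rank 0 matroid on $\{a\}$ whose realization space is a reduced point. This reduces immediately to the not-connected case. When $\sQ$ has a pair of parallel elements $a,b$, set $\sQ' = \sQ\setminus b$ and $\eta' = \cl_{\sQ'}(\{a\})$. A direct basis-by-basis check shows $\sQ = \pe{\sQ'}{\eta'}{b}$: the bases of $\sQ$ avoiding $b$ are exactly the bases of $\sQ'$, and those containing $b$ have the form $(\lambda' \setminus c) \cup b$ with $c \in \lambda' \cap \eta'$, using that elements parallel to $a$ are interchangeable. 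Then Proposition \ref{prop:principalExtension} and Corollary \ref{cor:principle_extension_smooth} transfer the smoothness of $\Gr(\sQ';\C)$, given by the inductive hypothesis, to $\Gr(\sQ;\C)$.

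The remaining case is that $\sQ$ is simple and connected but fails the 3 lines property, so some element $a \in E$ lies on at most 2 lines. I claim that $\pZ^1(\sQ,a)$ coincides with the set of lines through $a$, and hence has at most 2 elements. Since $\sQ$ is simple of rank 3, every hyperplane $H$ of $\sQ$ is a rank 2 flat that is either a line ($|H| \geq 3$) or a 2-element set $\{a,b\}$ with no third colinear element. If $H = \{a,b\}$, then $\sQ_{|H}$ has unique basis $\{a,b\}$, so $a$ is a coloop of $\sQ_{|H}$ and $H \notin \pZ^1(\sQ,a)$. If $H$ is a line through $a$, then $\sQ_{|H}$ is simple of rank 2 with $|H| \geq 3$, every pair in $H$ is a basis, and $a$ is not a coloop, so $H \in \pZ^1(\sQ,a)$. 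This establishes the claim, after which Proposition \ref{prop:2planes} and Corollary \ref{cor:principle_extension_smooth} reduce the question to the smoothness of $\Gr(\sQ\setminus a;\C)$, which is handled by the inductive hypothesis.

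The main technical step is the verification in the third case that $\pZ^1(\sQ,a)$ really equals the set of lines through $a$; this is the combinatorial content distinguishing rank 3 matroids from the general setting of Proposition \ref{prop:2planes}. Once this is in hand, each of the three cases is a routine application of the structural results already developed.
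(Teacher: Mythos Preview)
Your argument is correct and matches the paper's intended approach (the paper cites only Proposition~\ref{prop:2planes}, but the rank-$4$ analogue Proposition~\ref{prop:4planes} explicitly names Propositions~\ref{prop:directsum}, \ref{prop:principalExtension}, and \ref{prop:2planes}, exactly as you do). One small slip: in the disconnected case you say both summands are smooth ``by the inductive hypothesis,'' but that hypothesis is stated only for rank-$3$ matroids, whereas a nontrivial direct-sum decomposition of a rank-$3$ matroid has summands of rank at most $2$ (their ranks sum to $3$). You should instead observe that matroid strata of rank $\leq 2$ are always smooth---they are open subschemes of tori---which is standard and easy. Everything else, in particular the identification of $\pZ^1(\sQ,a)$ with the set of lines through $a$ and the principal-extension description in the parallel-element case, is handled correctly.
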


\noindent Matroid strata and realization spaces for rank $3$ matroids on $8$ or fewer elements have been studied in \cite{CoreyGrassmannians,CoreyLuber}. We summarize these finding in the following proposition. 
\begin{proposition}
    For $n\leq 8$, the realization space $\pR(\sQ;\C)$ is smooth for all $(3,n)$--matroids realizable over $\C$. They are also irreducible except for the case where $\sQ$ is the M\"{o}bius-Kantor matroid. 
\end{proposition}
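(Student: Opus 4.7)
The plan is to proceed by strong induction on $n$ and apply the structural results of Section \ref{sec:matroidStrataAndBasicOperations} to reduce the proposition to a short finite check. Proposition \ref{prop:3lines}, together with the inductive hypothesis, disposes of every $(3,n)$--matroid that is either non-simple, disconnected, or fails the 3 lines property; Corollary \ref{cor:principle_extension_smooth} and Proposition \ref{prop:directsum} transfer the irreducibility statement through the same reductions. It therefore suffices to verify the conclusion for simple connected $\C$-realizable $(3,n)$--matroids with $n\leq 8$ in which every ground-set element lies on at least three lines.

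Next I would enumerate such matroids. A direct incidence count rules out any example for $n\leq 6$, since the number of element--line incidences demanded by the 3 lines property cannot be accommodated by $\binom{n}{2}$ pairs when each line has at least three elements. For $n=7$ the Fano matroid is essentially the unique candidate satisfying the 3 lines property, but it is not $\C$-realizable, so this case is also vacuous. For $n=8$ one obtains a small finite list of $\C$-realizable matroids, extractable from the database of \cite{MatsumotoMoriyamaImaiBremner}; the M\"obius--Kantor matroid is among them.

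For each remaining matroid I would compute $\pR(\sQ;\C)$ exactly as in Example \ref{ex:3-9}: set up the reduced $3 \times 8$ matrix of indeterminates, form the ideal $I_{\sQ}$ from the collinearity relations, form the semigroup $U_{\sQ}$ from the maximal minors indexed by bases, and use linear substitutions to eliminate variables modulo the units in $U_{\sQ}$. The resulting quotient is either a localization of a polynomial ring (hence smooth and irreducible, by direct inspection of the Jacobian) or, in the exceptional case, a localization of $\C[x]/\langle x^2+1\rangle$, a reduced union of two isolated points which is smooth but not irreducible. This is precisely the M\"obius--Kantor case, whose characteristic quadratic mirrors the one appearing at the end of Example \ref{ex:3-9}.

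The main obstacle I expect is the organizational bookkeeping: although each individual presentation is a routine linear elimination, certifying that M\"obius--Kantor is the \emph{only} matroid in the enumeration for which the reduced ideal factors nontrivially over $\C$ requires a uniform treatment of every matroid on the list. This is where \texttt{OSCAR} and the matroid database of \cite{MatsumotoMoriyamaImaiBremner} become indispensable, allowing one to turn a potentially lengthy hand-computation into a systematic verification.
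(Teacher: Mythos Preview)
The paper does not actually supply a proof of this proposition: it is stated without proof as a summary of results already established in the authors' earlier papers \cite{CoreyGrassmannians,CoreyLuber}, as the sentence immediately preceding the proposition makes explicit. So there is no ``paper's own proof'' to compare against.

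That said, your sketch is a correct and faithful reconstruction of how those cited works proceed, and it dovetails with the machinery developed in this paper. The inductive reduction via Proposition~\ref{prop:3lines}, together with Corollary~\ref{cor:principle_extension_smooth} for the irreducibility clause, is exactly the intended use of those results. Your incidence-count argument for $n\leq 6$ is valid (the inequality $\binom{n}{2}\geq 3n$ forces $n\geq 7$), the Fano obstruction disposes of $n=7$, and the $n=8$ case is indeed handled by a short finite enumeration in which the M\"obius--Kantor matroid produces the unique reducible realization space, a smooth two-point scheme cut out by a quadratic as in Example~\ref{ex:3-9}. The only remark is that your ``main obstacle'' paragraph slightly overstates the difficulty: for $n=8$ the list of simple connected matroids with the 3 lines property is genuinely tiny, and the verification is closer to a hand check than a substantial \texttt{OSCAR} run.
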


\subsection{Rank 3 matroids on 11 or fewer elements}
\label{sec:3leq11}

Here is an outline of our general strategy to show that the realization spaces for $(3,n)$--matroids (for $9\leq n\leq 11$) are smooth. See Examples \ref{ex:3-9} and \ref{ex:3-10} for illustrations.

\begin{itemize}
    \item[Step 1.] Find all simple $(3,n)$--matroids that satisfy the 3 lines property (we modify this step for $n=9$). We use the catalog of small matroids from \texttt{polyDB} \cite{polyDB}, which originates in \cite{MatsumotoMoriyamaImaiBremner}.  
    \item[Step 2.] Remove all matroids not realizable over $\C$. 
    \item[Step 3.]  Using Algorithm \cite[Algorithm~6.12]{CoreyLuber}, systematically eliminate variables to produce a simpler presentation for the coordinate ring of  $\pR(\sQ;\C)$.  Call this new ring $S$. 
    \item[Step 4.] Determine the singular locus, e.g.,  by applying the Jacobian criterion \cite[Corollary~16.20]{Eisenbud} to $S$.
\end{itemize}

\begin{proposition}
\label{prop:3-9smooth}
    All realization spaces for $(3,9)$--matroids are smooth and, except for those listed in Table \ref{tab:39}, irreducible. 
\end{proposition}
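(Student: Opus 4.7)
The plan is to carry out the four-step strategy outlined just above the proposition. First, invoking Proposition \ref{prop:3lines} together with the base case $n \leq 8$ (established in the preceding proposition), reduce to showing smoothness of $\Gr(\sQ;\C)$ for the simple, connected $(3,9)$--matroids $\sQ$ that satisfy the 3 lines property. Extract this list from the \texttt{polyDB} database \cite{polyDB,MatsumotoMoriyamaImaiBremner}, and further discard any matroids that are not $\C$-realizable (detected either by computing $1 \in I_{\sQ}$ after localizing, or by standard finite-field obstructions). To shrink the list further, apply Corollary \ref{cor:principle_extension_smooth}: any $\sQ$ that arises as a principal extension, principal coextension, or a $\pZ^1(\sQ,a)\leq 2$--type deletion of a matroid on at most $8$ elements is handled immediately by the induction hypothesis, leaving only the matroids that truly require direct computation.

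For each surviving matroid $\sQ$, I would construct the matrix representation from \S\ref{sec:coordrings_realization_spaces} (choosing a reference circuit of size $4$ whenever one exists, to maximize the number of normalized entries) and assemble the coordinate ring $U_{\sQ}^{-1}B_{\sQ}/I_{\sQ}$. Then run the elimination routine of \cite[Algorithm~6.12]{CoreyLuber} in \texttt{OSCAR}: iteratively identify a generator of $I_{\sQ}$ that is linear in some variable $x_{ij}$, solve for that variable, and substitute, updating $U_{\sQ}$ and $I_{\sQ}$ accordingly. The procedure terminates with a presentation $U^{-1}\C[y_1,\ldots,y_m]/J$ in which $m$ is very small, often $0$ or $1$, as in Example \ref{ex:3-9}. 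Apply the Jacobian criterion \cite[Corollary~16.20]{Eisenbud} to this reduced presentation to confirm smoothness.

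Irreducibility can be read off from the reduced presentation: when $m = 0$ (which is the typical situation once enough variables have been eliminated), $\pR(\sQ;\C)$ is irreducible if and only if the resulting localization is a domain; for the handful of cases where the remaining univariate polynomial factors nontrivially over $\C$, exactly as in Example \ref{ex:3-9}, $\pR(\sQ;\C)$ splits as a disjoint union of smooth components. These exceptional matroids are recorded in Table \ref{tab:39}. The main obstacle is not conceptual but computational: the pipeline must be run on every non-reducible matroid in the filtered list, and for some matroids the elimination algorithm's termination size depends strongly on the chosen reference basis and reference circuit, so a pre-processing step that searches for favorable reference data is essential to keep the Jacobian computation within reach of \texttt{OSCAR}.
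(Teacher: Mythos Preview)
Your plan for smoothness is essentially the paper's own approach: extract the simple $(3,9)$--matroids from the database, discard the non--$\C$-realizable ones, simplify each coordinate ring via \cite[Algorithm~6.12]{CoreyLuber}, and verify smoothness. Two differences are worth noting. First, the paper does \emph{not} filter by the $3$ lines property for $n=9$; it runs the computation on all $370$ $\C$-realizable simple matroids. Second, in every case outside Table~\ref{tab:39} the ideal collapses to $\langle 0\rangle$ after elimination, so the realization space is an open subscheme of affine space and no Jacobian check is needed; the Jacobian step you propose is only genuinely invoked later, for $n=10,11$.

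The reason the paper skips the $3$-lines filter here is exactly where your irreducibility argument has a gap. You reduce to the matroids satisfying the $3$ lines property and read irreducibility off the presentations of \emph{those}. But several entries of Table~\ref{tab:39} do \emph{not} satisfy the $3$ lines property (for instance the first one, where the element $9$ lies on no line); deleting such an element yields the M\"obius--Kantor matroid, whose realization space is already reducible. Your filter would discard these matroids as ``handled by the induction hypothesis'' and never flag them as reducible. Corollary~\ref{cor:principle_extension_smooth} is indeed an if-and-only-if, so your approach can be repaired by explicitly tracking, for every matroid removed by the reduction, whether the chain of deletions terminates at the M\"obius--Kantor matroid, and adding those to the table. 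The paper sidesteps this bookkeeping entirely by computing all $370$ cases directly; with only that many matroids the extra cost is negligible, and it yields the irreducibility classification uniformly.
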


\begin{proof}
    Up to $\Sn{9}$--symmetry, there are $383$ simple $(3,9)$--matroids, and $370$ are $\C$-realizable.  Since there are not too many such matroids, we do not isolate those that satisfy the 3 lines property; this allows us to classify those realization spaces that are also irreducible. 
    
    For all matroids except those in Table \ref{tab:39}, we find a reference circuit so that, after applying Step 3 from above, the ideal reduces to $\langle 0 \rangle$. Therefore, these realization spaces are smooth and irreducible. For the matroids in Table \ref{tab:39}, the presentation of the coordinate ring of $\pR(\sQ;\C)$ is obtained using the reference circuit $\{1,2,3,4\}$ and applying Step 3. As the ideals are each generated by a single degree 2 univariate polynomial, these realization spaces are smooth with 2 irreducible components. 
\end{proof}

In Table \ref{tab:39}, the 3rd matroid is the Hesse matroid,  the 4th matroid is studied in Example \ref{ex:3-9}, and the 6th one is the Perles matroid. The remaining matroids have the M\"{o}bius-Kantor matroid as a restriction. 

\begin{table*}[h]
\centering
	\begin{tabular}{ |c|c|c|c| }
	\hline
	 \multirow{2}{5cm}{\centering \textbf{Lines} $\pL(\sQ)$} & \textbf{Ambient} & \multirow{2}{1.5cm}{\centering \textbf{Ideal}} & \multirow{2}{5cm}{\centering \textbf{Semigroup}} \\
   & \textbf{ring} & &\\
\hline 
   \multirow{4}{5cm}{\centering 127, 138, 145, 246, 258, 347, 356, 678} & \multirow{4}{1.5cm}{\centering $\C[x,y,z]$} & \multirow{4}{2cm}{\centering $x^2-x+1$} & \multirow{4}{5.5cm}{\centering {\footnotesize $x, y, z, x - 1, y - 1, z - 1, x - y, xz - x + y - z,  x - y + z, y - z, x - y + z - 1, x + z - 1, xy - y + z, xy - xz + z,  xz - y$}} \\
    & & & \\
    & & & \\
    & & & \\
	\hline 
    \multirow{2}{5cm}{\centering 128, 135, 147, 239, 245, 267, 346, 378, 568} & \multirow{2}{1.5cm}{\centering $\C[x,y]$} & \multirow{2}{2cm}{\centering $x^2-x+1$} & \multirow{2}{5.6cm}{\centering {\footnotesize $x, y, x - 1, y - 1, x - y, x - y - 1, xy + 1, xy - x + 1, xy - y + 1, xy + x - y$}} \\
    & & & \\
	\hline 
   \multirow{2}{5cm}{\centering 127, 138, 145, 169, 239, 246, 258,  347, 356, 489, 579, 678} & \multirow{2}{1.5cm}{\centering $\C[x]$} & \multirow{2}{2cm}{\centering $x^2-x+1$} & \multirow{2}{5cm}{\centering $x, x-1$} \\
     & & & \\
	\hline 
   \multirow{2}{5cm}{\centering 125, 139, 147, 168, 237, 246, 289, 345, 578, 679} & \multirow{2}{1.5cm}{\centering $\C[x]$} & \multirow{2}{2.4cm}{\centering $x^2+1$} & \multirow{2}{5.7cm}{\centering $x,x-1,x+1$} \\
    & & & \\
			\hline 
   \multirow{2}{5cm}{\centering   1258, 136, 149, 237, 269, 345, 467, 579} & \multirow{2}{1.5cm}{\centering $\C[x,y]$} & \multirow{2}{2cm}{\centering $x^2-x+1$} & \multirow{2}{5.4cm}{\centering  $x, y, x-1, y-1, x - y, xy - 1, xy - x - y, x + y - 1$} \\
    & & & \\
			\hline 
   \multirow{2}{5cm}{\centering 1258, 136, 179, 237, 249, 345, 389, 468, 567} & \multirow{2}{1.5cm}{\centering $\C[x]$} & \multirow{2}{2cm}{\centering $x^2+x-1$} & \multirow{2}{4cm}{\centering $x, x+1, x-1$} \\
     & & & \\
			\hline 
   \multirow{2}{5cm}{\centering 1258, 136, 237, 269, 345, 389, 468, 479, 567} & \multirow{2}{1.5cm}{\centering $ \C[x]$} & \multirow{2}{2cm}{\centering $x^2+x+1$} & \multirow{2}{4cm}{\centering $x, x+1, x-1$} \\
     & & & \\
			\hline 
   \multirow{2}{5cm}{\centering 1259, 1367, 238, 247, 345, 469, 568, 789} & \multirow{2}{1.5cm}{\centering $\C[x]$} & \multirow{2}{2cm}{\centering $x^2-x+1$} & \multirow{2}{4cm}{\centering $x, x-1, x-2$} \\
     & & & \\
			\hline 
		\end{tabular}
		\caption{$\C$-realizable $(3,9)$--matroids with reducible realization spaces}
		\label{tab:39}
	\end{table*}

\begin{proposition}
\label{prop:3-10-11smooth}
    All realization spaces for $(3,10)$ and $(3,11)$--matroids are smooth. 
\end{proposition}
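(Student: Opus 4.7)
The plan follows the four-step strategy used for Proposition \ref{prop:3-9smooth}, but the reductions of \S\ref{sec:matroidStrataAndBasicOperations} become essential rather than optional at $n=10,11$. I would first enumerate, up to $\Sn{n}$-symmetry, the simple $(3,n)$-matroids from the \texttt{polyDB} catalog of \cite{MatsumotoMoriyamaImaiBremner}, and then apply Proposition \ref{prop:3lines} to discard every matroid that is disconnected, nonsimple, or fails the 3 lines property; for those, smoothness already follows from the $n\leq 9$ case. I would then remove matroids that fail to be $\C$-realizable, which can be verified either by exhibiting a realization over $\C$ or by detecting that $1\in I_{\sQ}$ in $U_{\sQ}^{-1}B_{\sQ}$.

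Next, I would prune the remaining list aggressively using Corollary \ref{cor:principle_extension_smooth}. For each candidate $\sQ$ and each element $a$ of its ground set, I would compute $\pZ^{1}(\sQ,a)$ and check whether $|\pZ^{1}(\sQ,a)|\leq 2$; whenever this holds, Proposition \ref{prop:2planes} reduces smoothness of $\Gr(\sQ;\C)$ to smoothness of $\Gr(\sQ\setminus a;\C)$, which in turn is known from Proposition \ref{prop:3-9smooth} or by iteration. The introduction explicitly advertises that only about $10\%$ of the matroids covered by Theorem \ref{thm:intro-3-leq11-smooth} will remain after this reduction, so the bookkeeping here is really what makes the computation feasible.

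For the surviving matroids I would execute Step 3 of the proof of Proposition \ref{prop:3-9smooth}: choose a reference circuit $\gamma_{0}$ of size $4$, write down the matrix of variables from \S\ref{sec:coordrings_realization_spaces}, and apply \cite[Algorithm~6.12]{CoreyLuber} to eliminate variables one at a time via the linear elements of $I_{\sQ}$, producing a compact presentation $U^{-1}B/I$ for the coordinate ring of $\pR(\sQ;\C)$. Smoothness is then verified in \texttt{OSCAR} by the Jacobian criterion \cite[Corollary~16.20]{Eisenbud} applied to this presentation, localized at the units in $U$; unlike in Proposition \ref{prop:3-9smooth} we do not need to classify components, so the check reduces to showing that the Jacobian ideal has the expected codimension on $\Spec(U^{-1}B/I)$.

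The principal obstacle is computational scale rather than conceptual. The number of simple $(3,11)$-matroids is large, and even after the Proposition \ref{prop:2planes} reduction the remaining matroids can have rich ideals $I_{\sQ}$ whose Jacobian analysis is sensitive to the choice of reference circuit. I would therefore budget effort on two fronts: implementing a search that applies Proposition \ref{prop:2planes} at every possible element $a$ (not merely one), so as to maximize the reductions; and trying multiple reference circuits per matroid so that \cite[Algorithm~6.12]{CoreyLuber} produces the smallest feasible presentation before invoking the Jacobian criterion.
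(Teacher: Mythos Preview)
Your approach is essentially the paper's: filter by the 3 lines property, discard non-$\C$-realizable matroids, simplify the presentation of $\pR(\sQ;\C)$ via \cite[Algorithm~6.12]{CoreyLuber}, and apply the Jacobian criterion. Two remarks are worth making.

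First, your additional pruning step using Proposition~\ref{prop:2planes} is vacuous. In rank~$3$, for a simple matroid $\sQ$, the set $\pZ^{1}(\sQ,a)$ is exactly the set of lines of $\sQ$ through $a$; hence the 3 lines property is precisely the statement $|\pZ^{1}(\sQ,a)|\geq 3$ for every $a$. Once you have filtered by Proposition~\ref{prop:3lines}, no surviving matroid has any element with $|\pZ^{1}(\sQ,a)|\leq 2$, so Proposition~\ref{prop:2planes} never applies. The ``about $10\%$'' figure from the introduction already reflects the 3 lines filter (together with the 4 planes filter in rank~$4$); there is no further combinatorial reduction available here.

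Second, the paper records a simplification you did not anticipate: for every $\C$-realizable $(3,10)$ and $(3,11)$ matroid satisfying the 3 lines property, some choice of reference circuit makes $I_{\sQ}$ reduce to a \emph{principal} ideal $\langle f\rangle$ after Step~3. Smoothness then amounts to checking that the saturation of $\langle f,\partial f/\partial x_1,\ldots,\partial f/\partial x_m\rangle$ by $U$ is the unit ideal, which is a lighter computation than the general Jacobian-codimension check you describe. Your worry about ``rich ideals $I_{\sQ}$'' does not materialize in practice.
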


\begin{proof}
    Up to $\Sn{10}$--symmetry, there are $5249$ simple $(3,10)$--matroids, $151$ satisfy the 3 lines property, and 107 of these are $\C$-realizable. Similarly, up to $\Sn{11}$--symmetry, there are $232\,928$ simple $(3,11)$--matroids, $16\,234$ satisfy the 3 lines property, and $11\,516$ of them are $\C$-realizable.  For all of these matroids (for both $n=10,11$), we find a reference circuit so that, after applying Step 3, the ideal $I_{\sQ}$ reduces to a principal ideal. Given a ring of the form $S = U^{-1}\C[x_1,\ldots,x_m]/\langle f\rangle$, the ideal of the singular locus of $\Spec(S)$ is 
    \begin{equation*}
        J = \langle f, \frac{\partial f}{\partial x_1}, \ldots, \frac{\partial f}{\partial x_m}\rangle.
    \end{equation*}
    Thus $\Spec(S)$ is smooth if and only if $J = \langle 1 \rangle$ in $U^{-1}\C[x_1,\ldots,x_m]$, equivalently, the saturation of $J$ by $U$ is the unit ideal in $\C[x_1,\ldots,x_m]$. We use this to verify that these realization spaces are smooth. 
\end{proof}

\noindent Theorem \ref{thm:intro-3-leq11-smooth} now follows from Propositions \ref{prop:3-9smooth} and \ref{prop:3-10-11smooth}.

\begin{example}
\label{ex:3-10}
    Let $\sQ$ be the simple $(3,10)$--matroid whose lines are
    \begin{equation*}
        \mathcal{L}(\mathsf{Q}) = \left\{
        \begin{array}{c}
        \{1,2,5\}, \{1,3,6\}, \{1,4,8\}, \{2,3,7\}, \{2,4,9\}, \{2,6,10\}, \\ 
        \{3,4,5\}, \{4,6,7\}, \{5,9,10\},\{6,8,9\}, \{7,8,10\}
        \end{array}
         \right\}.
    \end{equation*}
    Let $B=\C[x,y]$ and define the $B$-valued matrix $A$ by
    \begin{equation*}
    A = 
\begin{bmatrix}
    1 & 0 & 0 & 1 & 1 & x & 0 & x^2-xy-1 & 1 & x \\
    0 & 1 & 0 & 1 & 1 & 0 & x & x-y-1 & -x+y+1 & y\\
    0 & 0 & 1 & 1 & 0 & 1 & x-1 & x-y-1 & 1 & 1
\end{bmatrix}.
    \end{equation*}
    Define the ideal $I\subset \C[x,y]$ and multiplicative semigroup $U \subset \C[x,y]$ by
\begin{align*}
    I &= \langle x^2y - x^2 - xy^2 + xy - y \rangle, \\
    U &= \left\langle
    \begin{array}{c}
x, y, x - 1, y - 1, x - y,  x - y - 1, xy - x + 1, xy - x - y, x^2 - xy - 1, \\
x^2 - xy + y, x^2 - xy - x + y + 1, x^3 - 2x^2 - xy^2 + 2xy - 2y 
    \end{array}
    \right\rangle_{\smgp}.
\end{align*}
Denote by $f$ the unique generator of $I$. 
The $\C$-realizations of $\sQ$ are exactly the row spans of $A$ for any $(x,y)$ such that $f(x,y)=0$ and $g(x,y) \neq 0$ for all $g\in U$. The singular locus of $I$ is the ideal  $J\subset U^{-1}\C[x,y]$ defined by
\begin{align*}
    J = \langle f,\, \frac{\partial f}{\partial x},\, \frac{\partial f}{\partial y}  \rangle 
    = \langle x^2y-x^2-xy^2+xy-y,\, 2x-y,\, x^2-2xy+x-1\rangle.
\end{align*}
From the second generator, $y\equiv 2x \mod J$. Performing this substitution in the remaining two generators yields $-2x^3+x^2-2x$ and $-3x^2+x-1$. Because these univariate polynomials are relatively prime, the ideal $J$ is the unit ideal, and therefore $\pR(\sQ;\C)$ is smooth. 
\end{example}

\subsection{(3,12)--matroids and beyond}
\label{sec:3-12}
In this section we exhibit a rank $3$ matroid on $12$ elements, denoted $\sQ_{\sing}$, whose realization space is singular. Using this, we show that singular realization spaces exist for $(3,n)$-matroids with $n>12$.
\begin{remark}
We discovered the matroid $\sQ_{\sing}$ by experiments using \texttt{OSCAR}. While our examination of $(3,12)$-matroids is not exhaustive, our search yields $76$ singular realization spaces. This data is available in the github repository, see the Code section of the introduction. 
\end{remark}

Define
\begin{align}
\label{eq:singularMatroid}
    \mathcal{L} = 
    \left\{ \begin{array}{l} 
\{1,2,6,8\}, \{1,3,5,7\}, \{1,9,12\}, \{2,4,5,9\}, \{2,7,11\}, \{3,4,6\}, \\ 
\{3,8,9\}, \{3,10,12\}, \{4,7,8\},  \{4,10,11\}, \{5,6,10\}, \{8,11,12\}
\end{array} \right\}
\end{align}
and let $\sQ_{\sing}$ be the simple $(3,12)$--matroid with $\pL(\sQ_{\sing}) = \pL$. A line configuration realizing $\sQ_{\sing}$ is displayed in Figure \ref{fig:singular_configuration}.

\begin{figure}[h]
    \centering
    \includegraphics[width = .4\textwidth]
    {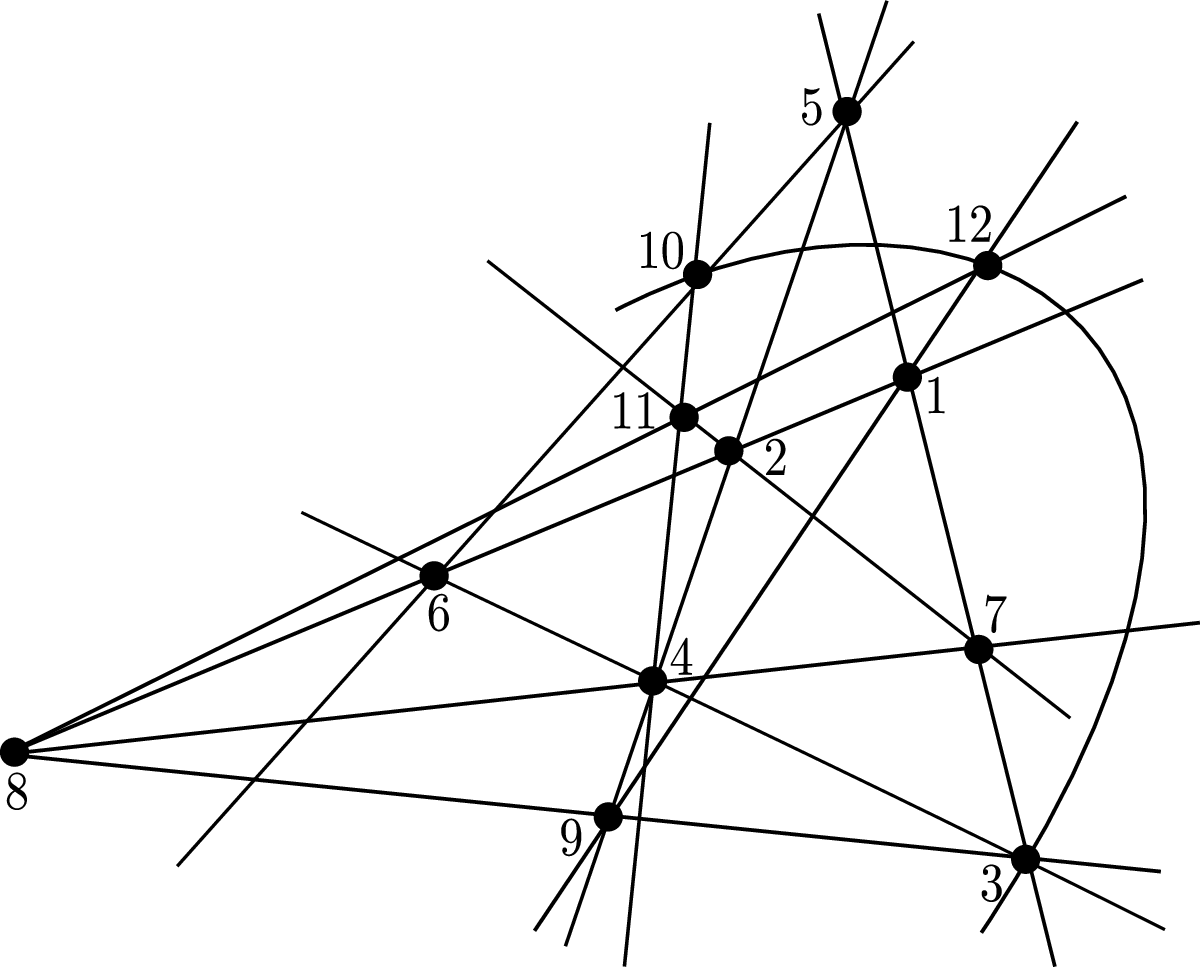}
    \caption{Projective realization of $\sQ_{\sing}$}
    \label{fig:singular_configuration}
\end{figure}

Let $B = \C[x,y]$ and define the $B$-valued matrix 
\begin{equation}
\label{eq:representationMatrix312}
    A = \begin{bmatrix}
    1 & 0 & 0 & 1 & 1 & 1 & y-1 & 1 & 1 & x & y-1 & x \\
    0 & 1 & 0 & 1 & 0 & 1 & 0 & y & y & y & -xy^2+2y^2-y & y \\
    0 & 0 & 1 & 1 & 1 & 0 & y & 0 & 1 & x-y & y & 1
    \end{bmatrix}.
\end{equation}
The coordinate ring of $\pR(\sQ_{\sing};\C)$ is isomorphic to $U^{-1}B/I$ where 
\begin{equation*}
    I = \langle(xy+x-2y)(y^2-y+1)\rangle
\end{equation*}
and $U$ is generated by
\begin{gather*}
x,
y,
x - 1,
x - 2,
y - 1,
y + 1,
x - y,
x - 2y,
x - y - 1,
xy - y + 1,
xy - 2y + 1, \\
x + y^2 - y,
xy - 2y + 2,
x + y^2 - 2y,
x + y^2 - y - 1,
xy - y^2 - y + 1, \\
xy^2 - y^2 + y - 1,
xy^2 - 2y^2 + y - 1,
xy^2 - 2y^2 + 2y - 1,
x^2y - xy^2 - 2xy + x + 2y^2.
\end{gather*}
\begin{theorem}
\label{thm:singular3-12}
    The realization space $\pR(\sQ_{\sing};\C)$ has 3 irreducible components and 2 nodal singularities. 
\end{theorem}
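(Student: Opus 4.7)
The plan is to leverage the explicit presentation $U^{-1}B/I$ provided just before the theorem. Since $I$ is principal and generated by the polynomial $f = (xy+x-2y)(y^2-y+1)$, I would first factor $f$ over $\C$ as
\begin{equation*}
f = g \cdot (y-\omega)(y-\bar\omega), \qquad g = xy+x-2y,
\end{equation*}
where $\omega,\bar\omega = \tfrac{1\pm i\sqrt 3}{2}$ are the primitive sixth roots of unity. These three factors are pairwise coprime irreducible polynomials in $\C[x,y]$, so $\langle f\rangle = \langle g\rangle \cap \langle y-\omega\rangle \cap \langle y-\bar\omega\rangle$ is the primary decomposition in $B$. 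To promote this to a decomposition in $U^{-1}B/I$, I would check that each of the three prime components meets the open set $\Spec(U^{-1}B)$; since each component is one-dimensional and $U$ is a finite set of nonzero polynomials, the intersection of each component with $V(U)$ is a finite (proper) closed subset, so the components survive localization. This establishes the three irreducible components.

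Next, I would locate the singular locus by the Jacobian criterion applied to $f$. A direct computation gives
\begin{equation*}
\partial_x f = (y+1)(y^2-y+1), \qquad \partial_y f = (x-2)(y^2-y+1) + (2y-1)\,g.
\end{equation*}
Since $y+1$ is among the generators of $U$, it is a unit in $U^{-1}B$, so $\partial_x f = 0$ forces $y^2-y+1=0$. On that locus, $\partial_y f = (2y-1)\,g$, and $2y-1$ is a unit (neither $\omega$ nor $\bar\omega$ equals $\tfrac12$), so we are reduced to $g=0$. Solving $g=0$ together with $y^2-y+1=0$ (and using $y+1\ne 0$) yields the two points
\begin{equation*}
p_\pm = \Bigl(\tfrac{2y_\pm}{y_\pm+1},\, y_\pm\Bigr), \qquad y_\pm = \omega,\bar\omega.
\end{equation*}

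I would then verify that $p_\pm \in \Spec(U^{-1}B/I)$, i.e.\ that no generator of $U$ vanishes at $p_\pm$. Using the quadratic relation $y^2 = y-1$ to reduce any polynomial in $U$ modulo $y^2-y+1$ to a $\C$-linear combination of $1$ and $y$, each check becomes a trivial verification in $\C[y]/(y^2-y+1)$ (straightforward but tedious by hand, immediate with \texttt{OSCAR}; the paper notes that supplementary code is available). Finally, to see that the singularities are nodal, I would observe that at each $p_\pm$ two smooth components meet: the line $V(y-y_\pm)$ and the rational curve $V(g)$. Their conormal vectors
\begin{equation*}
\nabla(y-y_\pm)\big|_{p_\pm} = (0,1), \qquad \nabla g\big|_{p_\pm} = (y_\pm+1,\, x_\pm - 2),
\end{equation*}
are linearly independent because $y_\pm +1 \ne 0$. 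Hence the tangent cone at $p_\pm$ is a union of two transverse lines, so each $p_\pm$ is an ordinary node, completing the proof.

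The main obstacle, and the only nonroutine point, is the verification that both intersection points $p_\pm$ actually lie in the open locus cut out by $U$. The remaining steps are a factorization, a Jacobian computation controlled by the observation $y+1\in U$, and a transversality check; the bookkeeping to check all twenty generators of $U$ at $p_\pm$ is the part most naturally offloaded to the \texttt{OSCAR} supplementary computation.
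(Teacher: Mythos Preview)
Your proposal is correct and follows essentially the same approach as the paper: factor $f$ into three coprime irreducibles to obtain the three components, locate the singularities at the two intersection points of $V(g)$ with $V(y^2-y+1)$, verify these points survive the localization by $U$ (the paper also calls this a routine verification), and conclude nodality by transversality of the branches. Your use of the Jacobian and the observation $y+1\in U$ is a slightly more explicit route to the same two candidate points, but the argument is otherwise identical.
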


\begin{proof} 
The description above realizes $\pR(\sQ_{\sing};\C)$ as a dense open subvariety of an affine plane curve with 3 irreducible components: $X_1$ corresponds to $xy + x - 2y = 0$, $X_2$ corresponds to $y-\omega=0$, and $X_{3}$ corresponds to $y-\overline{\omega}=0$; here $\omega$ and $\overline{\omega}$  are the solutions to  $y^2-y+1=0$. Individually, these irreducible components are smooth, so the singularities of $\pR(\sQ_{\sing};\C)$ are the intersection points of these curves. Solving the system of equations $xy+x-2y = y^2-y+1 = 0$ yields the following two points
    \begin{equation*}
    \sq_1 = \left( \frac{3-\sqrt{-3}}{3},\frac{1-\sqrt{-3}}{2}\right) \hspace{20pt}
    \sq_2 = \left(\frac{3+\sqrt{-3}}{3},\frac{1+\sqrt{-3}}{2}\right).
    \end{equation*}
  It remains to show that $f(\sq_1)$ and $f(\sq_2)$ are nonzero for all $f$ in the generating set of $U$ recorded above, which is a routine verification. 
  Since $X_{1}$ intersects $X_{2}$ and $X_{3}$ transversely at these points, these singularities are nodes.
\end{proof}

Let $3\leq d \leq \frac{n}{2}$ and  $(d,n)$ is not any of the following pairs:
\begin{equation}
    \label{eq:openPairs}
\begin{gathered}
    (3,6), (3,7), (3,8), (3,9), (3,10), (3,11), (4,8), (4,9), (4,10), (4,11), (4,12), \\
    (5,10), (5,11), (5,12), (5,13), (6,12), (6,13), (6,14), (7,14), (7,15), (8,16).
\end{gathered}
\end{equation}
For such a pair, denote by $\sQ_{d,n,\sing}$ the matroid obtained from $\sQ_{\sing}$ from Theorem \ref{thm:singular3-12} by $d-3$ free coextensions and $n-(d+9)$ free extensions (here, \textit{free} means that we (co)extend into the full ground set). This is a matroid of rank $d$ on $[n]$.    The following is a reformulation of Theorem \ref{thm:intro-3-geq12-singular} in the introduction. It is a direct consequence of Propositions \ref{prop:principalExtension}, \ref{prop:coextension}, and Theorem \ref{thm:singular3-12}.

\begin{theorem}
\label{thm:singular3-n}
For $(d,n)$ as above, the realization space $\pR(\sQ_{d,n,\sing};\C)$ has nodal singularities.  
\end{theorem}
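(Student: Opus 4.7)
The plan is to proceed by induction on the total number of principal operations used to construct $\sQ_{d,n,\sing}$ from $\sQ_{\sing}$, combining Theorem \ref{thm:singular3-12} as the base case with Propositions \ref{prop:principalExtension} and \ref{prop:coextension} as the inductive step. The counts are consistent: starting from rank $3$ on $12$ elements, the $d-3$ free coextensions raise the rank to $d$, and the subsequent $n-(d+9)$ free extensions bring the ground set to size $n$, so the total number of operations in the chain is $n - 12$.

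The base case is Theorem \ref{thm:singular3-12}, which establishes that $\pR(\sQ_{\sing};\C)$ has exactly two nodal singularities at the explicit points $\sq_1$ and $\sq_2$. For the inductive step, suppose $\sQ$ is obtained from a connected, $\C$-realizable matroid $\sQ'$ by a single free extension or free coextension, and that $\pR(\sQ';\C)$ has nodal singularities. By Proposition \ref{prop:principalExtension} (or Proposition \ref{prop:coextension}, which reduces to it via duality), the morphism $\varphi_{\sQ,\sQ'}:\Gr(\sQ;\C) \to \Gr(\sQ';\C)$ is smooth and surjective with connected fibers. Concretely, the proof of Proposition \ref{prop:principalExtension} exhibits $\Gr(\sQ;\C)$ as an open subscheme of $\Gr(\sQ';\C) \times (\G_{m})^{k}$, where $k$ is the rank of the (co)extending flat.

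The key observation is that smooth surjective morphisms preserve singularity types in the sense of Vakil: the formal completion at any point of $\Gr(\sQ;\C)$ lying over a nodal singularity of $\Gr(\sQ';\C)$ is isomorphic to $\C[[x,y,z_1,\ldots,z_k]]/\langle xy\rangle$, which is still regarded as a node up to smooth equivalence. Hence the singular locus of $\Gr(\sQ;\C)$ consists precisely of the preimages of the nodal singularities of $\Gr(\sQ';\C)$, and these preimages are again nodes. Descending through the free $T$-action via the Gelfand--MacPherson isomorphism \eqref{eq:gelfand-macpherson} preserves singularity types and connected components, so the same statement transfers to $\pR(\sQ;\C)$.

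The main technical point to confirm carefully is that each intermediate matroid in the chain remains both $\C$-realizable and connected, so that Propositions \ref{prop:principalExtension}, \ref{prop:coextension}, and the Gelfand--MacPherson isomorphism all apply at every step. Realizability is preserved by principal extension over an infinite field by \cite[Lemma~2.1]{MayhewNewmanWhittle}, and hence also by principal coextension through duality. Connectedness persists because in a free (co)extension the new element is freely placed into the entire ground set and therefore cannot be isolated by any nontrivial direct-sum decomposition. Once these verifications are in place, iterating the inductive step $n-12$ times transports the two nodal singularities of $\pR(\sQ_{\sing};\C)$ to nodal singularities of $\pR(\sQ_{d,n,\sing};\C)$.
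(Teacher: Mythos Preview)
Your proposal is correct and takes essentially the same approach as the paper: the paper's proof is the single sentence ``It is a direct consequence of Propositions~\ref{prop:principalExtension}, \ref{prop:coextension}, and Theorem~\ref{thm:singular3-12},'' and your argument is a careful unpacking of precisely that deduction, including the verification that smooth surjective morphisms preserve singularity types and that connectedness and $\C$-realizability are maintained along the chain.
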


\noindent Combining this result with Theorems \ref{thm:intro-3-leq11-smooth} and \ref{thm:intro-4-n-matroids-smooth}, we see that there are only 13 pairs $(d,n)$ with $3\leq d \leq \frac{n}{2}$ for which we do not know if there is a  $\C$-realizable $(d,n)$--matroids whose realization space is singular.

\section{Rank 4 matroids}\label{sec:4n}

For a simple and connected $(4,n)$-matroid $\sQ$, a hyperplane with at least 5 elements is called a \textit{plane}, and we denote the set of planes by $\pL(\sQ)$.  We say that a simple and connected $(4,n)$--matroid $\sQ$ satisfies the \textit{3 planes property} if every $a\in [n]$ is contained in at least $3$ planes of $\sQ$. Similarly, $\sQ$ satisfies the \textit{4 planes property} if every $a\in [n]$ is contained in at least $4$ planes of $\sQ$. The following is a direct consequence of Propositions \ref{prop:directsum}, \ref{prop:principalExtension}, and \ref{prop:2planes}.

\begin{proposition}
\label{prop:4planes}
Let $\sQ$ be a connected and $\C$-realizable  $(4,n)$--matroid such that
\begin{enumerate}
    \item $\sQ$ is not simple,
    \item $\sQ$  does not satisfy the 3 planes property, or
    \item $n\leq 9$ and $\sQ$ does not satisfy the 4 planes property. 
\end{enumerate}
    If the matroid strata are smooth for all $\C$-realizable rank $4$ matroids on $<n$ elements, then $\Gr(\sQ;\C)$ is smooth. Similarly, if the matroid strata are smooth and irreducible for all $\C$-realizable rank $4$ matroids on $<n$ elements, then $\Gr(\sQ;\C)$ is smooth and irreducible. 
\end{proposition}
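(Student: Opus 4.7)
The plan is to handle each of the three listed conditions separately. In every case, the goal is to exhibit a smooth morphism $\varphi:\Gr(\sQ;\C)\to\Gr(\sQ';\C)$ with $\sQ'$ a matroid on strictly fewer elements. The inductive hypothesis then gives smoothness of the target, and smoothness of $\varphi$ transfers to the source; surjectivity on closed points with connected fibers transfers irreducibility when the stronger form is assumed. Should $\sQ'$ happen to decompose into a direct sum, Proposition~\ref{prop:directsum} permits applying the induction to each summand independently.

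For condition (1), connectedness of a rank-$4$ matroid on $n\geq 2$ elements rules out loops, so being non-simple forces the existence of a parallel pair $b,b'\in E$. Setting $\sQ':=\sQ\setminus b'$, the rank-$1$ flat $\cl_{\sQ'}(\{b\})=\{b\}$ exhibits $\sQ=\pe{\sQ'}{\{b\}}{b'}$ as a principal extension, and Proposition~\ref{prop:principalExtension} supplies the required morphism directly.

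For conditions (2) and (3), I would locate an element $a^*\in E$ with $|\pZ^1(\sQ,a^*)|\leq 2$ and then apply Proposition~\ref{prop:2planes} to the morphism $\Gr(\sQ;\C)\to\Gr(\sQ\setminus a^*;\C)$. A hyperplane $H\ni a$ contributes to $\pZ^1(\sQ,a)$ exactly when $H\setminus a$ has rank $3$ in $\sQ$; in a simple rank-$4$ matroid this rules out size-$3$ hyperplanes altogether, includes a size-$4$ hyperplane when $\sQ_{|H}\cong U_{3,4}$ or when $a$ lies on the distinguished rank-$2$ flat of a split $H$, and includes a plane unless $H\setminus a$ is contained in a single line. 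Under condition (2), an element witnessing $\leq 2$ planes constrains this enumeration enough to select $a^*$; under condition (3) the weaker bound $\leq 3$ planes is compensated by $n\leq 9$, which caps the number of size-$4$ $U_{3,4}$ hyperplanes that can meet at a fixed element.

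The main obstacle is precisely this bookkeeping step: the ``plane count through $a$'' and $|\pZ^1(\sQ,a)|$ differ by size-$4$ hyperplane contributions, so the argument must show that the hypothesis on planes genuinely controls these extra contributions and that a suitable $a^*$ always exists. Should no $a^*$ with $|\pZ^1|\leq 2$ be available but one with $|\pZ^1|=3$ satisfying the rank condition of Lemma~\ref{lem:01planes}(3) be, one instead recognizes $\sQ$ as a principal extension at $a^*$ and applies Proposition~\ref{prop:principalExtension} in place of Proposition~\ref{prop:2planes}; this fallback is what makes the three cited results collectively sufficient.
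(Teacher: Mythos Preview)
Your overall plan—split on the three hypotheses, handle non-simplicity by recognising a parallel element as a principal extension (Proposition~\ref{prop:principalExtension}), and in the remaining cases invoke Proposition~\ref{prop:2planes} at a well-chosen element—matches what the paper indicates: it offers no argument beyond declaring the result a ``direct consequence'' of Propositions~\ref{prop:directsum}, \ref{prop:principalExtension}, and~\ref{prop:2planes}. You are also right to flag the discrepancy between the number of planes through $a$ and $|\pZ^1(\sQ,a)|$ caused by size-$4$ hyperplanes; the paper does not comment on this either.

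Your proposed fallback via Lemma~\ref{lem:01planes}(3), however, is vacuous in the situation at hand. In a simple rank-$4$ matroid with $|\pZ^1(\sQ,a)|=r=3$, the lemma demands $\rho_\sQ(H_1\cap H_2\cap H_3)=d-r=1$ together with $\rho_\sQ((H_1\cap H_2\cap H_3)\setminus a)=1$. But the triple intersection is a flat containing $a$, and in a simple matroid a rank-$1$ flat is a singleton; hence the intersection equals $\{a\}$ and deleting $a$ drops the rank to $0$, so the hypothesis can never be satisfied. The argument therefore hinges entirely on producing, under (2) or (3), some element $a^*$ with $|\pZ^1(\sQ,a^*)|\leq 2$. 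Example~\ref{ex:morphismNotSmooth} already shows that $a^*$ need not be the element witnessing the failed plane count (there the element $10$ lies on no planes yet has $|\pZ^1(\sQ,10)|=3$, while $|\pZ^1(\sQ,1)|=1$), and neither your sketch nor the paper's one-line justification supplies a proof that such an $a^*$ exists in general.
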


\begin{proposition}
    If $\sQ$ is a $\C$--realizable $(4,n)$--matroid for $n\leq 7$, then $\Gr(\sQ;\C)$ is smooth and irreducible.
\end{proposition}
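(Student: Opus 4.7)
The plan is to apply the duality isomorphism of Proposition \ref{prop:duality} to reduce this statement to results already established in the excerpt for matroids of rank at most $3$. If $\sQ$ is a $\C$--realizable $(4,n)$--matroid with $n \leq 7$, then $\sQ^{\vee}$ is a $\C$--realizable $(n-4,n)$--matroid with $\Gr(\sQ;\C) \cong \Gr(\sQ^{\vee};\C)$, and $n-4 \leq 3$. By Proposition \ref{prop:directsum} we may further assume $\sQ^{\vee}$ is connected, because a direct sum decomposition yields a product decomposition of the stratum, and smoothness and irreducibility are preserved under finite products. This leaves four cases, indexed by $\mathrm{rank}(\sQ^{\vee}) \in \{0,1,2,3\}$, which correspond to $n = 4,5,6,7$.

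The cases of rank at most $2$ are handled directly from the matrix coordinate description in \S\ref{sec:coordrings_tsc}. The rank $0$ case contributes a single point. For rank $1$, the matroid stratum is an open subset of a coordinate-linear subspace of $\P^{n-1}$ cut out by specifying which of the $n$ coordinates of a line in $\C^n$ are nonzero. For rank $2$, the Plücker coordinates are the $2 \times 2$ minors of a $2 \times n$ matrix, and the matroid conditions amount to prescribed vanishings and nonvanishings of these bilinear expressions; writing the stratum in the matrix coordinates attached to any basis exhibits it as an open subscheme of affine space, hence smooth and irreducible.

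The substantive case is $n = 7$, where $\sQ^{\vee}$ is a $(3,7)$--matroid. Here I invoke the proposition stated just before \S\ref{sec:3leq11}, which asserts that $\pR(\sQ;\C)$ is smooth for all $\C$--realizable $(3,n)$--matroids with $n \leq 8$, and irreducible except for the Möbius--Kantor matroid, which requires $n=8$ and thus does not arise. Via the isomorphism \eqref{eq:gelfand-macpherson}, smoothness and irreducibility transfer from $\pR(\sQ^{\vee};\C)$ back to $\Gr(\sQ^{\vee};\C) \cong \Gr(\sQ;\C)$. The proof is essentially a reduction to previously established facts, so the only point requiring minor care is the uniform verification that rank $\leq 2$ matroid strata are open in affine space; there is no genuine obstacle.
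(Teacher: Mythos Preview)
Your argument is correct and follows the same route as the paper's: reduce via the duality isomorphism (Proposition~\ref{prop:duality}) to matroids of rank at most $3$ on at most $7$ elements, whose strata are known to be smooth and irreducible. The paper's proof is a single sentence to this effect; you have simply spelled out the low-rank cases and the passage between $\pR$ and $\Gr$ via \eqref{eq:gelfand-macpherson} in more detail.
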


\begin{proof}
    This follows from duality (Proposition \ref{prop:duality}) and the fact that the matroid strata for matroids of rank at most $3$ on at most  $7$ elements are smooth and irreducible.  
\end{proof}

\begin{proposition}
\label{prop:4-8-smooth}
    All realization spaces for $\C$-realizable $(4,8)$-matroids are smooth and, except for those listed in Table \ref{tab:48}, irreducible. 
\end{proposition}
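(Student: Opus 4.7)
The plan is to follow the same four-step template used in Propositions \ref{prop:3-9smooth} and \ref{prop:3-10-11smooth}, now carried out in rank $4$ with the reductions afforded by Proposition \ref{prop:4planes}. First I would apply Proposition \ref{prop:4planes} to restrict attention to simple, connected $\C$-realizable $(4,8)$-matroids satisfying the $4$ planes property: any $\sQ$ violating these conditions has smoothness (and irreducibility, in most cases) of $\Gr(\sQ;\C)$ controlled by realization spaces of $\C$-realizable rank $\leq 4$ matroids on at most $7$ elements, which have already been handled. Duality (Proposition \ref{prop:duality}) would be used to identify $\sQ$ with $\sQ^{\vee}$ when convenient, which halves the list of isomorphism classes to check, and the starting catalogue of simple $(4,8)$-matroids is extracted from \texttt{polyDB} \cite{polyDB,MatsumotoMoriyamaImaiBremner}. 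After discarding non-$\C$-realizable matroids, one is left with a finite (and, empirically, manageable) list of matroids on which direct computation is feasible.

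Next, for each surviving $\sQ$, I would compute a presentation of the coordinate ring of $\pR(\sQ;\C)$ as in \S\ref{sec:coordrings_realization_spaces}, choosing a reference circuit $\gamma_0$ of size $d+1=5$ whenever one exists, and otherwise falling back on the matroid-stratum presentation of \S\ref{sec:coordrings_tsc} (since by \eqref{eq:gelfand-macpherson} smoothness and irreducibility of $\Gr(\sQ;\C)$ and $\pR(\sQ;\C)$ coincide for connected $\sQ$). I would then apply Algorithm 6.12 of \cite{CoreyLuber} via \texttt{OSCAR} to eliminate variables systematically, reducing to a presentation of the form $U^{-1}\C[x_1,\ldots,x_m]/I$ with $I$ as small as possible. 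For the bulk of matroids one expects $I = \langle 0\rangle$, which immediately gives smooth and irreducible realization spaces; the remaining matroids, whose reduced ideals do not collapse, are precisely those destined for Table \ref{tab:48}.

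For each matroid on the exceptional list, smoothness is verified via the Jacobian criterion \cite[Corollary 16.20]{Eisenbud}: form the ideal
\begin{equation*}
    J = I + \langle \tfrac{\partial f}{\partial x_i} : f \in I,\, 1 \leq i \leq m \rangle,
\end{equation*}
and check that the saturation $(J:U^{\infty})$ is the unit ideal in $\C[x_1,\ldots,x_m]$, exactly as in the proof of Proposition \ref{prop:3-10-11smooth} and Example \ref{ex:3-10}. Irreducibility fails precisely when $I$ factors nontrivially (typically as a univariate polynomial of degree $\geq 2$ in a remaining variable, as occurred in Table \ref{tab:39} and in Example \ref{ex:3-9}), and the corresponding factorizations would be recorded in Table \ref{tab:48} together with the ambient ring, ideal, and localizing semigroup.

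The main obstacle I anticipate is twofold. Computationally, the number of $\C$-realizable simple connected $(4,8)$-matroids with the $4$ planes property may be large enough that the \texttt{OSCAR} saturation and Gr\"obner basis steps are the bottleneck; some grouping by principal extension/coextension (Corollary \ref{cor:principle_extension_smooth}) would help reduce the workload. Structurally, the genuine obstruction is the subset of matroids lacking a size-$5$ circuit, where the clean reference-circuit presentation used throughout \S\ref{sec:3n} is unavailable and one must work directly with a basis-based stratum presentation; handling these cases uniformly and certifying that the resulting ideals indeed cut out smooth (though possibly reducible) schemes is the step most likely to require case-by-case attention.
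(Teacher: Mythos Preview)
Your proposal is correct and follows essentially the same approach as the paper: reduce via Proposition~\ref{prop:4planes} to simple, connected, $\C$-realizable $(4,8)$-matroids with the $4$ planes property, then run the four-step \texttt{OSCAR} routine on each. The anticipated obstacles dissolve in practice: only $66$ matroids survive the filter, for $63$ of them the ideal collapses to $\langle 0\rangle$, the three exceptional ones (Table~\ref{tab:48}) have ideals generated by a single univariate quadratic (so smoothness is immediate without the Jacobian criterion), and the unique simple connected $(4,8)$-matroid with the $4$ planes property lacking a size-$5$ circuit is not $\C$-realizable (Example~\ref{example:weird 4-8}), so your fallback presentation is never needed.
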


\begin{proof} 
We use \texttt{OSCAR} to prove this proposition. 
Up to $\Sn{8}$--symmetry, there are $592$ simple and connected $(4,8)$--matroids. Of those, $92$ satisfy the $4$ planes property and $66$ are $\C$-realizable. (Those matroids that are not simple, not connected, or do not satisfy the $4$ planes property have smooth and irreducible realization spaces by Proposition \ref{prop:4planes}).
For $63$ of these, we find a reference circuit such that $I_{\sQ}$ reduces to $\langle 0 \rangle$ in Step 3 from \S \ref{sec:3leq11}. So these realization spaces are smooth and irreducible.  The remaining $3$ are listed in Table \ref{tab:48}. The presentations listed in this table are obtained by using $\{1,2,3,4,5\}$ as a reference circuit and applying Step 3 from \S\ref{sec:3leq11}. As these ideals are each generated by a single degree 2 univariate polynomial, these realization spaces are smooth with 2 irreducible components. 
\end{proof}

\begin{table}[h]\label{table:4-8}
\centering
	\begin{tabular}{ |c|c|c|c| }
	\hline
	 \multirow{2}{5cm}{\centering \textbf{Planes} $\pL(\sQ)$} & \textbf{Ambient} & \multirow{2}{1.5cm}{\centering \textbf{Ideal}} & \multirow{2}{3cm}{\centering \textbf{Semigroup}} \\
   & \textbf{ring} & &\\
\hline 
   \multirow{2}{6cm}{\centering 3467, 2567, 2458, 2378, 1568, 1357, 1348, 1247, 1236} & \multirow{2}{1.5cm}{\centering $\C[x]$} & \multirow{2}{3cm}{\centering $x^2-3x+1$} & \multirow{2}{3cm}{\centering { $x, x-2,$ \\ $ x-1, x-3$}} \\
    & & & \\
	\hline
  \multirow{2}{6cm}{\centering 4568, 3467, 2567, 2378, 1357,
  1348, 1258, 1247, 1236} & \multirow{2}{1.5cm}{\centering $\C[x]$} & \multirow{2}{3cm}{\centering $3x^2-3x+1$} & \multirow{2}{4cm}{\centering { $x,x-1,3x-1,$ \\ $3x-2,2x-1, x-3$}} \\
    & & & \\
	\hline
 \multirow{2}{6cm}{\centering 12367, 5678, 3456, 2478, 2358, 1457, 1248, 1268, 1256, 1246} & \multirow{2}{1.5cm}{\centering $\C[x]$} & \multirow{2}{3cm}{\centering $3
x^2-x+1$} & \multirow{2}{3cm}{\centering {$x,x-1$}} \\
    & & & \\
	\hline 
	\end{tabular}
	\caption{$\C$-realizable $(4,8)$--matroids with disconnected realization spaces}
	\label{tab:48}
\end{table}

\noindent Interestingly, there is exactly one simple and connected $(4,8)$--matroid satisfying the $4$ planes property that does \emph{not} have a circuit of size $5$. 

\begin{example}\label{example:weird 4-8}
Let $\sQ$ be the matroid of the 8 points in $(\F_2)^{3}$, i.e., the matroid of the $\F_{2}$--valued matrix
\begin{equation*}
    A = \begin{bmatrix}
       0 &1 & 0 &0 &1 &1 &0 &1\\
       0 &0 &1 &0 &1 &0 &1 &1\\
       0 &0 &0 &1 &0 &1 &1 &1\\
        1 &1 &1 &1 &1 &1 &1 &1
     \end{bmatrix}.
\end{equation*}
All circuits of this matroid have size at most $4$. While realizable over $\F_2$, this matroid is not realizable over $\C$. 
\end{example}

\begin{proposition}
\label{prop:4-9-smooth}
    All realization spaces for $\C$-realizable $(4,9)$--matroids are smooth.
\end{proposition}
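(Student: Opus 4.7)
The plan is to follow the same four-step strategy used to prove Propositions \ref{prop:3-10-11smooth} and \ref{prop:4-8-smooth}, combining the reductions of Proposition \ref{prop:4planes} with direct computer verification in \texttt{OSCAR}. First I would invoke Proposition \ref{prop:4planes}, together with Propositions \ref{prop:directsum} and the already-established smoothness for $\C$-realizable rank $4$ matroids on $\leq 8$ elements (Proposition \ref{prop:4-8-smooth}), to dispose of every $(4,9)$-matroid that is either not simple, not connected, fails the 3 planes property, or fails the 4 planes property. This reduces the problem to the much smaller collection of $\C$-realizable simple connected $(4,9)$-matroids satisfying the 4 planes property, which can be enumerated by filtering the catalogue from \cite{MatsumotoMoriyamaImaiBremner} via \texttt{polyDB}.

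For each remaining matroid $\sQ$, the plan is to choose a size-$5$ circuit $\gamma_0$ as the reference circuit and compute the presentation $S_{\sQ} = U_{\sQ}^{-1} B_{\sQ}/I_{\sQ}$ of $\C[\pR(\sQ;\C)]$ as described in \S\ref{sec:coordrings_realization_spaces}. I would then apply \cite[Algorithm~6.12]{CoreyLuber} to systematically eliminate variables so that $I_{\sQ}$ collapses either to $\langle 0\rangle$ (in which case $\pR(\sQ;\C)$ is smooth, being an open subscheme of an affine space) or to a principal ideal $\langle f\rangle$ in some localized polynomial ring $U^{-1}\C[x_1,\ldots,x_m]$. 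In the latter case, smoothness can be certified by the Jacobian test exactly as in the proof of Proposition \ref{prop:3-10-11smooth}: form $J = \langle f,\partial f/\partial x_1,\ldots,\partial f/\partial x_m\rangle$, compute its saturation by $U$, and verify that the result is the unit ideal in $\C[x_1,\ldots,x_m]$.

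The main obstacle I anticipate is two-fold. First, for $(4,9)$ the number of simple connected matroids satisfying the $4$ planes property is considerably larger than at $(4,8)$, and a handful may produce ideals that do not reduce to a principal one after elimination; for those matroids one would need to compute the full singular locus ideal via the Jacobian criterion \cite[Corollary~16.20]{Eisenbud} applied to the multi-generator presentation, and then saturate by $U_{\sQ}$. Second, as flagged in the footnote to \S\ref{sec:coordrings_realization_spaces} and illustrated by Example \ref{example:weird 4-8}, some simple $(4,9)$-matroids with the $4$ planes property may lack a circuit of size $5$, so the reference-circuit construction does not directly apply. For such matroids, the plan is to fall back on duality (Proposition \ref{prop:duality}) and work with $\sQ^{\vee}$, a rank-$5$ matroid on $9$ elements, for which one may instead find a reference basis and proceed analogously through the matroid stratum presentation of \S\ref{sec:coordrings_tsc}; in practice one expects, as in Example \ref{example:weird 4-8}, that almost all such exotic matroids are not $\C$-realizable and are eliminated before the coordinate-ring step. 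After these pieces are assembled, the conclusion follows by aggregating the finitely many \texttt{OSCAR} verifications and archiving the scripts in the github repository alongside those for Propositions \ref{prop:3-10-11smooth} and \ref{prop:4-8-smooth}.
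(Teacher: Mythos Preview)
Your proposal is correct and follows essentially the same approach as the paper: reduce via Proposition~\ref{prop:4planes} to simple connected $\C$-realizable $(4,9)$-matroids satisfying the $4$ planes property, enumerate them from the database, and for each one find a reference circuit so that Step~3 reduces $I_{\sQ}$ to a principal ideal, then apply the Jacobian test as in Proposition~\ref{prop:3-10-11smooth}. The two obstacles you anticipate do not in fact arise---the paper reports that for all $39\,246$ relevant matroids a suitable reference circuit exists and the ideal always reduces to a principal one.
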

\begin{proof}
    Up to $\Sn{9}$--symmetry, there are $185\,911$ simple and connected matroids $(4,9)$--matroids, $61\, 228$  satisfy the $4$ planes property, and $39\,246$ of these are $\C$-realizable. 
    For each of these matroids, we find a reference circuit so that, after applying Step 3 from \S\ref{sec:3leq11}, the ideal $I_{\sQ}$ reduces to a principal ideal. We verify that these spaces are smooth using the Jacobian criterion as outlined in the proof of Proposition \ref{prop:3-10-11smooth}. 
\end{proof}
\section{Flag varieties}\label{flag_varieties}
The \emph{complete flag variety} $\Fl(n;\F)$ paramaterizes sequences (also referred to as \emph{flags}) of nested linear subspaces $V_{\bullet} \coloneqq V_1\subset V_2\subset\dots\subset V_n = \F^n$, such that $\dim(V_d) = d$. From $V_{\bullet}$ we obtain a sequence of matroids on $[n]$, $\sQ_{\bullet} = (\sQ_{1},\dots,\sQ_{n})$, where $\sQ_{d} = \sQ(V_{d})$. The sequence $\sQ_{\bullet}$ is known as an \emph{$\F$-realizable complete flag matroid} on $[n]$, and $V_{\bullet}$ is an \emph{$\F$-realization} of $\sQ_{\bullet}$. In either $V_{\bullet}$ or $\sQ_{\bullet}$, the  $d$-th element of the flag is called the \emph{$d$-th constituent}.  The \emph{bases} of $\sQ_{\bullet}$ are simply $\mathcal{B}(\sQ_{\bullet}) = \cup^{n}_{i=1}\mathcal{B}(\sQ_i)$. Similar to usual matroids, flag matroids capture the combinatorics of nested linear subspaces. 

\begin{remark}\label{rem:flag_matroids}
More general flag varieties exist, where we allow constituents to skip dimensions. Similarly, flag matroids on $[n]$ need not have $n$ constituents. Like usual matroids, flag matroids admit several cryptomorphic definitions which are strictly combinatorial. For example, a sequence of matroids $(\sQ_{i},\dots,\sQ_{k})$ is a flag matroid if the set of flats of $\sQ_{i}$ is a subset of the flats of $\sQ_{i+1}$. Not all flag matroids are realizations of flags of vector spaces. Furthermore, there exist non-realizable examples where each constituent matroid of the flag is realizable. For a  comprehensive treatment of flag matroids, see \cite[Chapter 2]{BorovikGelfandWhite:2003}. 
\end{remark}

Analogous to the Grassmannian, the flag variety $\Fl(n;\F)$ admits a decomposition into strata indexed by realizable complete flag matroids on $[n]$. Denote by $\Fl(\sQ_{\bullet};\F)$ the subvariety of $\Fl(n;\F)$ whose closed points correspond to realizations of $\sQ_{\bullet}$. A flag $V_{\bullet} = V_{1}\subset\dots\subset V_{n} = \F^{n}$ may be recorded as an $n\times n$ $\F$-matrix $A_{V_{\bullet}}$, where $V_{d}$ is the span of the first $d$ rows of $A_{V_{\bullet}}$. Hence, we embed $\Fl(n;\F)$ into $\P_{\F}^{n-1}\times \P^{\binom{n}{2}-1}_{\F}\times\dots\times \P_{\F}^{n-1}$ by concatenating the Pl\"ucker coordinates of the constituents of each flag.

\begin{proposition}\label{prop:flag_iso}
    Let $\sQ$ be a rank $d$ matroid on $[n]$, realizable over $\F$. Then there exists a complete flag matroid $\sQ_{\bullet} =(\sQ_1,\dots,\sQ_n)$ such that $\sQ_d=\sQ$, and $\Gr(\sQ;\F)\cong \Fl(\sQ_{\bullet};\F)$.
\end{proposition}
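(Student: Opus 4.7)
The plan is to realize $\sQ_\bullet$ as the matroid of a canonical flag extension of a matrix representative of $V$. After permuting $[n]$ I may assume $\lambda_0 = [d]$ is a basis of $\sQ$, so every $V\in \Gr(\sQ;\F)$ has a unique representing matrix $A=[I_d\mid X]$. Enlarging this to the $n\times n$ matrix $\widetilde{A}=\left[\begin{smallmatrix} I_d & X \\ 0 & I_{n-d}\end{smallmatrix}\right]$ and letting $V_i$ be the row span of the top $i$ rows of $\widetilde{A}$, I obtain a complete flag $V_\bullet$ with $V_d=V$. Explicitly, $V_i = V\cap \operatorname{span}(e_j : j\notin\{i+1,\dots,d\})$ for $i\leq d$, and $V_i = V+\operatorname{span}(e_{d+1},\dots,e_i)$ for $i\geq d$.

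Next I would compute $\sQ_i := \sQ(V_i)$ from $\widetilde{A}$ and verify it depends only on $\sQ$, not on $X$. Using block row reduction for $i\geq d$ and Laplace expansion along the identity block for $i\leq d$, one finds explicit descriptions: for $i\geq d$, $\sQ_i$ is the direct sum of the deletion $\sQ\setminus\{d+1,\dots,i\}$ with the matroid on $\{d+1,\dots,i\}$ whose unique basis is the whole ground set; for $i\leq d$, $\sQ_i$ has $\{i+1,\dots,d\}$ as loops and restricts to the contraction $\sQ/\{i+1,\dots,d\}$ on the remaining elements. Hence $\sQ_\bullet=(\sQ_1,\dots,\sQ_n)$ is a well-defined realizable complete flag matroid with $\sQ_d=\sQ$.

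Finally, the assignment $V\mapsto V_\bullet$ is given by explicit polynomial formulas in $X$ (the Pl\"ucker coordinates of each $V_i$ are minors of the top $i$ rows of $\widetilde{A}$), so it defines a morphism $\psi\colon \Gr(\sQ;\F)\to \Fl(\sQ_\bullet;\F)$, and the candidate inverse is the $d$-th forgetful projection $\pi_d\colon V'_\bullet\mapsto V'_d$. The composition $\pi_d\circ\psi$ is the identity, and for $\psi\circ\pi_d=\mathrm{id}$ I would use the standard dictionary that $e_j$ is a coloop of $\sQ(W)$ iff $e_j\in W$, and dually a loop iff $W\subseteq\{v:v_j=0\}$. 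Combined with the loop/coloop description of $\sQ_i$ above, this gives $V'_d+\operatorname{span}(e_{d+1},\dots,e_i)\subseteq V'_i$ when $i\geq d$ and $V'_i\subseteq V'_d\cap \operatorname{span}(e_j:j\notin\{i+1,\dots,d\})$ when $i\leq d$, and a dimension count using the pivots of $[I_d\mid X]$ forces equality. The main obstacle is this rigidity step—ensuring that the matroidal data $\sQ_\bullet$ makes the middle term $V'_d$ uniquely determine the full flag $V'_\bullet$—but once the loop/coloop translation is in hand the remaining work is straightforward linear algebra.
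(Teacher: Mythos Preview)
Your proposal is correct and follows essentially the same approach as the paper: extend $[I_d\mid X]$ to the block matrix $\left[\begin{smallmatrix} I_d & X \\ 0 & I_{n-d}\end{smallmatrix}\right]$, identify each constituent $\sQ_i$ as a deletion or contraction of $\sQ$ padded with coloops or loops, and then show that any flag realizing $\sQ_\bullet$ is determined by its $d$-th constituent. The only cosmetic difference is that the paper invokes a matrix-normalization construction from \cite{corey_olarte_2022} for the rigidity step, whereas you argue directly via the loop/coloop dictionary ($e_j\in W$ iff $j$ is a coloop, $W\subset\{v:v_j=0\}$ iff $j$ is a loop) together with a dimension count using the pivot basis $[d]$.
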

\begin{proof}
    Fix a $(d,n)$-matroid $\sQ$ and let $W\in\Gr(\sQ;\F)$.  As before, we can assume $[d]$ is a basis of $\sQ$. Then $W$ is given by the row span of the $d\times n$ matrix $A_{W} = \begin{bmatrix} I_d &A \end{bmatrix}$. We extend $W$ to a full flag $W_{\bullet}\in\Fl(n;\F)$ as follows.
    
    Define the $n\times n$ $\F$-matrix \begin{equation}\label{eq:flag_realization}
        A_{W_{\bullet}} = \begin{bmatrix}
        I_d &A\\
        0 &I_{n-d}
    \end{bmatrix}.
    \end{equation}
We obtain the full flag $W_{\bullet} = W_1\subset\dots\subset W_n$ where $W_i$ is the span of the first $i$ rows of $A_{W_{\bullet}}$. Then $W_{\bullet}$ is a realization of the flag matroid $\sQ_{\bullet} = (\sQ_1,\dots,\sQ_n)$, where $W_i\in\Gr(\sQ_i;\F)$. Clearly $\sQ_d = \sQ$.

We then have \begin{equation}\label{eq:flag extension}
    \mathcal{B}(\sQ_i) = \begin{cases}
    \mathcal{B}(\sQ/\{i+1,\dots,d\}) \text{ if } i<d\\
    \mathcal{B}(\sQ)\text{ if } i = d\\
     \{\lambda\cup \{d+1,\dots,i\}:\lambda\in\mathcal{B}(\sQ_{d}\setminus  \{d+1,\dots,i\})\} \text{ if } i>d.
\end{cases}
\end{equation}

We can see that the map $\Gr(\sQ;\F)\to\Fl(\sQ_{\bullet};\F)$ given by $A_{W}\mapsto A_{W_{\bullet}}$, where $A_{W_{\bullet}}$ is of the form in Equation \ref{eq:flag_realization}, is injective. To see that it is surjective, let $V_{\bullet}\in\Fl(\sQ_{\bullet};\F)$. By \ref{eq:flag extension}, the \emph{standard flag} $(\{1\},\{1,2\},\dots,\{1,\dots,n\})\in\pB(\sQ_{\bullet})$.  We therefore apply the construction in \cite[\S5.1]{corey_olarte_2022} to understand $A_{V_{\bullet}}$. Consider the entry $a_{i,j}$ of $A_{V_{\bullet}}$. Then we have $a_{i,j} \neq 0$ if and only if $[i-1]\cup j\in\pB(\sQ_{i})$. But for $i>d$, $i$ is a coloop of $\pB(\sQ_{i})$. Therefore we have $a_{i,j} = 0$ for all $j>i>d$. Then the realization matrix $A_{V_{\bullet}}$ is determined by the $d$-th constituent of $V_{\bullet}$, and is of the form in Equation \ref{eq:flag_realization}. Hence the mapping $\Gr(\sQ;\F)\to\Fl(\sQ_{\bullet};\F)$ given by $A_{W}\mapsto A_{W_{\bullet}}$ is a bijection, and the Pl\"ucker embedding induces an isomorphism of affine schemes.
\end{proof}
\noindent The following is immediate.
\begin{corollary}\label{corr:singular_flag_stratum}
    Let $\sQ$ be a rank $d$ matroid on $[n]$, representable over $\F$. Then the flag stratum $\Fl(\sQ_{\bullet};\F)\subset \Fl(n;\F)$, such that $\mathcal{B}(\sQ_{\bullet})$ is given by Equation \ref{eq:flag extension}, is singular (resp. irreducible) if and only if $\Gr(\sQ;\F)$ is singular (resp. irreducible).
\end{corollary}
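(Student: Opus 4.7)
The plan is to derive this corollary directly from Proposition \ref{prop:flag_iso}, which is the substantive content. That proposition already constructs, for a given $\F$-realizable rank-$d$ matroid $\sQ$ on $[n]$, a specific complete flag matroid $\sQ_{\bullet}=(\sQ_1,\dots,\sQ_n)$ with $d$-th constituent $\sQ_d=\sQ$ (whose bases are exactly those given by Equation \eqref{eq:flag extension}), and produces an isomorphism of affine schemes $\Gr(\sQ;\F)\xrightarrow{\sim}\Fl(\sQ_{\bullet};\F)$ via the explicit block-matrix formula in \eqref{eq:flag_realization}. So the only remaining task is to observe that singularity and irreducibility are invariants of schemes under isomorphism.

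Concretely, I would proceed in two short steps. First, invoke Proposition \ref{prop:flag_iso} to fix the flag matroid $\sQ_{\bullet}$ whose bases satisfy \eqref{eq:flag extension} and to obtain the isomorphism of affine schemes $\Gr(\sQ;\F)\cong \Fl(\sQ_{\bullet};\F)$ induced by $A_W\mapsto A_{W_\bullet}$. Second, note that an isomorphism of schemes induces a bijection between their singular loci (since the local rings at corresponding points are isomorphic) and a bijection between their irreducible components (since the underlying topological spaces are homeomorphic). Consequently, $\Fl(\sQ_{\bullet};\F)$ is smooth if and only if $\Gr(\sQ;\F)$ is smooth, and likewise for irreducibility.

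There is no real obstacle here; all of the technical work, including the verification that the realization matrix $A_{V_\bullet}$ of any flag in $\Fl(\sQ_{\bullet};\F)$ is forced into the block form \eqref{eq:flag_realization} (via the coloop argument applied to indices $i>d$), is already discharged inside the proof of Proposition \ref{prop:flag_iso}. The corollary is thus a one-line consequence: transport the properties across the isomorphism. The only minor thing to flag for the reader is that the specific flag matroid $\sQ_{\bullet}$ being referenced is the canonical extension of $\sQ$ determined by \eqref{eq:flag extension}, rather than an arbitrary flag matroid whose $d$-th constituent happens to be $\sQ$.
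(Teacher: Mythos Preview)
Your proposal is correct and matches the paper's approach: the paper states the corollary as ``immediate'' from Proposition~\ref{prop:flag_iso}, and your argument---invoking the isomorphism $\Gr(\sQ;\F)\cong\Fl(\sQ_{\bullet};\F)$ and noting that singularity and irreducibility are preserved under isomorphism---is precisely the intended justification.
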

Hence, if we take $d=3$ and $\sQ_d = \sQ_{3,n,\sing}$ from \S \ref{sec:3-12}, Corollary  \ref{corr:singular_flag_stratum} implies the existence of a singular flag matroid stratum in $\Fl(n;\C)$ for $n\geq 12$.
Since  singular flag matroid strata need not be obtained in this manner, it is reasonable to ask if there exists $n<12$ such that $\Fl(n;\C)$ contains a singular flag matroid stratum.

\section{Singular initial degenerations}

We begin this section with a brief overview of initial degenerations of Grassmannians and their relation to (valuated) matroids as described in \cite{CoreyGrassmannians}. In \S\ref{sec:singularInitialDeg312}, we develop refined techniques to study initial degenerations corresponding to corank vectors of simple rank-3 matroids on the way to proving Theorem \ref{thm:not-shon-geq-12}.

\subsection{Initial degenerations and tropicalization}
\label{sec:initial_degenerations}
Let $S = \C[x^{\pm}_{1},\dots,x^{\pm}_{a}]$, and given $\su = (u_1,\ldots,u_a) \in \Z^{a}$ set $x^{\su} = x_{1}^{u_1}\cdots x_{a}^{u_a}$. Given $f\in S$ and $\sw\in (\R^{a})^{\vee}$, the $\sw$-\emph{initial from} of $f$ is 
\begin{equation*}
\init_{\sw}f = \sum\limits_{\substack{\langle \su,\sw\rangle\\\text{ is minimal}}}c_{\su}x^{\su}\hspace{.5 cm} 
\text{where} 
\hspace{.5 cm} f = \sum c_{\su}x^{\su},
\end{equation*}
\noindent where each $c_{\su}$ in the second sum is nonzero. Similarly, for a variety $X = V(I)$ defined by an ideal $I\subset S$, we obtain 
\begin{equation*}
\init_{\sw} I =\langle \init_{\sw}f\, : \, f\in I\rangle\hspace{.5 cm}\text{ and }\hspace{.5 cm}\init_{\sw}X = V(\init_{\sw}I),    
\end{equation*}
the $\sw$-\emph{initial ideal} of $I$ and $\sw$-\emph{initial degeneration} of $X$ respectively. The \emph{tropicalization of $X$} is 
\begin{equation*}
\Trop\, X = \{\sw\in (\R^{a})^{\vee} \, :\, \init_{\sw}I \neq\langle 1 \rangle\}.
\end{equation*}
When $X = \Gr^{\circ}(d,n)$, we write $\Trop\, \Gr^{\circ}(d,n) = \TGr^{\circ}(d,n)$.  The set $\TGr^{\circ}(d,n)$ is invariant under translation by $\mathbf{1} = (1,\ldots,1)$, so we view $\TGr^{\circ}(d,n) \subset (\R^{\binom{[n]}{d}})^{\vee}/\Zone$. 

\subsection{Matroidal subdivisions of hypersimplices}\label{sec:matroid_subdivisions}
Denote by $\epsilon_1,\ldots,\epsilon_n$ the standard basis of $\Z^{n}$ and $\epsilon_1^*,\ldots,\epsilon_n^{*}$ its dual basis in $(\Z^{n})^{\vee}$. Given $\lambda = \{i_1,\ldots,i_d\} \in \binom{[n]}{d}$, set  $\epsilon_{\lambda} = \epsilon_{i_1} + \cdots + \epsilon_{i_d}$ and $\epsilon_{\lambda}^{*} = \epsilon_{i_1}^{*} + \cdots + \epsilon_{i_d}^{*}$. For a $(d,n)$-matroid $\sQ$, its  \emph{matroid polytope} $\Delta(\sQ)\subset \R^{n}$ is
\begin{equation*}
\Delta(\sQ) = \conv( \epsilon_{\lambda} \, : \, \lambda\in \pB(\sQ)).
\end{equation*}

Given a finite set $E$ and a nonnegative integer $d\leq |E|$, the rank-$d$ \textit{uniform matroid} on $E$, denoted $\sU_{d,E}$, is the matroid whose bases are the $r$-element subsets of $E$. When $E=[n]$, we simply write $\sU_{d,n}$. The matroid polytope of $\sU_{d,n}$ is called the \textit{hypersimplex}, and is denoted by $\Delta(d,n)$. The vertices of $\Delta(d,n)$ are exactly the vectors $\epsilon_{\lambda}$ for $\lambda \in \binom{[n]}{d}$.  

Given $\sw\in (\R^{\binom{[n]}{d}})^{\vee}/\Zone$, the \textit{regular subdivision} of $\Delta(d,n)$ induced by $\sw$, denoted by $\pQ(\sw)$, is obtained by lifting the vertex $\epsilon_{\lambda}$ to height $\sw_{\lambda}$ in $\R^{n}\times \R$, and projecting the lower faces of the convex hull of the resulting point configuration back to $\Delta(d,n)$; for a comprehensive treatment of regular subdivisions, see \cite[Chapter~2]{DeLoeraRambauSantos} or  \cite[Chapter~7]{GelfandKapranovZelevinsky} (where they are called \textit{coherent subdivisions}). By \cite[Proposition~2.2]{Speyer2008}, if $\sw \in \TGr^{\circ}(d,n)$, then $\pQ(\sw)$ is \textit{matroidal}, i.e., the cells of the subdivision are polytopes of $(d,n)$-matroids. We denote these matroids by $\sU_{\sw}^{\sv}$, and their bases are given by  
\begin{equation*}
    \pB(\sU_{\sw}^{\sv}) = \{ \lambda \in \textstyle{\binom{[n]}{d}} \, : \, \bk{\epsilon_{\lambda}}{\sv} + \sw_{\lambda} \text{ is minimal}   \}
\end{equation*}
for $\sv \in (\R^{n})^{\vee}$. 

\subsection{Inverse limits of matroid strata}\label{sec:inverse_limits_tight_spans}
 Let $\sw\in\TGr^{\circ}(d,n)$.   We view $\pQ(\sw)$ as a poset via the face relation. If $\Delta(\sQ')$ is a face of $\Delta(\sQ)$, then the inclusion of polynomial rings $B_{\sQ'} \subset B_{\sQ}$ induces a morphism of matroid strata 
\begin{equation}
\label{eq:face-maps}
    \varphi_{\sQ,\sQ'}:\Gr(\sQ;\F) \to \Gr(\sQ';\F)
\end{equation}  
(provided these matroids are $\F$-realizable), see \cite[Proposition~I.6]{Lafforgue2003} or \cite[Proposition~3.1]{CoreyGrassmannians}. This yields a diagram of matroid strata over the poset $\pQ(\sw)$, denote the corresponding (finite) inverse limit by $\Gr(\sw)$. The following result in \cite{CoreyGrassmannians} is essential in our construction of singular initial degenerations.

\begin{theorem}\label{thm:closed_immersion}
Let $\sw\in\TGr^{\circ}(d,n)$. The inclusions 
\begin{equation*}
    \C[p_{\lambda} \, : \, \lambda \in \pB(\sQ)] \subset \C[p_{\lambda} \, : \, \lambda \in \textstyle{\binom{[n]}{d}}]
\end{equation*}
for $\Delta(\sQ) \in \pQ(\sw)$ induce a closed immersion 
\begin{equation*}
\psi_{\sw}: \init_{\sw}\Gr^{\circ}(d,n)\hookrightarrow\Gr(\sw).
\end{equation*} 
\end{theorem}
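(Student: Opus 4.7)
The plan is to build $\psi_\sw$ through the universal property of the inverse limit $\Gr(\sw) = \varprojlim \Gr(\sQ;\C)$, where the limit runs over cells $\Delta(\sQ) \in \pQ(\sw)$ ordered by face inclusion. For each such cell, I would first construct a morphism $\psi_{\sw,\sQ} : \init_\sw \Gr^\circ(d,n) \to \Gr(\sQ;\C)$ dual to the ring homomorphism sending the Pl\"ucker variable $p_\lambda$ (for $\lambda \in \pB(\sQ)$) to its class in $\C[\init_\sw \Gr^\circ(d,n)]$. Compatibility of the $\psi_{\sw,\sQ}$ along the face morphisms $\varphi_{\sQ,\sQ'}$ of \eqref{eq:face-maps} is immediate because both sides are induced by variable-set inclusions of Pl\"ucker coordinates. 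Thus the bulk of the work is verifying that each $\psi_{\sw,\sQ}$ is well defined and that the resulting $\psi_\sw$ is a closed immersion.

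The crux is the following identification. For every generator $f$ of the Pl\"ucker ideal from \eqref{eq:pluecker-ideal-gens},
\begin{equation*}
    \init_\sw f \;=\; \pi_\sQ(f),
\end{equation*}
where $\pi_\sQ$ sends $p_\mu \mapsto 0$ for $\mu \notin \pB(\sQ)$ and is the identity otherwise. To prove it, pick $\sv$ with $\sQ = \sU_\sw^\sv$, and let $m = \min_\mu\!\bigl(\sw_\mu + \langle \epsilon_\mu, \sv\rangle\bigr)$, a minimum attained exactly on $\pB(\sQ)$. Each term $p_{\lambda_\alpha} p_{\mu_\alpha}$ of $f$ satisfies $\epsilon_{\lambda_\alpha} + \epsilon_{\mu_\alpha} = \epsilon_I + \epsilon_J$ for a fixed pair $I, J$, so summing the two defining inequalities gives
\begin{equation*}
    \sw_{\lambda_\alpha} + \sw_{\mu_\alpha} \;\geq\; 2m - \langle \epsilon_I + \epsilon_J, \sv\rangle,
\end{equation*}
with equality if and only if both $\lambda_\alpha$ and $\mu_\alpha$ lie in $\pB(\sQ)$. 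Hence the $\sw$-minimum terms of $f$ are exactly those with both indices as bases of $\sQ$, matching $\pi_\sQ(f)$. (If no such term exists then $\pi_\sQ(f) = 0$, which is trivially in $\init_\sw I_{d,n}$.)

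Given this, the generators of the defining ideal of $S_\sQ$, namely $\pi_\sQ(f)$ as $f$ ranges over Pl\"ucker generators, all lie in $\init_\sw I_{d,n}$, while the $p_\lambda$ for $\lambda \in \pB(\sQ)$ are already units in the target; this establishes that $\psi_{\sw,\sQ}$ is well defined. The universal property of the inverse limit then produces $\psi_\sw$. For the closed-immersion conclusion, I would verify that the dual map of coordinate rings is surjective: every $\epsilon_\lambda$ is an extremal point of $\Delta(d,n)$, hence a vertex of at least one maximal cell $\Delta(\sQ) \in \pQ(\sw)$, so $p_\lambda$ and its inverse are in the image of $S_\sQ$, and together they generate the target ring. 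The main obstacle I anticipate is precisely the key identity — particularly arguing that only the multi-homogeneous Pl\"ucker generators (and not arbitrary members of $I_{d,n}$) need to be tested, which is what makes the combined $(\sw+\sv)$-weight argument collapse to a clean statement.
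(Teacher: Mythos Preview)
The paper does not prove Theorem~\ref{thm:closed_immersion}; it is quoted without proof from \cite{CoreyGrassmannians} (see the sentence immediately preceding the theorem statement). There is therefore no in-paper argument to compare your proposal against.

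That said, your outline is correct and is the natural approach, and it matches the strategy one finds in the cited source. The weight inequality you derive, together with the constancy of $\epsilon_{\lambda_\alpha}+\epsilon_{\mu_\alpha}$ across the terms of a quadratic Pl\"ucker generator, gives exactly the dichotomy you need: for each cell $\sQ=\sU_\sw^\sv$ either $\pi_\sQ(f)=0$ or $\pi_\sQ(f)=\init_\sw f$, and in both cases $\pi_\sQ(f)\in\init_\sw I_{d,n}$. The concern you raise at the end is not an actual obstacle. Since $\pi_\sQ$ is a ring homomorphism (evaluation of some variables to zero), the image of the Pl\"ucker ideal under $\pi_\sQ$ is already generated by the images of the generators in \eqref{eq:pluecker-ideal-gens}; you never need the identity for arbitrary elements of $I_{d,n}$. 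Your surjectivity argument for the closed-immersion step is also sound: both schemes are affine (the inverse limit is finite over affine schemes), every vertex $\epsilon_\lambda$ of $\Delta(d,n)$ lies in some maximal cell $\Delta(\sQ)$ of $\pQ(\sw)$, and hence each Laurent generator $p_\lambda^{\pm 1}$ of the target coordinate ring lies in the image of some $S_\sQ$ via the structure map of the limit.
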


\noindent The dual graph of $\pQ(\sw)$, denoted $\Gamma(\sw)$, is the graph that has a vertex $v_{\sQ}$ for each maximal cell $\Delta(\sQ)$ in $\pQ(\sw)$, and $v_{\sQ_1}$ is connected to $v_{\sQ_2}$ by an edge if the polytopes $\Delta(\sQ_1)$ and $\Delta(\sQ_2)$ share a common facet. The graph $\Gamma(\sw)$ is a subposet of $\pQ(\sw)$ with the relation  $e\prec v$ if $v$ is a vertex of the edge $e$. By \cite[Proposition~C.12]{CoreyGrassmannians} (Appendix by Cueto), the inverse limit over this smaller poset is isomorphic to $\Gr(\sw)$.

Determining whether the initial degeneration $\init_{\sw}\Gr^{\circ}(d,n)$ is singular is a challenging computational problem. In contrast, determining whether the finite limit $\Gr(\sw)$ is singular is far more tractable especially when $\Gamma(\sw)$ is a star-shaped tree. This means that the maximal cells of $\pQ(\sw)$ are the matroid polytopes of matroids $\sQ$, $\sL_1$, $\ldots$, $\sL_k$ such that $\Delta(\sL_i)$ and $\Delta(\sL_j)$ do not share a common facet (for $i\neq j$) and  $\Delta(\sQ)$ shares a facet with each $\Delta(\sL_i)$. The facet common to $\Delta(\sQ)$ and $\Delta(\sL_{i})$ is itself a matroid polytope, denote the corresponding matroid by $\sL_i'$. Then the inverse limit $\Gr(\sw)$ is isomorphic to a fiber product
\begin{equation*}
    \Gr(\sw) \cong \Gr(\sQ) \times_{X'} X, \quad \text{where} \quad X = \prod_{i=1}^{k} \Gr(\sL_i) \quad \text{and} \quad X' = \prod_{i=1}^{k} \Gr(\sL_i'). 
\end{equation*}
The morphisms $\Gr(\sQ) \to X'$ and $X\to X'$ are naturally defined using the the face morphisms in \eqref{eq:face-maps}. If $X\to X'$ is smooth and surjective, then its pullback by $\Gr(\sQ) \to X'$ produces a smooth and surjective morphism $\Gr(\sw)\to \Gr(\sQ)$ by \cite[Proposition~7.3.3]{EGAIV}. Thus, to prove that $\Gr(\sw)$ is singular, we need only show that $\Gr(\sQ)$ is singular and that the morphism $X\to X'$ is smooth and surjective. If $\psi_{\sw}$ is an isomorphism (which need not be the case, see \cite[Example~8.2]{CoreyGrassmannians}), then $\init_{\sw}\Gr^{\circ}(d,n)$ is also singular. 

In the next section, we review the notions of corank vector and paving matroid. We show that the regular subdivision of $\Delta(d,n)$ induced by a corank vector $\sw$ of a paving matroid $\sQ$ is matroidal and its dual graph $\Gamma(\sw)$ is a star-shaped tree with $\sQ$ as the matroid corresponding to the central node. Following the above strategy, we prove in Proposition \ref{prop:compareLimitToCenter} that the scheme $\Gr(\sw)$ has the same singularity type as $\Gr(\sQ)$. To prove Theorem \ref{thm:not-shon-geq-12}, we show that the matroids from Theorem \ref{thm:singular3-n}  are paving and, when $\sw$ is the corank vector of one of these specific matroids, we show that $\init_{\sw}\Gr^{\circ}(d,n) \to \Gr(\sw)$ is an isomorphism.

\subsection{Cyclic flats and paving matroids}
\label{sec:pavingMatroids}
Let $\sQ$ be a $(d,n)$--matroid. A \textit{cyclic flat} of $\sQ$ is a flat $\eta$ such that $\sQ_{|\eta}$ has no coloops, equivalently, it is a flat that is a union of circuits \cite[p.76]{Oxley}. A \textit{cyclic hyperplane} is a hyperplane that is a cyclic flat, and the set of cyclic hyperplanes is denoted by $\pZ^{1}(\sQ)$.

The matroid $\sQ$ is \textit{paving} if each circuit of $\sQ$ has at least $d$ elements.  For rank at least $2$, paving matroids must be loopless, and for rank at least $3$, paving matroids must be simple. It is conjectured that asymptotically almost all matroids are paving \cite{CarpoRota, MayhewNewmanWelshWhittle}. In terms of cyclic flats, paving matroids admit the following characterization.
\begin{proposition}
\label{prop:pavingCyclicFlats}
    Suppose $2\leq d\leq n$ and let $\sQ$ be a loopless $(d,n)$--matroid. Then $\sQ$ is paving if and only if every cyclic flat of $\sQ$ has rank $0$, $d-1$, or $d$. 
\end{proposition}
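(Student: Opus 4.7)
The plan is to prove both directions using the simple observation that for any circuit $\gamma$, the closure $\cl_{\sQ}(\gamma)$ is a cyclic flat of rank $|\gamma|-1$.

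For the forward direction, suppose $\sQ$ is paving and let $\eta$ be a cyclic flat of rank $k$. Since $\eta$ is a union of circuits (the standard cryptomorphism for cyclic flats, equivalent to $\sQ_{|\eta}$ having no coloops), if $k \geq 1$ then $\eta$ contains some circuit $\gamma$. Then $|\gamma| = \rho_{\sQ}(\gamma) + 1 \leq \rho_{\sQ}(\eta) + 1 = k+1$. Paving forces $|\gamma| \geq d$, hence $k \geq d-1$. Combined with $k \leq d$, the only possibilities are $k \in \{0, d-1, d\}$.

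For the reverse direction, assume every cyclic flat has rank in $\{0, d-1, d\}$, and let $\gamma$ be a circuit of $\sQ$. First I would verify that $\cl_{\sQ}(\gamma)$ is a cyclic flat: it is a flat by definition, and for any $e \in \cl_{\sQ}(\gamma)$ either $e \in \gamma$ (so $e$ lies in the circuit $\gamma \subset \cl_{\sQ}(\gamma)$), or $e \notin \gamma$, in which case $\gamma \cup e$ is dependent and $e$ is not a coloop of $\sQ_{|\gamma\cup e}$ (since $\rho_{\sQ}(\gamma \cup e) = \rho_{\sQ}(\gamma)$), so $e$ lies in some circuit contained in $\gamma \cup e \subset \cl_{\sQ}(\gamma)$. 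Hence $\cl_{\sQ}(\gamma)$ is a union of circuits, i.e., cyclic. By hypothesis, $\rho_{\sQ}(\cl_{\sQ}(\gamma)) = |\gamma|-1$ lies in $\{0, d-1, d\}$. The value $0$ would force $|\gamma|=1$, making $\gamma$ a loop, contradicting looplessness. Thus $|\gamma|-1 \in \{d-1, d\}$, giving $|\gamma| \geq d$, so $\sQ$ is paving.

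The only step with any subtlety is verifying that $\cl_{\sQ}(\gamma)$ is cyclic; everything else follows directly from the rank identity $|\gamma| = \rho_{\sQ}(\gamma)+1$ for a circuit. If a cleaner citation is preferred, one can invoke \cite[Proposition~2.1.6]{Oxley} or the standard fact that the cyclic flats are exactly the closures of unions of circuits. No obstacle of substance is anticipated.
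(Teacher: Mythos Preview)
Your proof is correct and follows essentially the same approach as the paper: both directions hinge on the facts that a nonempty cyclic flat contains a circuit, and that the closure of a circuit is a cyclic flat of rank $|\gamma|-1$. You simply spell out in more detail the verification that $\cl_{\sQ}(\gamma)$ is cyclic and the exclusion of the rank-$0$ case via looplessness, both of which the paper's proof leaves implicit.
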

\noindent So the set of proper nonempty  cyclic flats of $\sQ$ is in $\pZ^{1}(\sQ)$.

\begin{proof}[Proof of Proposition \ref{prop:pavingCyclicFlats}]
Let $\rho_{\sQ}$ be the rank function of $\sQ$. Suppose $\sQ$ is paving, and let $\eta\neq \emptyset$ be a cyclic flat.  Since $\eta$ is a union of circuits, there must be a circuit $\gamma\subset \eta$, and  $\rho_{\sQ}(\gamma)\geq d-1$ since $\sQ$ is paving.  So $\rho_{\sQ}(\eta) \geq d-1$. Conversely, suppose every cyclic flat of $\sQ$ has rank $0$, $d-1$, $d$, and let $\gamma$ be a circuit of $\sQ$ (necessarily, $\gamma\neq \emptyset$).  The closure of $\gamma$ is a cyclic flat, and hence   $\rho_{\sQ}(\gamma)\geq d-1$. This means that $\gamma$ has at least $d$ elements.
\end{proof}

The \textit{corank} vector $\sw(\sQ) \in (\R^{\binom{[n]}{d}})^{\vee}/\langle\mathbf{1}\rangle$ of $\sQ$ is
\begin{equation*}
    \sw(\sQ)_{\lambda} = d - \rho_{\sQ}(\lambda).
\end{equation*}
The goal for the rest of this subsection is to show that, when $\sQ$ is a connected and paving matroid,  the dual graph of the subdivision $\pQ(\sw(\sQ))$ is a star-shaped tree whose central node corresponds to $\sQ$. 

\begin{proposition}
\label{prop:pavingCorank}
    If $\sQ$ is a paving matroid of rank at least $2$, then the corank vector of $\sQ$ is
    \begin{align*}
        \sw(\sQ)_{\lambda} = 
        \begin{cases}
            0 & \text{ if } \lambda \in \pB(\sQ), \\
            1 & \text{ if } \lambda \in \binom{[n]}{d} \setminus \pB(\sQ).
        \end{cases}
    \end{align*}
\end{proposition}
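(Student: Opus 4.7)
The plan is to compute $\rho_{\sQ}(\lambda)$ directly in the two cases $\lambda\in \pB(\sQ)$ and $\lambda\in \binom{[n]}{d}\setminus \pB(\sQ)$, using the paving hypothesis only in the second case. The first case is immediate: if $\lambda$ is a basis then $\rho_{\sQ}(\lambda)=d$, so $\sw(\sQ)_{\lambda} = d - d = 0$.

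For the second case, suppose $\lambda\in \binom{[n]}{d}$ is not a basis. Then $\lambda$ is a $d$-element dependent set, so it must contain a circuit $\gamma\subset \lambda$. Since $\sQ$ is paving, every circuit has at least $d$ elements, so $|\gamma|\geq d$. But also $|\gamma|\leq |\lambda|=d$, forcing $\gamma=\lambda$. Thus $\lambda$ is itself a circuit of $\sQ$. By the definition of a circuit, every proper subset of $\lambda$ is independent, and in particular any $(d-1)$-element subset of $\lambda$ is an independent set of rank $d-1$. Since $\lambda$ itself is dependent, $\rho_{\sQ}(\lambda)\leq d-1$, and combining gives $\rho_{\sQ}(\lambda) = d-1$. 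Hence $\sw(\sQ)_{\lambda} = d - (d-1) = 1$.

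There is no real obstacle here; the whole statement reduces to the observation that, in a paving matroid of rank $d$, the non-basis $d$-element subsets are precisely the circuits of size $d$. The rank hypothesis $d\geq 2$ is only needed to ensure that $\sQ$ has no loops (so that the rank function behaves as expected on singletons, and so that the formula $\rho_{\sQ}(\lambda)=|\lambda|-1$ for a circuit $\lambda$ applies cleanly); it is not used in any subtle way. Consequently the proof will be only a few lines long, essentially a direct invocation of the definition of a paving matroid together with the definitions of circuit and rank function recalled in \S\ref{sec:matroids_basics}.
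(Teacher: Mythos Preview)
Your proof is correct and follows essentially the same approach as the paper: both argue that a non-basis $d$-subset in a paving matroid must itself be a circuit (since any circuit it contains has size at least $d$), whence its rank is $d-1$. The paper's version omits the trivial basis case and is slightly terser, but the argument is the same.
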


\noindent We thank the referee for the following simplification of our original proof. 
\begin{proof}[Proof of Proposition \ref{prop:pavingCorank}]
    If $\lambda \in \binom{[n]}{d} \setminus \pB(\sQ)$, then is dependent. In particular $\rho_{\sQ}(\lambda) \leq d-1$ and $\lambda$  contains a circuit. Since $\sQ$ is a paving matroid, $\lambda$ must be a circuit, so any subset with $d-1$ elements is independent. Therefore  $\rho_{\sQ}(\lambda) = d-1$, as required. 
\end{proof}

Next, we describe the facets of the matroid polytope of a connected and paving matroid. In terms of inequalities, the hypersimplex $\Delta(d,n)$ is
\begin{equation*}
    \Delta(d,n) = \{x\in \R^{n} \, : \, 0\leq x_i \leq 1, \; x_1 + \cdots + x_n = d\}.
\end{equation*}

\begin{proposition}
\label{prop:Ld1CyclicFlat}
    Suppose $\sQ$ is a paving and connected $(d,n)$--matroid. 
    The facets of $\Delta(\sQ)$ not contained in the boundary of $\Delta(d,n)$ are given by the inequalities  
    \begin{equation}
    \label{eq:DeltaQFacets}
     \sum_{i\in \eta} x_i \leq d-1 \hspace{15pt} \text{ for } \hspace{15pt} \eta \in \pZ^1(\sQ).
    \end{equation}
\end{proposition}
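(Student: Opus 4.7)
The plan is to identify, for each $\eta\in\pZ^1(\sQ)$, the face of $\Delta(\sQ)$ cut out by $\sum_{i\in\eta}x_i=d-1$ with a product of smaller matroid polytopes, verify it is a facet using the paving hypothesis, and finally rule out any other internal facets via a known description of facets of matroid polytopes.

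Setup. Since $\sQ$ is connected (and assuming $n\ge 2$), $\sQ$ is loopless and coloopless and $\dim\Delta(\sQ)=n-1$. The facets of $\Delta(d,n)$ are the slices $\{x_i=0\}\cap\Delta(d,n)$ and $\{x_i=1\}\cap\Delta(d,n)$, so a facet of $\Delta(\sQ)$ is internal precisely when it is contained in no such coordinate hyperplane. For $\eta\in\pZ^1(\sQ)$ and any $\lambda\in\pB(\sQ)$, $|\lambda\cap\eta|\le\rho_{\sQ}(\eta)=d-1$, so \eqref{eq:DeltaQFacets} is a valid inequality. The face $F_\eta$ where equality holds is the convex hull of those $\epsilon_\lambda$ with $|\lambda\cap\eta|=d-1$; such $\lambda$ are exactly the sets of the form $B_\eta\sqcup\{j\}$ with $B_\eta\in\pB(\sQ|_\eta)$ and $j\in[n]\setminus\eta$, and since $\eta$ is a flat, $\sQ/\eta=\sU_{1,n-|\eta|}$, so every such $j$ occurs. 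Under the splitting $\R^n\cong\R^{\eta}\oplus\R^{[n]\setminus\eta}$, this identifies $F_\eta$ with $\Delta(\sQ|_\eta)\times\Delta(\sQ/\eta)$.

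Dimension, which is where the paving hypothesis enters. The key step is to show $\sQ|_\eta$ is connected. Every circuit of $\sQ|_\eta$ is a circuit of $\sQ$ contained in $\eta$, hence has size $\ge d$ by the paving hypothesis; on the other hand any such circuit has size at most $\rho_{\sQ|_\eta}+1=d$. Thus every circuit of $\sQ|_\eta$ has size exactly $d$. Since $\eta$ is cyclic, $\sQ|_\eta$ is coloopless and in particular has at least one circuit. If $\sQ|_\eta=\sQ_1\oplus\sQ_2$ were a nontrivial direct sum, both summands would be loopless, coloopless, and of positive rank, so a circuit in (say) $\sQ_1$ would have size at most $\rho(\sQ_1)+1\le(d-2)+1=d-1$, contradicting size $d$. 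Hence $\sQ|_\eta$ is connected; combined with the connectedness of the rank-$1$ matroid $\sQ/\eta$, we get $\dim F_\eta=(|\eta|-1)+(n-|\eta|-1)=n-2$, so $F_\eta$ is a facet. It is internal because $\sQ|_\eta$ is loopless and coloopless and $|[n]\setminus\eta|\ge2$ (as $\sQ$ is coloopless, $|[n]\setminus\eta|=1$ would make the unique outside element a coloop of $\sQ$), so on $F_\eta$ no coordinate is forced to $0$ or $1$.

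Completeness. To see these are the only internal facets, I would invoke the Feichtner--Sturmfels description of facets of matroid polytopes: the facets of $\Delta(\sQ)$ are in bijection with the flacets of $\sQ$, that is, with cyclic flats $F$ such that both $\sQ|_F$ and $\sQ/F$ are connected, with the internal facets corresponding to proper nonempty flacets. By Proposition~\ref{prop:pavingCyclicFlats}, the proper nonempty cyclic flats of the paving matroid $\sQ$ are exactly the elements of $\pZ^1(\sQ)$, and the preceding paragraph shows each is in fact a flacet. The main obstacle in this argument is the connectedness of $\sQ|_\eta$, which, without the paving hypothesis, can genuinely fail and would invalidate the clean bijection between cyclic hyperplanes and internal facets.
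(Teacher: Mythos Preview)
Your proof is correct and follows essentially the same route as the paper: both invoke the Gelfand--Serganova (equivalently, flacet) description of the facets of a matroid polytope and reduce the proposition to showing that the subsets $\eta$ with $1<|\eta|<n-1$ for which $\sQ|_\eta$ and $\sQ/\eta$ are both connected are exactly the cyclic hyperplanes. The one substantive difference is in how you prove $\sQ|_\eta$ is connected for $\eta\in\pZ^1(\sQ)$: you bound circuit sizes directly (every circuit of $\sQ|_\eta$ has size exactly $d$, so a proper direct summand of positive rank would contain a circuit of size $\le d-1$), whereas the paper observes that a connected component of $\sQ|_\eta$ is itself a nonempty cyclic flat of $\sQ$ and hence equals $\eta$ by Proposition~\ref{prop:pavingCyclicFlats}. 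Your argument is a bit more self-contained at that step; the paper's is shorter once Proposition~\ref{prop:pavingCyclicFlats} is in hand. One small wording issue: the sentence ``the facets of $\Delta(\sQ)$ are in bijection with the flacets'' is not literally right, since the boundary facets $x_i=0$ and $x_i=1$ do not arise from cyclic flats; but the claim you actually use---that the \emph{internal} facets are in bijection with the proper nonempty cyclic flats having connected restriction and contraction---is correct and is exactly what the paper extracts from \cite{GelfandSerganova}.
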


\begin{proof}
Recall that a subset $\eta\subset [n]$ is nondegenerate, in the sense of \cite{GelfandSerganova}, if the restriction $\sQ_{|\eta}$ and contraction $\sQ/\eta$ are connected. 
Furthermore, the nondegenerate subsets of size $1$ and $n-1$ correspond to facets contained inside the boundary of $\Delta(d,n)$. 
By [ibid., Theorem 2], it suffices to show that the nondegenerate subsets $\eta$ of $\sQ$ such that $1<|\eta|<n-1$ are the rank $d-1$ cyclic flats $\pZ^1(\sQ)$.

Suppose $\eta\in \pZ^1(\sQ)$.  Since $\eta$ is a flat and has rank $d-1$, the contraction $\sQ/\eta$ is isomorphic to $\sU_{1, [n]\setminus \eta}$, which is connected. Now suppose $\eta'$ is a nonempty subset of $\eta$ such that $\sQ_{|\eta'}$ is a connected component of $\sQ_{|\eta}$. This means that $\eta'$ is a flat, and since $\sQ_{|\eta'}$ is connected, it has no coloops, and hence $\eta'$ is a cyclic flat of $\sQ$. By Proposition \ref{prop:pavingCyclicFlats}, $\eta' = \eta$, and so $\sQ_{|\eta}$ is connected.  

Conversely, suppose $\eta\subset [n]$ is nondegenerate, and $1<|\eta|<n-1$. Because $|\eta|<n-1$ and $\sQ/\eta$ is connected, it is loopless, and so $\eta$ is a flat. Since $\sQ_{|\eta}$ is connected, it has no coloops. So $\eta$ is a cyclic flat, and hence $\eta\in \pZ^1(\sQ)$ by Proposition \ref{prop:pavingCyclicFlats}.
\end{proof}

Given a subset $\eta \subset [n]$ of size at least $d$,  denote by $\Leaf{\eta}$ the $(d,n)$--matroid whose bases are
\begin{equation*}
    \pB(\Leaf{\eta}) = \{ \lambda \in \textstyle{\binom{[n]}{d}} \, : \, |\lambda \cap \eta| \geq d-1  \}. 
\end{equation*}
The matroid is obtained by first taking a free extension of $\sU_{d, \eta}$ by an element $a$ of $[n]\setminus \eta$, and then adding the remaining elements of $[n]\setminus \eta$ so that they are parallel to $a$.
The polytope of $\Leaf{\eta}$  is described in terms of inequalities as
\begin{equation*}
    \Delta(\Leaf{\eta}) = \{x \in \Delta(d,n) \, : \, \sum_{i\in [n]\setminus \eta} x_i \leq 1 \}.
\end{equation*}
in particular, $\Delta(\sQ)$ has only one facet not contained in the boundary of $\Delta(d,n)$.

\begin{proposition}
\label{prop:subdStar}
    Suppose $\sQ$ is a paving and connected $(d,n)$--matroid, and let $\sw$ be its corank vector. Then $\Gamma(\sw)$ is a star graph where the central node corresponds to $\sQ$ and its leaves correspond to $\Leaf{\eta}$ for $\eta \in \pZ^1(\sQ)$. 
\end{proposition}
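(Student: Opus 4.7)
My plan has three steps: identify the maximal cells of $\pQ(\sw)$ explicitly, verify they polyhedrally subdivide $\Delta(d,n)$, and read off adjacencies.

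By Proposition~\ref{prop:pavingCorank}, $\sw_\lambda$ equals $0$ on $\pB(\sQ)$ and $1$ elsewhere.  I would first confirm that $\Delta(\sQ)$ and each $\Delta(\Leaf{\eta})$ for $\eta\in\pZ^1(\sQ)$ are maximal cells of $\pQ(\sw)$ by exhibiting witnessing covectors: take $\sv=0$ for $\Delta(\sQ)$, and $\sv=-\sum_{i\in\eta}\epsilon_i^*$ for $\Delta(\Leaf{\eta})$.  For the latter, the minimum value $-(d-1)$ is attained both by $\lambda\subseteq\eta$ (with $\sw_\lambda=1$, value $-d+1$) and by $\lambda$ with $|\lambda\cap\eta|=d-1$ (with $\sw_\lambda=0$, since in a paving matroid every $(d-1)$-subset is independent, forcing every such $\lambda\not\subseteq\eta$ to be a basis of $\sQ$); together these are exactly $\pB(\Leaf{\eta})$.

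Second, I would show these cells partition $\Delta(d,n)$.  Coverage is immediate from Proposition~\ref{prop:Ld1CyclicFlat}, since any $x\in\Delta(d,n)\setminus\Delta(\sQ)$ violates $\sum_{i\in\eta}x_i\leq d-1$ for some $\eta\in\pZ^1(\sQ)$ and therefore lies in $\Delta(\Leaf{\eta})$.  For disjointness of the interiors of two leaves $\Delta(\Leaf{\eta_1})$ and $\Delta(\Leaf{\eta_2})$, I would exploit the bound $|\eta_1\cap\eta_2|\leq d-2$ in paving matroids: $\eta_1\cap\eta_2$ is a flat of rank at most $d-2$ and every subset of $[n]$ of size at most $d-1$ is independent in $\sQ$, which forces the size bound.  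Summing $\sum_{i\in\eta_j}x_i\geq d-1$ and using $\sum_i x_i=d$ together with $x_i\leq 1$ then forces $x_i=1$ on $\eta_1\cap\eta_2$, $x_i=0$ off $\eta_1\cup\eta_2$, and $\sum_{i\in\eta_j}x_i=d-1$, placing the intersection on a shared boundary of both leaves.

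Finally I would read off $\Gamma(\sw)$.  Each leaf $\Delta(\Leaf{\eta})$ shares with $\Delta(\sQ)$ the full $(n-2)$-dimensional facet $\Delta(d,n)\cap\{\sum_{i\in\eta}x_i=d-1\}$, contributing an edge from $v_\sQ$ to $v_{\Leaf{\eta}}$.  The intersection of two distinct leaves has dimension $|\eta_1\triangle\eta_2|-2\leq n-d$ (the intersection being nonempty forces $|\eta_1\cap\eta_2|=d-2$), which is strictly below $n-2$ when $d\geq 3$; in the $d=2$ case, saturating the bound would require $\eta_1\sqcup\eta_2=[n]$ and hence make $\sQ$ a direct sum, contradicting connectedness.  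Thus no two leaves share a facet and $\Gamma(\sw)$ is the claimed star.  I expect the main obstacle to be the polytope bookkeeping for two leaves, which simultaneously delivers both interior disjointness and the dimension bound excluding leaf-to-leaf adjacencies.
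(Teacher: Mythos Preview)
Your argument is correct and follows the paper's approach through the first two steps: the same covectors exhibit $\Delta(\sQ)$ and the $\Delta(\Leaf{\eta})$ as cells, and the covering argument via Proposition~\ref{prop:Ld1CyclicFlat} is identical. Where you diverge is in ruling out leaf--leaf adjacencies. The paper dispatches this in one line: it was already observed, just before the statement, that $\Delta(\Leaf{\eta})$ has exactly \emph{one} facet not contained in $\partial\Delta(d,n)$, namely the hyperplane $\sum_{i\in[n]\setminus\eta}x_i=1$; since that facet is the one shared with $\Delta(\sQ)$, there is simply no second interior facet available to share with another leaf. Your route instead computes the leaf--leaf intersection explicitly, extracts the constraint $|\eta_1\cap\eta_2|=d-2$, bounds the dimension by $n-d$, and handles $d=2$ separately via connectedness. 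This is valid and more self-contained, but considerably heavier; the paper's observation about the facet count of $\Delta(\Leaf{\eta})$ buys the conclusion for free. Note also that your interior-disjointness step is not strictly needed: once you know these polytopes arise as $\sU_{\sw}^{\sv}$ for varying $\sv$, they are cells of a common regular subdivision and their relative interiors are automatically disjoint; the real content of that computation is the dimension bound you reuse in the adjacency step.
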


\begin{proof}
First, we claim that the matroids of the maximal cells of $\pQ(\sw)$ are $\sQ$ and $\Leaf{\eta}$ for $\eta \in \pZ^1(\sQ)$.  Because
    \begin{equation*}
        \sQ = \sU_{\sw}^{0} 
        \hspace{20pt} \text{and} \hspace{20pt} \Leaf{\eta} = \sU_{\sw}^{\sv} \hspace{10pt}\text{(where $\sv = -\epsilon_{\eta}^{*}$ for $\eta \in \pZ^1(\sQ)$),}
    \end{equation*}
    we see that  $\sQ$ and $\Leaf{\eta}$ for $\eta \in \pZ^1(\sQ)$ are matroids of maximal cells of $\pQ(\sw)$ (they are maximal since these matroids are connected). Suppose $x \in \Delta(d,n) \setminus \Delta(\sQ)$. By the characterization of $\Delta(\sQ)$ in Formula \ref{eq:DeltaQFacets}, there is some $\eta \in \pZ^1(\sQ)$ such that 
    \begin{equation*}
        \sum_{i\in \eta} x_i > d-1, \hspace{20pt} \text{equivalently, } \hspace{20pt} \sum_{i\in [n]\setminus \eta} x_i < 1. 
    \end{equation*}
    From this, we see that $x\in \Delta(\Leaf{\eta})$, and so there are no other maximal cells of $\pQ(\sw)$. Next, we claim that the only edges of $\Gamma(\sw)$ are between $v_{\sQ}$ and $v_{\Leaf{\eta}}$. Indeed, $\Delta(\sQ)$ and $\Delta(\Leaf{\eta})$ (for $\eta \in \pZ^1(\sQ)$) intersect along a common facet whose matroid is
    \begin{equation*}
        \Leaf{\eta}' = \{\lambda \in \textstyle{\binom{[n]}{d}} \, : \, |\lambda \cap \eta| = d-1\}. 
    \end{equation*}
    Note that $\Leaf{\eta}' \cong \sU_{d-1, \eta} \oplus \sU_{1, [n]\setminus \eta}$. 
    Since $\Delta(\Leaf{\eta})$ has only one facet contained in the relative interior of $\Delta(d,n)$, these are the only edges of $\Gamma(\sw)$.      
\end{proof}

\subsection{Singular initial degenerations of the Grassmannian}
\label{sec:singularInitialDeg312}
Before describing singular initial degenerations of the open Grassmannian, we derive smoothness results on the inverse limit $\Gr(\sw)$ when $\sw$ is the corank vector of a paving and connected  matroid. 

\begin{proposition}
\label{prop:smoothSurjectiveQeta}
    For any $\eta\subset [n]$ with $d \leq |\eta| \leq n-1$, the face morphism  $\varphi_{\eta}:= \varphi_{\Leaf{\eta}, \Leaf{\eta}'}:\Gr(\Leaf{\eta};\C) \to \Gr(\Leaf{\eta}';\C)$ is smooth and surjective with connected fibers. 
\end{proposition}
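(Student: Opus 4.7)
The plan is to realize $\varphi_\eta$ as an open immersion into a trivial affine bundle over $\Gr(\Leaf{\eta}';\C)$ by directly comparing the coordinate rings from \S\ref{sec:coordrings_tsc}. After applying a permutation we may take $\eta \supseteq [d-1]$ and $[n]\setminus\eta = \{d, d+1, \ldots, d+m-1\}$ with $m = n - |\eta|$, so that $[d]$ is a common reference basis of $\Leaf{\eta}$ and $\Leaf{\eta}'$. Using the matrix $A = [I_d \mid X]$, a direct case analysis of when $([d]\setminus i) \cup (d+j) \in \pB$ shows that $B_{\Leaf{\eta}} = B_{\Leaf{\eta}'}[Y]$, where $Y = \{x_{d,j} : j \geq m\}$ has exactly $|\eta|-(d-1)$ new variables.

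Next I would verify $I_{\Leaf{\eta}} = I_{\Leaf{\eta}'} = (0)$. The non-bases of $\Leaf{\eta}$ are $d$-sets $\lambda$ with $|\lambda \cap ([n]\setminus \eta)| \geq 2$; in the image of $\pi_{\Leaf{\eta}}$ every column indexed by $[n]\setminus \eta$ is a scalar multiple of $e_d$, so any two such columns are parallel and $\pi_{\Leaf{\eta}}(A_\lambda)=0$. For $\Leaf{\eta}'$ the additional non-bases are $d$-subsets of $\eta$; under $\pi_{\Leaf{\eta}'}$ every $\eta$-column has zero $d$-th coordinate, so those determinants vanish as well.

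The key step is to analyze $U_{\Leaf{\eta}}$ modulo $U_{\Leaf{\eta}'}$. The extra generators are the minors $A_\lambda$ for $\lambda \in \binom{\eta}{d}$; writing $\lambda = ([d-1]\setminus I) \cup J$ with $J \subseteq \{d+m,\ldots,n\}$ and $|J| = |I|+1$, Laplace expansion first along the identity columns of $A$ and then along row $d$ of the resulting submatrix of $X$ yields
\[
A_\lambda \;=\; \pm \det X_{I\cup\{d\},\, J}
\;=\; \sum_{k\in J} \pm\, x_{d,\,k-d}\, \det X_{I,\, J\setminus\{k\}}.
\]
Each coefficient $\det X_{I,\, J\setminus\{k\}}$ agrees up to sign with $A_\mu$ for the basis $\mu = ([d-1]\setminus I) \cup (J\setminus\{k\}) \cup \{d\}$ of $\Leaf{\eta}'$, so it is a generator of $U_{\Leaf{\eta}'}$. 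Hence every extra generator of $U_{\Leaf{\eta}}$ is a linear form in $Y$ whose coefficients are units on $\Gr(\Leaf{\eta}';\C)$.

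Putting this together, $\varphi_\eta$ factors as an open immersion $\Gr(\Leaf{\eta};\C) \hookrightarrow \Gr(\Leaf{\eta}';\C) \times \mathbb{A}^{|Y|}_{\C}$ followed by projection onto the first factor; the open locus is cut out by the nonvanishing of the coordinate functions $x_{d,k-d}$ together with finitely many nonzero linear forms in $Y$ having unit coefficients. Over each closed point of $\Gr(\Leaf{\eta}';\C)$, the fiber is therefore the complement in $\mathbb{A}^{|Y|}_{\C}$ of a finite hyperplane arrangement, which is non-empty (since $\C$ is infinite), Zariski-open in affine space, and irreducible; this gives smoothness, surjectivity, and connected fibers simultaneously. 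The main obstacle is the combinatorial bookkeeping needed to identify the variables and prove $B_{\Leaf{\eta}} = B_{\Leaf{\eta}'}[Y]$ and the vanishing of the ideals; once these are established, the single Laplace expansion above completes the argument.
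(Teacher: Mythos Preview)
Your approach is essentially the same as the paper's: both choose a reference basis $[d]$ with $d\notin\eta$, observe that $B_{\Leaf{\eta}} = B_{\Leaf{\eta}'}[Y]$ with $I_{\Leaf{\eta}}=I_{\Leaf{\eta}'}=0$, and factor $\varphi_\eta$ as an open immersion into $\Gr(\Leaf{\eta}';\C)\times\mathbb{A}^{|Y|}$ followed by projection. Your Laplace-expansion analysis of the extra generators of $U_{\Leaf{\eta}}$ is in fact more explicit than the paper's treatment, which handles surjectivity by choosing the new coordinates algebraically independent over $\Q$; your observation that each extra generator is a linear form in $Y$ with coefficients that are units on $\Gr(\Leaf{\eta}';\C)$ yields surjectivity and connectedness of fibers simultaneously and cleanly.
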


\begin{proof}
For brevity, let $\sQ = \Leaf{\eta}$ and $\sQ' = \Leaf{\eta}'$.  Without loss of generality, suppose $\eta = [d+k]\setminus \{d\}$ (where $1\leq k \leq n-d$). Define index sets $\Lambda'$ and $\Lambda$ by
\begin{align*}
    &\Lambda' = \{(i,j) \, : \, i=1,2,\ldots,d-1;\, j=1,2,\ldots,k \} \cup \{(d,j) \, : \, j=k+1\ldots,n-d\}, \\
    &\Lambda = \Lambda' \cup \{(d,j) \, : \, j=1,\ldots,k\}.
\end{align*}

\noindent Let $A(x)$ be the matrix of variables
\begin{equation*}
        A(x) =\begin{bmatrix}
    1 &0 &\cdots &0 &x_{11} &\cdots &x_{1,k} & 0 & \cdots & 0 \\
    0 &1 &\cdots &0 &x_{21} &\cdots &x_{2,k} & 0 & \cdots & 0 \\
    \cdots &\cdots &\cdots &\cdots &\cdots &\cdots &\cdots & \cdots & \cdots & \cdots \\
    0 &0 &\cdots &1 &x_{d1} &\cdots &x_{d,k} & x_{d,k+1} &\cdots &x_{d,n-d}
    \end{bmatrix}.
    \end{equation*}
Then  $B_{\sQ'} = \C[x_{ij}\, : \, ij\in \Lambda']$ and $B_{\sQ} = \C[x_{ij}\, : \, ij\in \Lambda]$. The coordinate rings of $\Gr(\sQ';\C)$ and $\Gr(\sQ; \C)$ are $S_{\sQ'} = U_{\sQ'}^{-1}B_{\sQ'}$ and $S_{\sQ} = U_{\sQ}^{-1}B_{\sQ}$, respectively, and the morphism $\Gr(\sQ;\C) \to \Gr(\sQ';\C)$ is induced by the inclusion $(U_{\sQ'})^{-1}B_{\sQ}' \subset U_{\sQ}^{-1}B_{\sQ}$. In particular, these are integral affine schemes of finite-type over $\C$. To show that $\varphi_{\eta}: \Gr(\sQ;\C) \to \Gr(\sQ';\C)$ is surjective, it suffices to show that it is surjective at the level of closed points \cite[8.4.F]{VakilFOAG}. 

Let $V'$ be a $\C$-realization of $\sQ'$, i.e., $V'$ is the row span of the matrix $A(y)$ where  $y_{ij} = 0$ for $(i,j)\in \Lambda\setminus \Lambda'$ and $y_{ij}$ for $(i,j)\in \Lambda'$ are nonzero complex numbers such that $f(y_{ij}) \neq 0$ for $f\in U'$. Choose complex numbers $z_{ij}$ such that $z_{ij}$ for $(i,j)\in \Lambda\setminus \Lambda'$ are algebraically independent over $\Q$ and $z_{ij} = y_{ij}$ for $(i,j)\in\Lambda'$. Let $V$ be the row span of $A(z)$. Since the semigroup $U_{\sQ}$ are generated by finitely many polynomials with \textit{integer} coefficients, we see that $V$ is a realization of $\sQ$ and $\varphi(V) = V'$, as required. 
\end{proof}

\begin{proposition}
\label{prop:compareLimitToCenter}
Suppose $\sQ$ is a paving and connected $(d,n)$--matroid, and let $\sw$ be its corank vector. Then the morphism $\varphi_{\sQ}:\Gr(\sw) \to \Gr(\sQ;\C)$ is smooth and surjective with connected fibers. Furthermore,
    \begin{equation*}
        \dim \Gr(\sw) = \dim \Gr(\sQ;\C) + \sum_{\eta\in \pZ^1(\sQ)} (|\eta|-d+1).
    \end{equation*}
\end{proposition}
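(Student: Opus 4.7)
The plan is to exploit the star-shaped structure of $\Gamma(\sw)$ established in Proposition \ref{prop:subdStar} to realize $\Gr(\sw)$ as an explicit fiber product, and then transfer the smoothness and surjectivity from the ``leaf'' morphisms of Proposition \ref{prop:smoothSurjectiveQeta} by base change. By Proposition \ref{prop:subdStar}, the maximal cells of $\pQ(\sw)$ are $\Delta(\sQ)$ and $\Delta(\Leaf{\eta})$ for $\eta\in\pZ^1(\sQ)$, and each $\Delta(\Leaf{\eta})$ shares exactly one facet with $\Delta(\sQ)$, whose matroid is $\Leaf{\eta}'$. Applying \cite[Proposition~C.12]{CoreyGrassmannians} (to reduce the inverse limit from $\pQ(\sw)$ to $\Gamma(\sw)$) together with the discussion of \S\ref{sec:inverse_limits_tight_spans} identifies
\begin{equation*}
\Gr(\sw) \;\cong\; \Gr(\sQ;\C) \times_{X'} X, \qquad X = \prod_{\eta\in\pZ^1(\sQ)} \Gr(\Leaf{\eta};\C), \qquad X' = \prod_{\eta\in\pZ^1(\sQ)} \Gr(\Leaf{\eta}';\C),
\end{equation*}
with the second projection $\Gr(\sw)\to \Gr(\sQ;\C)$ being $\varphi_{\sQ}$.

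Next, I would combine the individual face morphisms $\varphi_\eta:\Gr(\Leaf{\eta};\C)\to \Gr(\Leaf{\eta}';\C)$ provided by Proposition \ref{prop:smoothSurjectiveQeta}. Each is smooth and surjective with connected fibers, and these three properties are stable under finite products, so the morphism $X\to X'$ enjoys the same properties (the fiber over a closed point of $X'$ is the product of the connected fibers of the $\varphi_\eta$'s, hence connected). Smoothness and surjectivity are stable under arbitrary base change, so pulling back along $\Gr(\sQ;\C)\to X'$ produces a smooth surjective morphism $\varphi_{\sQ}:\Gr(\sw)\to \Gr(\sQ;\C)$, as recorded in \cite[Proposition~7.3.3]{EGAIV}. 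For the fibers, a closed point $V\in \Gr(\sQ;\C)$ determines its image in $X'$ via $\varphi_{\sQ,\Leaf{\eta}'}$ on each factor, and the fiber of $\Gr(\sw)\to\Gr(\sQ;\C)$ over $V$ is canonically identified with the fiber of $X\to X'$ over that image, which is connected.

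For the dimension formula, since $\varphi_{\sQ}$ is smooth with equidimensional fibers (these fibers agree with the fibers of the equidimensional morphism $X\to X'$), it suffices to compute $\dim X - \dim X' = \sum_{\eta}(\dim \Gr(\Leaf{\eta};\C) - \dim \Gr(\Leaf{\eta}';\C))$. Using the matrix coordinates set up in the proof of Proposition \ref{prop:smoothSurjectiveQeta}, the variables of $B_{\Leaf{\eta}}$ and $B_{\Leaf{\eta}'}$ are indexed by $\Lambda$ and $\Lambda'$ respectively, with $|\Lambda|-|\Lambda'|=|\eta|-d+1$. A short verification shows that the coordinate rings are simply the localizations $U_{\Leaf{\eta}}^{-1}B_{\Leaf{\eta}}$ and $U_{\Leaf{\eta}'}^{-1}B_{\Leaf{\eta}'}$ with trivial ideal: since every element of $[n]\setminus\eta$ corresponds to a column proportional to $e_d$ in the parameterization matrix, every non-basis minor vanishes identically. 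Consequently $\dim\Gr(\Leaf{\eta};\C)=|\Lambda|$ and $\dim\Gr(\Leaf{\eta}';\C)=|\Lambda'|$, yielding the claimed formula.

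The main obstacle I anticipate is bookkeeping rather than conceptual: producing a clean fiber-product identification out of \cite[Proposition~C.12]{CoreyGrassmannians} for the star graph $\Gamma(\sw)$, and confirming that $I_{\Leaf{\eta}}$ and $I_{\Leaf{\eta}'}$ really are the zero ideal in the presentations above so that the dimensions are computed by counting variables. Once those are in place, connectedness of fibers and smoothness follow formally from Proposition \ref{prop:smoothSurjectiveQeta} and standard base-change properties.
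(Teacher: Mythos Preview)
Your proposal is correct and follows essentially the same approach as the paper: identify $\Gr(\sw)$ with the fiber product $\Gr(\sQ;\C)\times_{X'}X$ via the star-shaped structure of $\Gamma(\sw)$, apply Proposition~\ref{prop:smoothSurjectiveQeta} to the product morphism $X\to X'$, and base-change. The only cosmetic difference is in the dimension step: the paper records the explicit formulas $\dim\Gr(\Leaf{\eta};\C)=(d-1)|\eta|+n-d^2+d-1$ and $\dim\Gr(\Leaf{\eta}';\C)=(d-2)|\eta|+n-d^2+2d-2$ and invokes \cite[Proposition~A.6]{CoreyGrassmannians}, whereas you compute the difference directly as $|\Lambda|-|\Lambda'|=k=|\eta|-d+1$ from the index sets in the proof of Proposition~\ref{prop:smoothSurjectiveQeta}, which is arguably more self-contained.
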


\begin{proof}
    By Proposition \ref{prop:smoothSurjectiveQeta}, the morphism
    \begin{equation*}
        \prod_{\eta\in \pZ^1(\sQ)}\varphi_{\eta}: \prod_{\eta\in \pZ^1(\sQ)} \Gr(\Leaf{\eta};\C) \to \prod_{\eta\in \pZ^1(\sQ)} \Gr(\Leaf{\eta}';\C),
    \end{equation*}
    and therefore its base-change $\varphi_{\sQ}:\Gr(\sw) \to \Gr(\sQ)$, is smooth and surjective with connected fibers. Since
    \begin{align*}
        &\dim \Gr(\Leaf{\eta};\C) = (d-1)|\eta| + n - d^2 + d - 1 \\ 
        &\dim \Gr(\Leaf{\eta}';\C) = (d-2)|\eta| + n - d^2+2d-2, 
    \end{align*}
    the proposition follows from \cite[Proposition~A.6]{CoreyGrassmannians}. 
\end{proof}

Given a closed subset $X$ of a scheme $Y$, denote by $X_{\red}$ the reduced induced scheme structure on $X$.

\begin{proposition}
\label{prop:codim0}
    If $Y$ is an $n$-dimensional finite-type reduced affine scheme, and $X\subset Y$ is a closed subscheme that is pure of dimension $n$, then $X_{\red}$ is a union of irreducible components of $Y$.
\end{proposition}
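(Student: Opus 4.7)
The plan is to decompose $X_{\red}$ into its irreducible components and show, by a dimension argument, that each of these components coincides with an irreducible component of $Y$.

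First, I would observe that because $Y$ is reduced, the closed immersion $X \hookrightarrow Y$ factors through $X_{\red}$, so $X_{\red}$ is itself a closed subscheme of $Y$. Since $X_{\red}$ and $X$ share the same underlying topological space, $X_{\red}$ is also pure of dimension $n$. Being a Noetherian reduced scheme (as a closed subscheme of the finite-type affine $Y$), $X_{\red}$ has finitely many irreducible components $X_1,\ldots,X_k$, each of dimension $n$.

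Next, I would fix one such $X_i$. Because $X_i$ is an irreducible closed subset of $Y$, it is contained in some irreducible component $Y_j$ of $Y$. Since $\dim Y = n$, one has $\dim Y_j \leq n$; on the other hand $\dim Y_j \geq \dim X_i = n$. Hence $\dim X_i = \dim Y_j = n$. In the setting of finite-type schemes (in particular catenary Noetherian schemes), a proper irreducible closed subset of an irreducible scheme of finite dimension has strictly smaller dimension, so the equality $\dim X_i = \dim Y_j$ together with $X_i \subseteq Y_j$ forces $X_i = Y_j$.

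Therefore every irreducible component of $X_{\red}$ is an irreducible component of $Y$, and $X_{\red}=X_1\cup\cdots\cup X_k$ is a union of irreducible components of $Y$, as claimed. The only place where care is needed is the dimension-strictness step in the previous paragraph, which relies on $Y$ being finite-type (so its irreducible components are catenary and have well-behaved dimension); since this is a standard fact for finite-type schemes over a field, the argument is routine.
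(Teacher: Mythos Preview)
Your argument is correct and follows essentially the same route as the paper's proof: both pick an irreducible component of $X_{\red}$ (equivalently, a minimal prime $I'$ of $\sqrt{I}$), use purity to produce a length-$n$ chain above it, and conclude from $\dim Y=n$ that $I'$ is a minimal prime of $B$. The paper phrases this algebraically with chains of primes, whereas you phrase it topologically with irreducible closed subsets; note also that your appeal to catenarity is unnecessary, since a proper irreducible closed subset of a finite-dimensional integral scheme always has strictly smaller dimension by the same chain-length argument.
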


\begin{proof}
    Suppose $Y = \Spec(B)$ and $X = \Spec(B/I)$. Let $I'$ be a minimal prime of the radical of $I$. Since $\dim X = n$,  there is a length-$n$ chain of prime ideals $I' \subsetneq P_1 \subsetneq \cdots \subsetneq P_n$ in $B$. Since $\dim Y = n$, this means that $I'$ is a minimal prime of $B$, i.e., $\Spec(B/I')$ is an irreducible component of $Y$. 
\end{proof}

Let $d\geq 3$ and $n\geq d + 9$. For the remainder of this section, we denote by $\sQ$ the simple $(d,n)$--matroid obtained by taking $d-3$ free coextensions of $\sQ_{\sing}$ from \S\ref{sec:3-12}, and then $n-(d+9)$ free extensions. Denote by $\xi\subset [n]$ the subset of coextended elements. This is a connected matroid whose cyclic flats (other than $\emptyset$ and $[n]$) have rank $d-1$, and are 
\begin{equation*}
    \pZ^1(\sQ) = \{ \eta \cup \xi \, : \, \eta\in \pZ^1(\sQ_{\sing})  \}.
\end{equation*}
So $\sQ$ is paving by Proposition \ref{prop:pavingCyclicFlats}. 
We also denote by $\sw \in (\R^{\binom{n}{d}})^{\vee}$ the corank vector of $\sQ$. 
\begin{proposition}
\label{prop:singularIsom}
    The corank vector  $\sw$ lies in $\TGr^{\circ}(d,n;\C)$ and the closed immersion 
    \begin{equation*}
        \init_{\sw}\Gr^{\circ}(d,n;\C) \to \Gr(\sw)
    \end{equation*}
    is an isomorphism. 
\end{proposition}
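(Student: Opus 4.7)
The plan is to prove the isomorphism in four steps: (i) match the dimensions $\dim\init_{\sw}\Gr^{\circ}(d,n;\C)=\dim\Gr(\sw)=d(n-d)$; (ii) use Proposition~\ref{prop:codim0} to identify $(\init_{\sw}\Gr^{\circ}(d,n;\C))_{\red}$ as a union of irreducible components of $\Gr(\sw)$; (iii) lift one representative from each of the three irreducible components of $\pR(\sQ_{\sing};\C)$ via a Kapranov-style perturbation, producing a $\C$-point of $\init_{\sw}\Gr^{\circ}(d,n;\C)$ in each component of $\Gr(\sw)$; (iv) conclude using reducedness of $\Gr(\sw)$.

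For step (i), Proposition~\ref{prop:compareLimitToCenter} gives $\dim\Gr(\sw)=\dim\Gr(\sQ;\C)+\sum_{\eta\in\pZ^1(\sQ)}(|\eta|-d+1)$. Starting from $\dim\Gr(\sQ_{\sing};\C)=12$ (since $\dim\pR(\sQ_{\sing};\C)=1$ by Theorem~\ref{thm:singular3-12}), each of the $d-3$ free coextensions contributes $n'-d'=9$ and each of the $n-d-9$ free extensions contributes the current rank $d$, yielding $\dim\Gr(\sQ;\C)=d(n-d)-15$. The correction sum equals $\sum_{\eta\in\pZ^1(\sQ_{\sing})}(|\eta|-2)=3\cdot 2+9\cdot 1=15$ using the three size-$4$ and nine size-$3$ lines of \eqref{eq:singularMatroid}. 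Hence $\dim\Gr(\sw)=d(n-d)$. Since $\Gr^{\circ}(d,n;\C)$ is irreducible of dimension $d(n-d)$, its initial degeneration $\init_{\sw}\Gr^{\circ}(d,n;\C)$ is equidimensional of the same dimension (by flatness of Gr\"obner degenerations); in particular $\sw\in\TGr^{\circ}(d,n)$. For step (ii), $\Gr(\sw)$ is reduced because the decomposition $\Gr(\sw)\cong\Gr(\sQ;\C)\times_{X'}X$ from the discussion preceding Proposition~\ref{prop:smoothSurjectiveQeta} exhibits $\Gr(\sw)\to\Gr(\sQ;\C)$ as a smooth pullback of reduced matroid strata. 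Proposition~\ref{prop:codim0} then yields that $(\init_{\sw}\Gr^{\circ}(d,n;\C))_{\red}$ is a union of irreducible components of $\Gr(\sw)$.

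For step (iii), the smooth surjective morphism $\Gr(\sw)\to\Gr(\sQ;\C)$ with connected fibers induces a bijection of irreducible components, and $\Gr(\sQ;\C)$ has three components inherited from those of $\pR(\sQ_{\sing};\C)$. For each such component, fix a representative realization matrix $A$ of $\sQ$, choose a generic $d\times n$ matrix $M$ over $\C$, and set $A(t)=A+tM$ regarded as a matrix over $\CCt$. For $\lambda\in\pB(\sQ)$, the minor $\det((A+tM)_{\lambda})$ has nonzero constant term $\det(A_{\lambda})$, so its valuation is $0$. For $\lambda\notin\pB(\sQ)$, the paving condition on $\sQ$ forces $\rho_{\sQ}(\lambda)=d-1$, so the $t$-linear coefficient of $\det((A+tM)_{\lambda})$ is generically nonzero in $M$ (one may replace a single column of $A_{\lambda}$ by a generic vector to produce a nonzero determinant), giving valuation $1$. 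By Proposition~\ref{prop:pavingCorank}, $\val(p(A(t)))=\sw$, so $A(t)\in\Gr^{\circ}(d,n;\CCt)$ and its associated $\C$-point of $\init_{\sw}\Gr^{\circ}(d,n;\C)$ maps under $\psi_{\sw}$ to a point of $\Gr(\sw)$ whose projection to $\Gr(\sQ;\C)$ is the row span of $A$, lying in the prescribed component. Step (iv) now follows: by step (ii) any component of $\Gr(\sw)$ meeting the image of $\psi_{\sw}$ is contained in it, and by step (iii) every component meets the image; hence $\psi_{\sw}$ is set-theoretically surjective, and since $\Gr(\sw)$ is reduced, the closed immersion $\psi_{\sw}$ is an isomorphism.

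The main obstacle will be justifying the general facts invoked, especially the equidimensionality of $\init_{\sw}\Gr^{\circ}(d,n;\C)$ (requiring a careful application of Gr\"obner degeneration theory for this specific $\sw$) and the reducedness of $\Gr(\sw)$ (which uses the smooth-pullback description over $\Gr(\sQ;\C)$). The Kapranov perturbation in step (iii) is then routine once the paving hypothesis on $\sQ$ is exploited to pin down the valuations.
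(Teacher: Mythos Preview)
Your proposal is correct and follows essentially the same strategy as the paper: match dimensions, establish reducedness of $\Gr(\sw)$ via smoothness over $\Gr(\sQ;\C)$, produce a point of $\init_{\sw}\Gr^{\circ}(d,n;\C)$ in each of the three irreducible components of $\Gr(\sw)$ by perturbing a chosen realization of $\sQ$, and conclude via Proposition~\ref{prop:codim0}. The paper differs only cosmetically---it uses an explicit perturbation matrix $B_t$ (verifiable by hand or computer) in place of your generic $M$, and computes $\dim\Gr(\sw)$ via the fiber of $\Gr(\sw)\to\pR(\sQ_{\sing};\C)$ rather than directly from Proposition~\ref{prop:compareLimitToCenter}---though it is careful to choose the three points off the pairwise intersections of components, a detail you should make explicit in step~(iv) for the union-of-components argument to go through.
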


Suppose $X\subset \G_m^{a}$ is a closed subvariety and $K=\CCt$. Given  $\sfp \in X(K)$, recall that the \textit{tropicalization} $\trop(\sfp)$ is the coordinatewise valuation of $\sfp$. The \textit{exploded tropicalization} $\ETrop(\sfp)$ is the vector whose coordinates are the coefficients of the leading terms. By \cite[Lemma~3.2]{Payne}, $\ETrop(\sfp) \in \init_{\sw} X$ where $\sw=\trop(\sfp)$.  

\begin{proof}[Proof of Proposition \ref{prop:singularIsom}]
As in the proof of Theorem \ref{thm:singular3-12}, denote by $X_1 = V(xy+x-2y)$ and $X_{2} = V(y-\omega)$, and $X_{3} = V(y-\overline{\omega})$ the three irreducible components of $\pR(\sQ_{\sing},\C)$. Let $A_{1}$, $A_{2}$ and $A_{3}$ be the matrices obtained by evaluating the matrix $A$ from \eqref{eq:representationMatrix312} at  $(x,y)=(3,-3)$, $(3, \omega)$, and $(3,\overline{\omega})$, respectively. These matrices correspond to points of $X_1\setminus (X_2\cap X_3)$,  $X_2\setminus (X_1\cap X_3)$, and $X_{3}\setminus (X_{1}\cap X_{2})$, respectively.  Define
    \begin{equation*}
    B_t =\begin{bmatrix}
    t & 0 & 2t & -t & t & 0 & -t & t & 0 & t & t & t \\
    -t & 0 & t & t & t & 0 & 3t & 0 & t & t & t & -t \\
    0 & 0 & -t & t & 0 & t & -t & -t & t & t & t & 0
    \end{bmatrix}.
    \end{equation*}
Set $C_{i} = A_{i}+B_t$  for $i=1,2,3$. Let $D'$ be a $(d-3)\times 9$ matrix, $D = [0|D']$ a $(d-3)\times 12$ matrix and $E$ a $d\times (n-d-9)$ matrix, so that the combined entries of $D'$ and $E$ are complex numbers algebraically independent over $\Q$. Finally, for $i=1,2,3$, set
\begin{equation*}
F_{i} = \left[
\begin{array}{c | c} 
  \begin{array}{c c} 
     C_{i} & 0 \\ 
     D & I_{d-3} 
  \end{array} & E \\ 
 \end{array} \right]. \hspace{20pt}
\end{equation*}
Denote by $\sfp_i(t)$ the Pl\"ucker vector of $F_{i}$ for $i=1,2,3$. Since the elements in $D$ and $E$ are algebraically independent, we have that $\val_t(\sfp_i(t)_{\lambda}) = 0$ for all $\lambda\in \binom{[n]}{d}$ such that $\lambda \not\subset \{1,2,\ldots,12\}$.  Thus, it is a routine verification to show that $\trop(\sfp_i(t)) = \sw$ for $i=1,2,3$; in particular, this implies $\sw \in \TGr^{\circ}(d,n;\C)$.  

Denote by $\varphi_{\sQ}:\Gr(\sw) \to \Gr(\sQ;\C)$ the structural morphism from the inverse limit and  $\pi:\Gr(\sw) \to \pR(\sQ_{\sing};\C)$ the composition
\begin{equation*}
    \Gr(\sw) \xrightarrow{\varphi_{\sQ}} \Gr(\sQ;\C) \to \Gr(\sQ_{\sing};\C) \to \pR(\sQ_{\sing};\C).
\end{equation*}
The morphism $\pi$ is smooth and surjective with connected fibers by Propositions \ref{prop:principalExtension}, \ref{prop:coextension} and \ref{prop:compareLimitToCenter}. 
So $\Gr(\sw)$ is a reduced affine scheme with three irreducible components. By \cite[Proposition~III.9.5]{Hartshorne}, its dimension is the fiber dimension of $\pi$, which is $d(n-d)-1$ by Proposition \ref{prop:compareLimitToCenter}, plus with $\dim \pR(\sQ_{\sing};\C)$, which is 1. So $\dim \Gr(\sw) = d(n-d)$.

Set  $\sa_i = \ETrop(\sfp_i)$ for $i=1,2,3$; these points are in $\init_{\sw}\Gr^{\circ}(d,n)$ as discussed above. Since $(\sa_i)_{\lambda} = \sfp_{i}(0)_{\lambda}$ for $i=1,2,3$ and $\lambda \in \pB(\sQ_{\sing})$, we have  $(\pi\circ\psi_{\sw})(\sa_1) \in X_1\setminus (X_2\cap X_3)$, $(\pi\circ\psi_{\sw})(\sa_2) \in X_2 \setminus (X_1\cap X_3)$ and $(\pi\circ\psi_{\sw})(\sa_3) \in X_3 \setminus (X_1\cap X_2)$ where $X_{1}$, $X_{2}$, and $X_{3}$ are the 3 irreducible components of $\pR(\sQ_{\sing};\C)$ described above. So the image of $\psi_{\sw}$ meets the 3 irreducible components of $\Gr(\sw)$ outside of their intersection. Being a flat degeneration of $\Gr^{\circ}(d,n)$, we have that the reduction of $\init_{\sw}\Gr^{\circ}(d,n)$ is pure of dimension $d(n-d)$. So we have closed immersions 
\begin{equation*}
    (\init_{\sw}\Gr^{\circ}(d,n))_{\red} \hookrightarrow  \init_{\sw}\Gr^{\circ}(d,n)\xrightarrow{\psi_{\sw}} \Gr(\sw).
\end{equation*}
The composition is an isomorphism by Proposition \ref{prop:codim0} and the fact that the image of $\psi_{\sw}$ meets all irreducible components of $\Gr(\sw)$ (outside of pairwise intersections). We conclude that $\psi_{\sw}$ is an isomorphism. 
\end{proof}

Thorem \ref{thm:not-shon-geq-12} can be formulated as follows.

\begin{theorem}
\label{thm:notSchoenPairs}
    Suppose $3 \leq d \leq \frac{n}{2}$ and $(d,n)$  is not any of the pairs in \eqref{eq:openPairs}.   Then $\Gr^{\circ}(d,n)$ and $\pR(d,n)$ are not sch\"on.  
\end{theorem}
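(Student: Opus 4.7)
The plan is to combine all the machinery already developed in \S\ref{sec:initial_degenerations}--\S\ref{sec:singularInitialDeg312} into a short direct argument, using the matroid $\sQ_{d,n,\sing}$ from Theorem \ref{thm:singular3-n} as the source of the singularity. Concretely, given a pair $(d,n)$ with $3\leq d\leq n/2$ not listed in \eqref{eq:openPairs}, the constraints imply $n\geq d+9$, so the matroid $\sQ:=\sQ_{d,n,\sing}$ obtained from $\sQ_{\sing}$ by $d-3$ free coextensions followed by $n-(d+9)$ free extensions is well defined. I would take $\sw=\sw(\sQ)\in (\R^{\binom{[n]}{d}})^{\vee}/\Zone$ to be its corank vector and show that $\init_{\sw}\Gr^{\circ}(d,n;\C)$ is singular; this exhibits $\Gr^{\circ}(d,n;\C)$ as non-sch\"on, and since $\pR(d,n)$ differs from $\Gr^{\circ}(d,n;\C)$ by a torus factor, the same conclusion transfers to $\pR(d,n)$.

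First I would verify that $\sQ$ falls into the hypothesis of Proposition \ref{prop:compareLimitToCenter}. The matroid $\sQ$ is connected and its only proper nonempty cyclic flats are of the form $\eta\cup\xi$ with $\eta\in\pZ^{1}(\sQ_{\sing})$, where $\xi$ denotes the set of coextended elements; each has rank $d-1$, so Proposition \ref{prop:pavingCyclicFlats} shows $\sQ$ is paving. Then Proposition \ref{prop:subdStar} guarantees that $\Gamma(\sw)$ is the star tree centered at $v_{\sQ}$ with leaves $v_{\Leaf{\eta}}$ for $\eta\in\pZ^{1}(\sQ)$, and Proposition \ref{prop:compareLimitToCenter} provides a smooth surjection $\Gr(\sw)\to\Gr(\sQ;\C)$ with connected fibers. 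Singularity type is preserved by smooth surjections, so $\Gr(\sw)$ is singular iff $\Gr(\sQ;\C)$ is.

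Next I would trace the singularity down from $\Gr(\sQ;\C)$ to $\pR(\sQ_{\sing};\C)$. By iterating Propositions \ref{prop:principalExtension} and \ref{prop:coextension}, the composition
\[
\Gr(\sQ;\C)\longrightarrow \Gr(\sQ_{\sing};\C)
\]
is smooth and surjective, and the Gelfand--MacPherson isomorphism \eqref{eq:gelfand-macpherson} gives $\Gr(\sQ_{\sing};\C)\to \pR(\sQ_{\sing};\C)$ as a torus-bundle. Since $\pR(\sQ_{\sing};\C)$ has nodal singularities by Theorem \ref{thm:singular3-12}, the same holds, up to smooth morphism, for $\Gr(\sQ;\C)$ and hence for $\Gr(\sw)$. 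Finally, Proposition \ref{prop:singularIsom} (applied to the extended/coextended matroid $\sQ$, not just to $\sQ_{\sing}$) identifies $\init_{\sw}\Gr^{\circ}(d,n;\C)$ with $\Gr(\sw)$ as schemes, completing the argument.

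The main obstacle is the last step: Proposition \ref{prop:singularIsom} is stated for the specific matroid $\sQ_{d,n,\sing}$, and one must check that its proof goes through with the extra coextensions and extensions. The delicate point is constructing lifts $\sfp_1(t),\sfp_2(t),\sfp_3(t)\in\Gr^{\circ}(d,n;\CCt)$ with $\trop(\sfp_i)=\sw$ whose exploded tropicalizations $\ETrop(\sfp_i)$ meet the three distinct irreducible components of $\Gr(\sw)$ outside their pairwise intersections. I would do this by block-extending the matrices $A_i+B_t$ from \eqref{eq:representationMatrix312} as in the proof of Proposition \ref{prop:singularIsom}, padding with a generic $(d{-}3)\times 9$ block $D$ realizing the free coextension and a $d\times(n{-}d{-}9)$ block $E$ of algebraically independent constants realizing the free extensions. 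The key verifications are that $\val_t$ vanishes on every Pl\"ucker coordinate whose index is not contained in $\{1,\dots,12\}$ (forcing $\trop(\sfp_i)=\sw$) and that the three resulting points land in distinct irreducible components; once these are in hand, purity of dimension of $\init_{\sw}\Gr^{\circ}(d,n;\C)$ together with Proposition \ref{prop:codim0} forces the closed immersion $\psi_{\sw}$ to be an isomorphism, and the theorem follows.
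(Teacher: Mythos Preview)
Your proposal is correct and follows essentially the same route as the paper's proof: take $\sw$ to be the corank vector of $\sQ_{d,n,\sing}$, use the chain of smooth surjective morphisms $\Gr(\sw)\to\Gr(\sQ;\C)\to\pR(\sQ_{\sing};\C)$ to transfer the singularity, and invoke Proposition~\ref{prop:singularIsom} to identify $\init_{\sw}\Gr^{\circ}(d,n;\C)$ with $\Gr(\sw)$. Your final paragraph's ``main obstacle'' is not an obstacle at all---Proposition~\ref{prop:singularIsom} is already stated and proved in the paper for the general matroid $\sQ_{d,n,\sing}$ (any $d\ge 3$, $n\ge d+9$), so there is nothing further to check.
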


\begin{proof}[Proof of Theorem \ref{thm:notSchoenPairs}] Let $\sQ$ and $\sw$ be as in Proposition \ref{prop:singularIsom}. The inverse limit $\Gr(\sw)$ is not smooth since there is a smooth and surjective morphism $\Gr(\sw) \to \pR(\sQ_{\sing};\C)$ and $\pR(\sQ_{\sing};\C)$ is singular by Theorem \ref{thm:singular3-12}. Also by Proposition \ref{prop:singularIsom}, the initial degeneration $\init_{\sw}\Gr^{\circ}(d,n)$ is isomorphic to $\Gr(\sw)$, and hence also singular. 
\end{proof}

\section{Concluding remarks}

We conclude with some potential directions for future work. Our examination of realization spaces of matroids did not completely exhaust available data. That is, there are still $(3,12)$--matroids that have gone unstudied, and the data set of $(4,10)$--matroids has not been examined at all. Of course, as more data becomes available, it would be natural to apply our methods to $(d,n)$--matroids for the pairs $(d,n)$ in \eqref{eq:openPairs} that we have not yet considered. 

It is also natural to ask what conditions are sufficient or necessary to conclude that the realization space of a matroid is singular. Furthermore, for a given pair $(d,n)$, what singularity types can appear?  While Mn\"ev universality guarantees all singularity types appear on \emph{some} realization space, are there obstructions that prevent certain singularity types from appearing for a particular $(d,n)$ pair? 

Finally, we revisit Question \ref{question:3-n range} from the introduction. Theorem \ref{thm:not-shon-geq-12} leaves the problem of determining whether $\Gr^{\circ}(3,n)$ is sch\"on open only for $n=9,10,11$.  Some challenges to resolving this are immediately apparent. For this range of $n$, all matroid strata are smooth by Propositions \ref{prop:3-9smooth} and \ref{prop:3-10-11smooth}, so the technique used to construct the singular initial degeneration in Proposition \ref{prop:singularIsom} fails. By \cite[Example 8.2]{CoreyGrassmannians}, for $n=9$ there exist initial degenerations such that the closed immersion of Theorem \ref{thm:closed_immersion} is not an isomorphism. Thus an exhaustive study of inverse limits over dual graphs for matroidal subdivisions of hypersimplices (as in \cite{CoreyGrassmannians} and \cite{CoreyLuber}), which is already a complex task, is insufficient to prove sch\"onness.


\end{document}